\newtheorem{theorem:definition}{Definition}
\DeclareMathOperator*{\argmin}{arg\,min}
\newcommand{\uproman}[1]{\uppercase\expandafter{\romannumeral#1}}
\definecolor{CITE}{HTML}{0571B0}
\tikzset{invisible/.style={minimum width=0mm,inner sep=0mm,outer sep=0mm}}
\definecolor{A}{gray}{0.95}
\definecolor{B}{gray}{0.85}
\definecolor{C}{gray}{0.75}
\definecolor{D}{gray}{0.65}
\definecolor{E}{gray}{0.55}
\definecolor{F}{gray}{0.45}
\definecolor{G}{gray}{0.35}
\definecolor{H}{gray}{0.25}
\definecolor{I}{gray}{0.15}
\def\checkmark{\tikz\fill[scale=0.4](0,.35) -- (.25,0) -- (1,.7) -- (.25,.15) -- cycle;}
\theoremstyle{definition}
\newtheorem{corollary}{Corollary}
\newtheorem{lemma}{Lemma}
\newtheorem{proposition}{Proposition}
\newtheorem{definition}{Definition}
\newtheorem{problem}{Problem}
\def\ps@pprintTitle{%
  \let\@oddhead\@empty
  \let\@evenhead\@empty
  \def\@oddfoot{\reset@font\hfil\thepage\hfil}
  \let\@evenfoot\@oddfoot
}
\journal{Computers \& Operations Research}
\newcommand{\TabBenchmarkGonazelezUniformFull}[1]{
\begin{table}[]
\small
\resizebox{\textwidth}{!}{%
\centering
  

}
\caption{Results of the comparison of \ac{MD-SPP-H} and \ac{EP-All} and \ac{SPP-All} for various customers ranging from 50 to 250 and uniform (U), single-center (SC), and double-center (DC) customer distributions. The values reported are the averages over ten instances. The bold values in the $\Delta$ columns represent the averages for each customer distribution.} 
\label{tab:TabBenchmarkAgatzKundu}
\end{table}
}
\begin{document}
\begin{frontmatter}



\title{The Flying Sidekick Traveling Salesman Problem with Multiple Drops: A Simple and Effective Heuristic Approach}


\author[ETH-Address]{Sarah K. Schaumann}
\author[PUCV-Address,TTU-Address]{Abhishake Kundu}
\author[MIT-Address]{Juan C. Pina-Pardo\corref{mycorrespondingauthor}}
\cortext[mycorrespondingauthor]{Corresponding author}
\ead{juanpina@mit.edu}
\author[MIT-Address]{Matthias Winkenbach}
\author[PUCV-Address]{Ricardo A. Gatica}
\author[ETH-Address]{Stephan M. Wagner}
\author[TTU-Address]{Timothy I. Matis}

\address[ETH-Address]{ETH Zurich, Chair of Logistics Management, Zurich, Switzerland}
\address[PUCV-Address]{Pontificia Universidad Católica de Valparaíso, Escuela de Ingeniería Industrial, Valparaíso, Chile}
\address[TTU-Address]{Texas Tech University, Department of Industrial, Manufacturing and Systems Engineering, Lubbock, USA}
\address[MIT-Address]{Massachusetts Institute of Technology, Center for Transportation and Logistics, Cambridge, USA}

\begin{abstract}
We study the \ac{FSTSP-MD}, a multi-modal last-mile delivery model where a single truck and a single drone cooperatively deliver customer packages. In the \ac{FSTSP-MD}, the drone can be launched from the truck to deliver multiple packages before it returns to the truck for a new delivery operation. The objective is to find the synchronized truck and drone delivery routes that minimize the completion time of the delivery process. We develop a simple and effective heuristic to solve the \ac{FSTSP-MD} based on an order-first, split-second scheme. The core component of our heuristic is a novel split algorithm that finds \ac{FSTSP-MD} solutions in polynomial time for a given sequence of customers. We embed this split algorithm into a simple heuristic approach that combines standard local search and diversification techniques. The simplicity of our heuristic does not sacrifice performance: we show that it consistently outperforms state-of-the-art solution approaches developed for both the \ac{FSTSP-MD} and the FSTSP (i.e., the single-drop case) through extensive numerical experiments. Based on both stylized and real-world instances, we also show that the \ac{FSTSP-MD} substantially reduces completion times compared to traditional truck-only delivery systems. We provide extensive managerial insights into the impacts of drone capabilities and customer distribution on delivery efficiency. Our discussion compares the benefits of drones with greater payload capacity and those with greater speed. We highlight which service area characteristics increase savings but also require enhanced drone capabilities.
\end{abstract}

\begin{keyword}
vehicle routing
\sep drone logistics
\sep shortest path
\sep last-mile delivery
\sep flying sidekick

\end{keyword}
\end{frontmatter}
\section{Introduction} \label{sec:introduction}

The global last-mile delivery market is expected to grow to over 200 billion U.S. dollars by 2027, marking a significant increase from 108 billion U.S. dollars in 2020 \citep{Statista2023Global2020-2027}. This growth is primarily driven by the increased e-commerce demand \citep{Samet2023EcommerceOff}. At the same time, customer expectations for fast delivery are becoming increasingly demanding. For instance, \cite{SupplyChainDive2023ConsumersToday} finds that nearly half of online consumers abandon their shopping carts if delivery times are too long or unspecified.

Meeting highly demanding consumer expectations for speedy delivery requires significant changes in technology and processes \citep{SupplyChainDive2023ConsumersToday}. Consequently, leading logistics service providers (e.g., Amazon and UPS) are investing in new technologies, such as drones, to streamline and expedite their last-mile delivery processes \citep{Vasani2023FAAFlights, Chen2023AmazonDrone}. \cite{Cornell2023Commercial2023} find that drone deliveries have increased by more than 80\% between 2021 and 2022 (equivalent to about 875,000 deliveries worldwide), with an estimated 500,000 commercial deliveries in the first half of 2023.

Cooperative delivery systems between conventional ground vehicles (such as classical trucks) and aerial cargo drones have gained increasing attention recently \citep{Moshref-Javadi2021ApplicationsReview}. The concept involves using trucks as mobile stations for one or multiple drones (see, e.g., \cite{Etherington2017Mercedes-BenzZurich}). The trucks deliver packages independently and, whenever possible and cost-effective, load the drones with packages for autonomous delivery to customers, and then meet the drones again after delivery and before the drone flight endurance is exceeded \citep{Roberti2021ExactDrone}. 

In the academic literature, the combination of trucks and drones for last-mile logistics has been predominantly investigated under the assumption that drones can only make a single delivery per sortie \citep{Moshref-Javadi2021ApplicationsReview, Dukkanci2024FacilityReview}. However, with recent breakthroughs in drone technology, this limitation is evolving. Drones can now make multiple deliveries in a single sortie, as long as they adhere to endurance and payload constraints \citep{Poikonen2020Multi-visitProblem}. An example is the Wingcopter 198 manufactured by \cite{Wingcopter2023Wingcopter198}, which can make up to three separate deliveries to multiple locations during a single sortie. 

This paper studies the \acf{FSTSP-MD}, a last-mile delivery model where a single truck and drone cooperatively deliver customer packages. In this model, which we describe in further detail in Section \ref{sec:problem_definition}, the drone can be launched from the truck to deliver multiple packages before it returns to the truck for a new delivery operation. The objective is to determine the truck and drone delivery assignments and their corresponding routes that minimize the completion time of the delivery process, defined as the time when the last vehicle returns to the depot.

We develop the \ac{MD-SPP-H}, a simple and effective heuristic approach for the \ac{FSTSP-MD} based on an order-first, split-second scheme \citep{Prins2014Order-firstReview}. \ac{MD-SPP-H} is conceptually simple, easy to implement, and achieves state-of-the-art results. It combines standard local search and diversification techniques with a novel split algorithm that finds \ac{FSTSP-MD} solutions in polynomial time for a given sequence of customers. Unlike state-of-the-art heuristics developed for the \ac{FSTSP-MD}, \ac{MD-SPP-H} allows users to define the maximum number of drops the drone can perform in a single delivery operation (or sortie), a practical constraint for existing commercial cargo drones such as the Wingcopter 198 \citep{Alamalhodaei2021WingcopterSky}. Notably, \ac{MD-SPP-H} provides the flexibility to be used for other combinations of vehicles, such as a truck paired with a motorcycle or a cargo bike. For instances of up to 250 customers, we show that \ac{MD-SPP-H} consistently outperforms state-of-the-art heuristics developed for the \ac{FSTSP} (where the drone can only deliver a single package per sortie) and the \ac{FSTSP-MD}. Finally, we investigate the effects of drone capacity, drone speed, drone flight endurance, and customer distribution using both stylized and real-world instances.

The remainder of the paper is structured as follows. In Section \ref{sec:literature}, we review the extant literature and present our main contributions. We then formally define the \ac{FSTSP-MD} in Section \ref{sec:problem_definition}. Section \ref{sec:methodology} describes the \ac{MD-SPP-H}. Section \ref{sec:heuristic_performance} presents extensive computational results to show the performance of \ac{MD-SPP-H} compared to state-of-the-art heuristics. In Section \ref{sec:managerial_insights}, we examine how the performance of cooperative truck-and-drone delivery systems depends on input parameters and use cases. Section \ref{sec:practice} discusses implications for practice and policymakers. Finally, we report our conclusions in Section \ref{sec:conclusion}.

\section{Literature Review} \label{sec:literature}

To the best of our knowledge, \cite{Murray2015TheDelivery} were the first to propose the combination of trucks and drones for last-mile logistics. The authors introduce the \ac{FSTSP}, where a single truck is supported by a single drone restricted to perform a single delivery in each sortie. Subsequently, \cite{Agatz2018OptimizationDrone} study an extension of the \ac{FSTSP} where the truck is allowed to wait at the launch location or revisit a customer location to retrieve the drone. \cite{Agatz2018OptimizationDrone} propose two order-first, split-second heuristic approaches based on local search and dynamic programming.

Since the introduction of the \ac{FSTSP}, several extensions have been proposed in the literature. As highlighted in several recent surveys such as \cite{Macrina2020Drone-aidedReview}, \cite{Moshref-Javadi2021ApplicationsReview}, and \cite{Dukkanci2024FacilityReview}, the most prominent extensions emerge from either (i) considering multiple drones, (ii) increasing the number of truck-and-drone tandems, (iii) augmenting the capability of drones (most prominently allowing multiple drops), or a combination of them. 
All these collaborative truck-and-drone systems are characterized by a high level of synchronization between the truck and drone routes \citep{Moshref-Javadi2021ApplicationsReview}. 
While beyond the scope of this review, it is worth mentioning that there are other delivery systems where either the truck or the drone serves as the primary delivery vehicle, while the other only supports the delivery process. 
In drone-centric systems, the primary vehicle is the drone, with examples including systems involving drone delivery stations \citep[see, e.g.][]{Chauhan2019MaximumDrones,Zhu2022Two-stageDrones, Hong2018APlanning} or trucks acting as mobile depots \citep[see, e.g.][]{Dukkanci2021MinimizingOptimization, YoungJeong2023DroneAnalysis, Poikonen2020TheProblem, Poikonen2020Multi-visitProblem}. We refer the reader to \cite{Dukkanci2024FacilityReviewb} for a comprehensive review on drone-centric delivery systems. 
On the other hand, in truck-centric systems, the truck is the main vehicle, and drones are used, for example, to resupply delivery trucks with newly available orders while en route \citep[see, e.g.][]{Dayarian2020Same-dayResupply, Pina-Pardo2021TheResupply, Pina-Pardo2024FleetDelivery, Pina-Pardo2024DynamicDelivery}.

We now review related works on the three main \ac{FSTSP} extensions and combinations thereof below. 

\paragraph{(i) Single truck with multiple single-drop drones}
\cite{Murray2020TheDrones} extend the classical \ac{FSTSP} to consider multiple heterogeneous drones to support a single delivery truck. They formulate a \ac{MILP} to solve small-scale instances and develop a heuristic approach to solve problems of realistic sizes. Shortly after, \cite{Raj2020TheSpeeds} investigate the benefits of allowing drones to travel at varying speeds (i.e., drone speeds are decision variables). To ensure the safe launch and recovery of multiple drones at the same location, the authors introduce a scheduling mechanism to explicitly queue the drones and avoid mid-air collisions.
\cite{Raj2020TheSpeeds} propose a three-phased heuristic approach that dynamically adjusts the drone speeds. Their computational results show that variable-speed drones tend to consume less energy per sortie and reduce truck waiting times.
\cite{Kang2021AnProblem} study a routing problem with one truck and multiple heterogeneous drones, where the truck must wait for the drones to return before continuing its travel. The authors develop a Benders decomposition algorithm that can solve instances with up to 50 customers.
Concurrently, \cite{Cavani2021ExactDrones} propose a branch-and-cut algorithm for the \ac{FSTSP} with multiple drones, which can solve instances with up to 24 customers to optimality.
Further, \cite{Moshref-Javadi2021AModels} develop a heuristic approach that can optimize different truck-and-drone routing systems, which differ depending on the level of synchronization required between the truck and the drones (e.g., the truck is allowed to wait for the drones in the same launch location or not).

\paragraph{(ii) Multiple trucks with single-drop drone(s)}
\cite{Sacramento2019AnDrones} extend the \ac{FSTSP} and propose a routing problem that minimizes the costs of operating multiple truck-and-drone tandems in parallel to serve customers. The authors propose an \ac{ALNS} metaheuristic to solve instances with up to 200 customers. 
\cite{Wang2022TruckdroneTime} examine the impact of time-dependent road travel times on a delivery system comprised of multiple trucks-and-drone tandems. The authors propose a \ac{MILP} formulation and an iterative local search heuristic to minimize the total logistics costs (consisting of fixed and variable costs for using vehicles, as well as time-dependent delivery costs of trucks). In their numerical experiments, \cite{Wang2022TruckdroneTime} show the cost benefits of using drones when considering time-dependent traffic conditions.
Regarding works considering multiple trucks equipped with multiple drones, \cite{Schermer2019AVariants} and \cite{Schermer2019AOperations} formulate \acp{MILP} to solve small-scale instances and propose heuristics for larger-scale instances. While \cite{Schermer2019AVariants} allow drones to be launched and retrieved at the same customer location, \cite{Schermer2019AOperations} also permit dispatching drones at discrete points along the network arcs to serve customers. 
\cite{Kitjacharoenchai2019MultipleApproach} propose a \ac{MILP} formulation and a genetic algorithm heuristic for the problem variant in which drones are not dedicated to any specific truck. The authors show that combining multiple trucks and drones can substantially reduce completion times compared to traditional truck-only delivery systems.
\cite{Tamke2021ADrones} propose an exact approach that can solve instances with up to 30 nodes of the problem variant considered in \cite{Schermer2019AVariants} without drone loops.

\paragraph{(iii) Single truck with a multi-drop drone}
Our study focuses on the third extension of the canonical \ac{FSTSP}, which considers that the drone, in a single truck-and-drone tandem, can serve multiple customers per sortie. Closely related works in the literature have been proposed by \cite{Gonzalez-R2020Truck-dronePlanning}, \cite{Liu2021Two-EchelonDrone}, and \cite{WindrasMara2022AnDrops}. Before providing a detailed review of each of these works, note that other combinations of this third extension with the other two have been proposed, involving increased numbers of trucks, drones, or both. The interested reader can refer to \cite{Leon-Blanco2022AProblem}, \cite{Luo2021TheMulti-Drones}, \citet{Poikonen2020Multi-visitProblem}, and \cite{Gonzalez-R2024AProblem} for works considering one truck and multiple multi-drop drones, and to \cite{Wang2019VehicleDrones}, \cite{Kitjacharoenchai2020TwoDelivery}, \cite{Gu2022AVisits}, \cite{Luo2022ATrips}, \cite{Yin2023AWindows}, \cite{Jiang2024AServices} and \cite{Meng2024TheWindows} for studies considering multiple trucks and multiple multi-drop drones.
\color{black}

To the best of our knowledge, \cite{Gonzalez-R2020Truck-dronePlanning} developed the first \ac{MILP} formulation for the \ac{FSTSP-MD}. In contrast to our problem definition (see Section \ref{sec:problem_definition}), the authors assume that the drone has an unlimited payload capacity, so the maximum number of deliveries per sortie is only restricted by its maximum flight endurance. Since the \ac{MILP} formulation is not able to solve any of the tested instances to optimality, \cite{Gonzalez-R2020Truck-dronePlanning} also propose an \ac{IGH} approach, using a simulated annealing scheme to escape local optima. They solved instances with up to 250 customers using their \ac{IGH} approach.


Subsequently, \cite{Liu2021Two-EchelonDrone} propose a route-based \ac{MILP} formulation that considers an energy consumption function for the drone battery based on the distance traveled and the weight of the packages carried by the drone (i.e., the drone is not constrained by the actual number of \emph{separate} packages it can deliver per sortie, but by the total energy it consumes per sortie). Since the proposed \ac{MILP} formulation cannot solve instances of more than five customers, the authors also propose a heuristic approach combining simulated annealing and tabu search. \cite{Liu2021Two-EchelonDrone} solved instances of up to 100 customers.

More recently, \cite{WindrasMara2022AnDrops} propose a \ac{MILP} formulation and an \ac{ALNS} heuristic. Similar to \cite{Gonzalez-R2020Truck-dronePlanning}, the maximum number of deliveries per drone sortie is only limited by the drone flight endurance (i.e., the drone is uncapacitated). Their computational results show that the \ac{MILP} formulation can solve instances of up to eight customers. Further, the authors show that their \ac{ALNS} heuristic outperforms the \ac{IGH} of \cite{Gonzalez-R2020Truck-dronePlanning}. 

\subsection{Research Gaps and Contributions} 

Our literature review reveals that most works assume that drones can only make a single delivery per sortie \citep{Moshref-Javadi2021ApplicationsReview}, even though existing commercial drones can make multiple deliveries during a single sortie \citep{Alamalhodaei2021WingcopterSky, Sakharkar2021A2ZDrone}. Further, studies addressing the \ac{FSTSP-MD} \citep{Gonzalez-R2020Truck-dronePlanning, Liu2021Two-EchelonDrone, WindrasMara2022AnDrops} consider that drones are limited by endurance only, and have an unlimited payload capacity (in terms of the number of separate packages the drone can deliver per sortie), which is inconsistent with current drone technology. Finally, due to the inherent complexity of the \ac{FSTSP-MD}, existing exact approaches fail to address instances with more than a few customers, making them impractical for real-world last-mile logistics applications (e.g., UPS visits roughly 120 customers per route; see \cite{Holland2017UPSRoutes}). Consequently, we make the following contributions to the extant literature.

First, we develop \ac{MD-SPP-H}, a heuristic approach for the \ac{FSTSP-MD} based on an order-first, split-second scheme. The core component of \ac{MD-SPP-H} is a novel split algorithm based on a shortest path problem, which finds \ac{FSTSP-MD} solutions in polynomial time for a given sequence of customers. We embed this split algorithm into a local search procedure combined with diversification techniques. 
Unlike existing solution approaches, \ac{MD-SPP-H} allows users to define the maximum number of drops the drone can perform in a single sortie, a critical feature for real-world applications of commercial cargo drones. 
    
Second, through extensive computational experiments over well-established instances of up to 250 customers, we show that \ac{MD-SPP-H} consistently outperforms state-of-the-art heuristics developed for the \ac{FSTSP-MD} and the \ac{FSTSP} (i.e., the single-drop scenario). Additionally, compared to the solutions obtained by the exact approach of \cite{Vasquez2021AnDecomposition} over small \ac{FSTSP} instances with up to 20 customers, \ac{MD-SPP-H} solves most instances to optimality, and with a minor average optimality gap of 0.15\%.
    
Lastly, we provide managerial insights regarding the effects of drone capacity, drone speed, flight endurance, and customer distribution, based on both stylized and real-world datasets. Notably, we show that combining a truck with a multi-drop drone can substantially reduce the total time needed to serve all customers compared to the truck-only scenario. We observe diminishing marginal returns of increasing both the number of maximum possible drops per drone sortie and the drone speed, with incremental benefits depending on the remaining drone operational characteristics and the distribution of service locations.

\section{Problem Definition} \label{sec:problem_definition}

We develop a heuristic to solve the \ac{FSTSP-MD}, one of the main extensions of classical \ac{FSTSP} as introduced by \cite{Murray2015TheDelivery}. Unlike the \ac{FSTSP}, the drone can perform not just one but multiple drops per sortie. Except for this enhanced drone capability, we follow the standard assumptions of the canonical \ac{FSTSP}. For completeness, we provide a detailed problem definition and highlight when deviating from the standard \ac{FSTSP} assumptions.

A single truck and a single drone cooperatively deliver customer packages. The truck has an unlimited capacity, and can readily accommodate the drone and all packages, with no restriction on its travel time or distance. In contrast, the drone has to meet two constraints, which are a specified maximum airtime per flight and a limited payload capacity. 
The first constraint reflects practical operations, where the flight endurance is limited by the drone battery, which can be swapped whenever the drone returns to the truck. It is assumed that the battery can be swapped in negligible time. 
The second constraint limits the maximum number of packages the drone can carry and deliver to a predefined number. Unlike the \ac{FSTSP}, the drone can carry and deliver multiple packages in the same sortie. It is assumed that each customer has a unit demand (i.e., each customer orders one package) and can be served by either the truck or the drone.
Consequently, the number of customer visits the drone can undertake in a single sortie is restricted by its maximum flight endurance and payload capacity. 

The cooperative truck-and-drone delivery system starts from and finishes at the depot. The vehicles may depart and return to the depot together (meaning the truck carries the drone) or independently. The objective is to determine the truck and drone delivery assignments and their corresponding synchronized routes that minimize the completion time of the delivery process (i.e., the total elapsed time until the last vehicle arrives at the depot).

To orchestrate the operations between the truck and the drone, the following rules are defined. The drone can either be transported by the truck or dispatched to serve customers. In a drone delivery operation, the drone retrieves packages from the truck, delivers them to the customers, and then meets up with the truck or travels to the depot. Notably, both the launch and rendezvous points are confined to the depot or customer locations, and these points must be different (i.e., the truck is not allowed to stop and wait for the drone at the same location). Further, any previously visited customer location cannot be visited again, even for launch or recovery processes. Regarding the rendezvous locations, if the truck arrives first, it has to wait for the drone to be recovered; conversely, if the drone arrives first, it has to wait for the truck to recover it. The drone can only land atop the truck, so in the latter case, the drone wait time has to be accounted for when determining feasible drone delivery operations. 

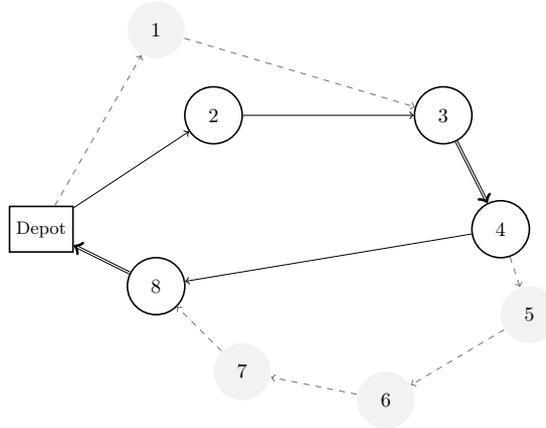
\begin{figure} [htbp]
    \centering
    \resizebox{0.45\textwidth}{!}{
	\begin{tikzpicture} 
		\begin{scope} [every node/.style={circle, thick, draw, minimum size=1cm}]
		\node [rectangle, minimum size=0.8cm] (depot) at (-1,0) {\small Depot};
		\node  (n1) at (2,2) {2};
		\node  [draw=none] [fill=A] (n2) at (1,3.5)  {1};
		\node  (n3) at (6,2) {3};
		\node  (n4) at (7,0)  {4};
		\node  [draw=none] [fill=A] (n5) at (7.5,-1.5)  {5};
		\node  [draw=none] [fill=A] (n6) at (5,-3)  {6};
		\node  [draw=none] [fill=A] (n7) at (2.5,-2.5)  {7};
  		\node  (n8) at (1,-1)  {8};
		\end{scope}
		
		\foreach \from/\to in {depot/n1,n1/n3,n4/n8}
		\draw [->, line width=0.5pt] (\from) -- (\to);

		\foreach \from/\to in {depot/n2,n2/n3,n4/n5, n5/n6,n6/n7,n7/n8}
		\draw [->, dashed, F, line width=0.5pt] (\from) -- (\to);

		\foreach \from/\to in {n3/n4, n8/depot}
		\draw [->, line width=0.5pt, double] (\from) -- (\to);
	
	\end{tikzpicture}
	}
	\caption{Illustrative example of a feasible \ac{FSTSP-MD} solution.}
	\label{fig:example}
\end{figure}

Figure \ref{fig:example} shows an illustrative example of a feasible \ac{FSTSP-MD} solution for an instance of eight customers. The truck leaves the depot to serve Customer 2, and the drone is dispatched to Customer 1. The vehicles meet at Customer 3 (the first to arrive must wait for the other). The drone is carried by the truck from Customer 3 to Customer 4. Then, the drone is launched from the truck to serve Customers 5, 6, and 7 while the truck travels to serve Customer 8. The vehicles meet again at Customer 8 and travel together to the depot.

\section{The Multi-Drop Shortest Path Problem--Based Heuristic} 
\label{sec:methodology}

Because of the impracticality of using exact approaches to address realistically-sized instances \citep[see, e.g.,][]{WindrasMara2022AnDrops,Gonzalez-R2020Truck-dronePlanning}, we develop the \acf{MD-SPP-H}, a simple and effective order-first, split-second heuristic approach (see \cite{Prins2014Order-firstReview} for a comprehensive review on order-first, split-second heuristics for routing problems).  
Due to our interest in solving practical last-mile logistics instances (which normally include more than 100 customers; see \cite{Holland2017UPSRoutes} and \cite{Merchan20242021Set}), we do not provide a \ac{MILP} formulation for the \ac{FSTSP-MD}. We provide a state-of-the-art heuristic that can solve instances with up to 250 customers in reasonable computational times (see Section \ref{sec:heuristic_performance}).
As mentioned in Section \ref{sec:literature}, existing \ac{MILP} models fail to address instances with more than eight customers. 
The interested reader can refer to \cite{Gonzalez-R2020Truck-dronePlanning} and \cite{WindrasMara2022AnDrops} for \ac{MILP} formulations of the \ac{FSTSP-MD} in which the drone payload capacity is not limited.

\begin{table}[H]
    \centering
    \caption{Summary of terms and acronyms used throughout this section.}
    \renewcommand{\arraystretch}{1}
    \scalebox{0.8}{
    \begin{tabular}{ll}
    \toprule
    {Term} & {Definition} \\
        \midrule
        \multicolumn{2}{l}{\textit{Acronyms}} \\ 
        \midrule
        \acs{TSP} & \acl{TSP}. \\
        \acs{FSTSP-MD} & Flying Sidekick Traveling Salesman Problem with Multiple Drops. \\ 
        \acs{MD-SPP} & Multi-Drop Shortest Path Problem. \\
        \acs{MD-SPP-H} & Multi-Drop Shortest Path Problem--Based Heuristic. \\
        \midrule
        \multicolumn{2}{l}{\textit{Terms}} \\ 
        \midrule
        TSP tour & Sequence of nodes where each customer appears exactly once and begins and ends at the depot. \\
        TSP solution space & Set of all possible TSP tours. \\
        FSTSP-MD solution & Feasible solution to the \ac{FSTSP-MD}. \\ 
        Benchmark solution & Best \ac{FSTSP-MD} solution found during the current improvement phase of \ac{MD-SPP-H}.\\
        \bottomrule
    \end{tabular}
    }
 \label{tab:acronyms}
\end{table}

In the remainder of this section, we begin by presenting a high-level description of the \ac{MD-SPP-H}. We then formally describe our split algorithm, referred to as the \ac{MD-SPP} in the following. Finally, we describe the generation of the initial solution and how \ac{MD-SPP-H} explores the TSP solution space. Table \ref{tab:acronyms} summarizes the acronyms and terms used throughout this section.

\subsection{High-level Heuristic Description}

A flowchart of \ac{MD-SPP-H} is presented in Figure \ref{fig:heuristic}. We begin by generating an initial TSP tour and its corresponding \ac{FSTSP-MD} solution (through the \ac{MD-SPP}), which are designated as the TSP tour and benchmark solution for the first iteration, respectively. In the improvement phase, a set of neighborhood operators is applied to the current TSP tour to generate a composite neighborhood. Through each neighborhood operator, we create a neighbor TSP tour and then solve the \ac{MD-SPP} to generate a new \ac{FSTSP-MD} solution. Next, we update the benchmark solution if the new \ac{FSTSP-MD} solution is better. After exploring the entire composite neighborhood of the current TSP tour, we update the \emph{best-so-far} (BSF) solution with the benchmark solution if the latter is better. Further, if the benchmark solution was not updated during the improvement phase, we apply perturbation to generate the TSP tour and benchmark solution for the next iteration. We repeat this iterative process until certain termination criteria (e.g., a maximum number of consecutive iterations without improvement or a certain time limit) are attained. \ac{MD-SPP-H} ends by returning the BSF solution.

\begin{figure}[H]
    \centering
    \includegraphics[width=1\textwidth]{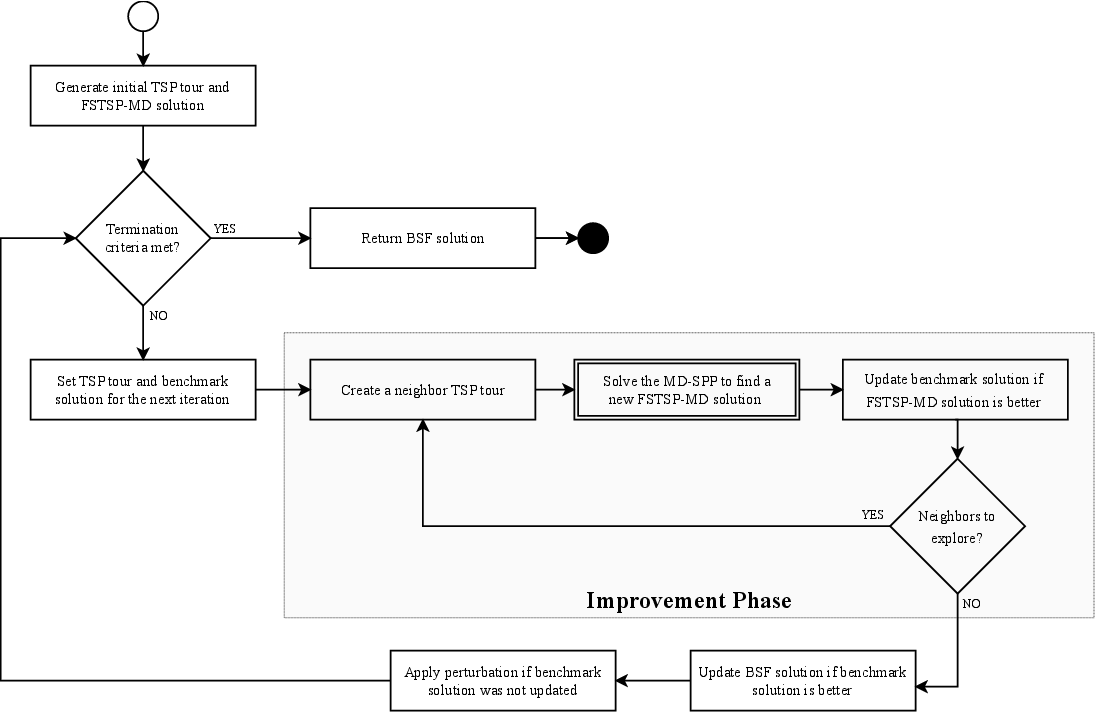}
    \caption{High-level description of the \ac{MD-SPP-H}.}
    \label{fig:heuristic}
\end{figure}

\subsection{The Multi-Drop Shortest Path Problem (Split Algorithm)}

The core component of \ac{MD-SPP-H} is the \ac{MD-SPP}, which splits a TSP tour into truck and drone routes (thus creating a feasible \ac{FSTSP-MD} solution) in polynomial time. To thoroughly explain the \ac{MD-SPP}, we start by motivating its conceptualization. We then formally define the \ac{MD-SPP} and present a polynomial-time algorithm for solving it. Throughout this section, we denote $N' = N \cup \{0\}$ as the set of nodes, where $0$ is the depot and $N = \{1,...,n\}$ is the set of customers.

\subsubsection{Motivation} \label{sec:heuristic_motivation}

The conceptualization of the \ac{MD-SPP} was inspired by the work of \cite{Kundu2021AnProblem}, who developed a split algorithm for the \ac{FSTSP} (i.e., the single-drop scenario) that finds an optimal \ac{FSTSP} solution for a given TSP tour in $\mathcal{O}(n^3)$ time. Generally speaking, this split algorithm works as follows: first, it generates a weighted directed acyclic graph by enumerating all forward-moving arc times and then finds the shortest path in the generated graph (see \ref{app:propositions} for further details). 

Extending the shortest path approach presented in \cite{Kundu2021AnProblem} for the \ac{FSTSP-MD} requires enumerating all forward-moving arc times for multiple drops. For a given TSP tour, performing this complete enumeration requires $\mathcal{O}(2^{n})$ time (see Proposition \ref{prop:complexity_extended_approach} of \ref{app:propositions}), rendering it impractical to solve realistically-sized \ac{FSTSP-MD} instances. Therefore, to formulate a polynomial-time split algorithm that can be efficiently nested in a local search scheme, we introduce a simple \emph{eligibility criterion} to enumerate only a subset of forward-moving arc times, albeit at the expense of “missing” some arc times, and thereby potential solutions that are examined when performing a complete enumeration. However, as we will discuss later, it can be argued that efficient local search and perturbation techniques can still visit those solutions that are intentionally omitted.

\subsubsection{Formulation of the \ac{MD-SPP}}

Our eligibility criterion can be interpreted as imposing an additional constraint on the problem of finding an optimal \ac{FSTSP-MD} solution for a given TSP route. Specifically, we enforce that the drone can only visit customers immediately following each potential launch node (the rest of the possible drone operations are considered ineligible). More formally, we introduce the concept of \emph{partition node}, which is defined below.

\begin{definition} \label{def:partition_node}
    (Partition node) Let $\sigma = (0,1,..., n, n+1)$ be a TSP tour whose nodes have been re-indexed based on their positions (where $0$ and $n+1$ denote the depot). Given a subsequence $\sigma'_{ij} = (i, i+1, ..., j-1, j)$ of $\sigma$, node $k \in N$ (with $i \leq k < j$) is called a \emph{partition node} if the drone can be launched at node $i \in N'$, perform the route $(i+1, i+2, ..., k-1, k)$, and be recovered at node $j \in N'$, while the truck performs the route $(i, k+1, ..., j-1, j)$. When $k = i$, the drone is carried by the truck throughout the entire subsequence.  
\end{definition}

\paragraph{Illustrative example}

Consider an instance of eight customers, as the one shown in Figure \ref{fig:example}. Further, consider the TSP tour $\sigma = (0,1,...,8,0)$ and the subsequence $\sigma'_{48} = (4,5,6,7,8)$, where Node 4 is the launch node and Node 8 is the recovery node. The enumeration of all potential truck and drone routes (between Node 4 and Node 8) is listed in Table \ref{tab:example_partition_node}. Assuming a sufficiently large drone flight endurance and payload capacity, our \ac{MD-SPP} only examines those alternatives for which a partition node exists. For illustration, Figure \ref{fig:example_partition_node} shows how the truck and drone routes are performed when Node 6 is the partition node. However, note that the omitted alternatives are examined for neighboring TSP tours of $\sigma$ listed in the last column of Table \ref{tab:example_partition_node} (these neighboring TSP tours can be readily generated by applying our local search operators over $\sigma$; see Section \ref{sec:4.3}).

\begin{table}[htbp]
  \centering
  \renewcommand{\arraystretch}{1.3}
  \caption{Illustrative example of the effects of using partition nodes for subsequence $(4,5,6,7,8)$.}
  \scalebox{0.9}{
    \begin{tabular}{llccc}
    \toprule
    \multicolumn{1}{c}{\thead{Drone route}} & \multicolumn{1}{c}{\thead{Truck route}} & \thead{Partition node} & \multicolumn{1}{c}{\thead{Examined?}} & \multicolumn{1}{c}{\thead{If not, in which TSP \\ tour is it examined?}} \\
    \midrule
    Carried by the truck & $(4,5,6,7,8)$ & 4     & \checkmark &  \\
    $(4,5,8)$ & $(4,6,7,8)$ & 5     & \checkmark &  \\
    $(4,6,8)$ & $(4,5,7,8)$ & --    & --    & $(0,1,2,3,4,6,5,7,8,0)$ \\
    $(4,7,8)$ & $(4,5,6,8)$ & --    & --    & $(0,1,2,3,4,7,5,6,8,0)$ \\
    $(4,5,6,8)$ & $(4,7,8)$ & 6     & \checkmark &  \\
    $(4,5,7,8)$ & $(4,6,8)$ & --    & --    & $(0,1,2,3,4,5,7,6,8,0)$ \\
    $(4,6,7,8)$ & $(4,5,8)$ & --    & --    & $(0,1,2,3,4,6,7,5,8,0)$ \\
    $(4,5,6,7,8)$ & $(4,8)$ & 7     & \checkmark &  \\
    \bottomrule
    \end{tabular}%
    }
   \label{tab:example_partition_node}%
\end{table}

Note that introducing a partition node allows each \ac{FSTSP-MD} solution to be generated from a unique TSP tour by construction. In contrast, without introducing partition nodes (i.e., when all potential assignments of customers to the truck and the drone are evaluated; see \ref{app:propositions}), the same \ac{FSTSP-MD} solution can be generated from different TSP tours. For example, in Table \ref{tab:example_partition_node}, the same \ac{FSTSP-MD} solution would be generated from $\sigma = (0,1,...,8,0)$ and the other TSP tours listed in the last column when performing the complete enumeration. Therefore, introducing a partition node not only allows the formulation of a polynomial-time split algorithm (see Section \ref{polytimesplit}) but also reduces redundancy (in terms of avoiding inspecting the same \ac{FSTSP-MD} solution multiple times when exploring the neighborhood of the current TSP tour).

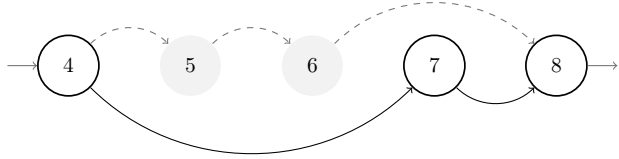
\begin{figure} [H]
    \centering
    \resizebox{0.5\textwidth}{!}{
	\begin{tikzpicture} 
		\begin{scope} [every node/.style={circle, thick, draw, minimum size=1cm}]
		\node  (n4) at (0,0)  {4};
		\node  [draw=none] [fill=A] (n5) at (2, 0)  {5};
            \node  [draw=none] [fill=A] (n6) at (4,0)  {6}; 
		\node  (n7) at (6,0)  {7};
  		\node  (n8) at (8,0)  {8};
		\end{scope}
  
        \node [invisible] (n3) at (-1,0) {};
        \node [invisible] (n9) at (9,0) {};
	
            \draw [->, line width=0.5pt] (n4) to [out=-45, in=-135] (n7);
            \draw [->, line width=0.5pt] (n7) to [out=-45, in=-135] (n8);

		\foreach \from/\to in {n3/n4, n8/n9}
		\draw [->, F, line width=0.5pt] (\from) -- (\to);

    	\draw [->, dashed, F, line width=0.5pt] (n4) to [out=45, in=135] (n5);
            \draw [->, dashed, F, line width=0.5pt] (n5) to [out=45, in=135] (n6);
            \draw [->, dashed, F, line width=0.5pt] (n6) to [out=45, in=135] (n8);

    \end{tikzpicture}
	}
	\caption{Illustrative example of subsequence $(4,5,6,7,8)$ partitioned at Node 6. Dashed lines denote the drone route and continuous lines the truck route.}
	\label{fig:example_partition_node}
\end{figure}

Now, we formally define the \ac{MD-SPP} in Problem \ref{prob:SPP}. Using the illustrative example of Section \ref{sec:problem_definition}, if the underlying TSP tour is $\sigma = (0,1,...,8,0)$, the collection of tuples defined in Problem \ref{prob:SPP} is given by $P=\{((0,1,2,3), 1), ((3,4),3),((4,5,6,7,8), 7), ((8,0),8)\}$, which represents the feasible \ac{FSTSP-MD} solution shown in Figure \ref{fig:example}. 

\begin{problem} \label{prob:SPP}
    (The Multi-Drop Shortest Path Problem) Let $\sigma = (0,1,..., n, n+1)$ be a TSP tour whose nodes have been re-indexed based on their positions. The \ac{MD-SPP} aims to find a collection of tuples $P = \{(\sigma'_{i_0,i_1}, k_0), (\sigma'_{i_1,i_2}, k_1),..., (\sigma'_{i_{j},i_{j+1}}, k_j),...,(\sigma'_{i_{\ell},i_{\ell+1}}, k_\ell)\}$ such that (i) $k_j$ is a partition node of subsequence $\sigma'_{i_{j},i_{j+1}}$, for all $j \in \{0,...,\ell\}$, (ii) the subsequences of $P$ form the TSP tour, and (iii) the completion time is minimized. 
\end{problem}

\subsubsection{A Polynomial-Time Algorithm for Solving the \ac{MD-SPP}}
\label{polytimesplit}

Based on the notation listed in \ref{tab:notation}, Algorithm \ref{alg:1} presents a $\mathcal{O}(n^3)$--time procedure to find the optimal value of Problem \ref{prob:SPP} (i.e., the completion time at node $n+1$). The algorithm consists of three nested loops. The first (Step 2) and third loop (Step 8) of Algorithm \ref{alg:1} sequentially consider all pairs $(i,j)$ as the launch and recovery node for a drone operation, while the second loop (Step 6) iteratively increases the partition node while ensuring the operation's feasibility (i.e., respecting the maximum number of drops and the drone flight endurance). 
For each partition node $k$, the coordinated time taken by the truck-and-drone tandem to travel from the launch node $i$ to the recovery node $j$, $c_{i,j}^{k}$, is calculated in Step 14, and for each feasible operation, the completion time at the recovery node $j$ is updated using the relaxation steps in Steps 5 and 16 (particularly, Step 5 considers an operation where the truck traverses from a node to its adjacent node, carrying the drone with it).

\begin{table}[H]
    \centering
    \small
    \caption{Notation used in Algorithm \ref{alg:1}.}
    \renewcommand{\arraystretch}{1.1}
  \resizebox{\textwidth}{!}{%
    \begin{tabular}{ll}    \toprule
    Symbol &  Description \\
    \midrule
    $n$ & Number of customers.\\
    $\sigma$ & Any given TSP tour (with nodes re-indexed based on their positions).\\
    $\sigma'_{ij}$ & Any given subsequence of a TSP tour (with nodes re-indexed based on their positions). \\
    $T_{i}$ & Completion time at node $i$, with $i \in \{0,..., n+1\}$. \\
    $t_{i,j}$ & Truck travel time from node $i$ to $j$. \\
    $d_{i,j}$ & Drone travel time from node $i$ to $j$. \\
    $c_{i,j}^k$ & Coordinated time taken by the truck-and-drone tandem to travel from node $i$ to $j$, with partition node $k$. \\
    $D$ & Maximum number of drops the drone can perform between launch and recovery. \\
    $E$ & Maximum drone flight endurance (in time units). \\
    $a_i$, $\alpha_\text{D}$, $\alpha_\text{T}$ & Auxiliary variables to calculate arc times iteratively. \\
    $[i,j]$ & Set of integers between $i$ and $j$ (both included), i.e., $[i,j] = \{i,...,j\}$. \\ 
    \bottomrule
    \end{tabular}
    }
 \label{tab:notation}
\end{table}

\begin{algorithm}[htbp]
	\SetAlgoVlined
	\DontPrintSemicolon
	\setcounter{AlgoLine}{0}
	\SetKwInOut{Input}{Input}
	\SetKwInOut{Output}{Output}
	\Input{TSP tour $\sigma = (0,1,..., n, n+1)$ with nodes re-indexed based on their positions.}
	\Output{Completion time, i.e., $T_{n+1}$.}
        $T_0 \gets 0$ and $T_i \gets \infty, \forall i \in [1,n+1]$\;
        \For{$i \in [0,n]$}{
            $a_i \gets  0$\;
            \If{$T_{i}+t_{i,i+1}$ < $T_{i+1}$}{
                $T_{i+1} \gets T_{i}+t_{i,i+1}$ \Comment{Relaxation step}\; 
            }        
            \For{$k \in [i + 1, \min\{i+D,n\}]$}{
                 $a_i \gets  a_i + d_{k-1,k}$\;
                 \For{$j \in [k + 1,n+1]$}{
                    $\alpha_\text{D} \gets a_i + d_{k,j}$\;
                    \If{$j = k+1$}{
                        $\alpha_\text{T} \gets t_{i,j}$\;
                        }
                    \Else{
                        $\alpha_\text{T} \gets \alpha_\text{T} + t_{j-1,j}$\;
                    } 
                    $c_{i,j}^k \gets \max\{\alpha_\text{T}, \alpha_\text{D}\}$\;
                    \If{$T_{i} + c_{i,j}^k < T_{j}$ \textsc{\bf and}  $c_{i,j}^k \leq E $}{
                        $T_{j} \gets T_{i} + c_{i,j}^k$  \Comment{Relaxation step}\;}
                 }
            }
        }
	\caption{Algorithm to solve Problem \ref{prob:SPP}.}
	\label{alg:1}
\end{algorithm}

Mathematically, for a given TSP tour $\sigma$, note that Algorithm \ref{alg:1} solves the \ac{MD-SPP} by sequentially computing arc times $c_{i,j}$ for each subsequence $\sigma'_{ij} = (i,...,j)$ through Equation \eqref{c_ijk} and Equation \eqref{c_ij}:
\begin{align} 
c_{i,j}^k & =  
\begin{cases}
    \displaystyle \sum_{m=i}^{j-1} t_{m,m+1},& \text{if } k=i,\\
    \max\left\{\displaystyle \sum_{m=i}^{k-1} d_{m,m+1} + d_{k,j} \>,\> t_{i,j}\right\},              & \text{if } k>i \land j=k+1,\\
    \max\left\{\displaystyle \sum_{m=i}^{k-1} d_{m,m+1} + d_{k,j} \>,\> t_{i,k+1} + \sum_{m=k+1}^{j-1} t_{m,m+1}\right\},              & \text{otherwise},
\end{cases}  \label{c_ijk} \\
c_{i,j} & =  \min \{c_{i,j}^k: i \leq k <j \land k \leq i + D \land c_{i,j}^k \leq E \}, \label{c_ij}
\end{align}
where $c_{i,j}^k$ represents the arc time if node $k$, with $i \leq k <j$, partitions subsequence $\sigma'_{ij}$ (note that computing $c_{i,j}$ requires evaluating all possible partition nodes of subsequence $\sigma'_{ij}$).

\begin{proposition} \label{proposition}
    For a given TSP tour, Algorithm \ref{alg:1} solves Problem \ref{prob:SPP} in $\mathcal{O}(n^3)$ time.
\end{proposition}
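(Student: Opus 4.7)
The plan is to prove correctness by induction on the node index $j$, and then derive the running-time bound by counting iterations of the three nested loops. The key observation is that, under the eligibility criterion, Problem \ref{prob:SPP} is a shortest-path problem on a weighted DAG whose vertices are the positions $0, 1, \ldots, n+1$ of the TSP tour. For each pair $(i,j)$ with $i < j$, there is an arc from $i$ to $j$ whenever the subsequence $\sigma'_{ij}$ admits at least one feasible partition node $k \in [i, \min\{i+D, j-1\}]$ with $c_{i,j}^k \leq E$, and the arc weight is $c_{i,j} = \min_k c_{i,j}^k$ given by Equations \eqref{c_ijk}--\eqref{c_ij}. By Bellman's principle of optimality, the optimal completion time satisfies $T_j^* = \min_{i<j} \{T_i^* + c_{i,j}\}$, with $T_0^* = 0$.

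For the correctness argument, I would show by strong induction on $j$ that, after the outer loop has processed all indices less than $j$, the array entry $T_j$ equals $T_j^*$. The base case $T_0 = 0$ holds by initialization. For the inductive step, fix $j$ and let $(i^*, k^*)$ attain the optimum. When the outer loop reaches $i = i^*$, by the inductive hypothesis $T_{i^*} = T_{i^*}^*$, and the middle and inner loops enumerate every feasible pair $(k,j)$ with $k \in [i^*+1, \min\{i^*+D, n\}]$ and $j \in [k+1, n+1]$, plus the direct truck move $k = i^*$, $j = i^*+1$ handled in Step 5. In each iteration, Step 15 verifies the endurance constraint $c_{i,j}^k \leq E$, so every feasible $(i,k,j)$ triple is relaxed against $T_j$, and none other is. Hence $T_j$ cannot exceed $T_{i^*}^* + c_{i^*,j}^{k^*} = T_j^*$, and conversely no relaxation can drive $T_j$ below $T_j^*$ since every relaxing arc corresponds to a feasible operation.

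A subtlety I would handle carefully is that Algorithm \ref{alg:1} does not compute $c_{i,j}^k$ from Equation \eqref{c_ijk} directly; instead it maintains auxiliary variables $a_i$, $\alpha_\text{D}$, and $\alpha_\text{T}$ incrementally. I would verify, by a short inner induction on the loop variables, that after Step 7 one has $a_i = \sum_{m=i}^{k-1} d_{m,m+1}$, after Step 9 one has $\alpha_\text{D} = a_i + d_{k,j}$, and after Steps 11--13 one has $\alpha_\text{T} = t_{i,k+1} + \sum_{m=k+1}^{j-1} t_{m,m+1}$ (collapsing to $t_{i,j}$ when $j = k+1$). It then follows that the value assigned in Step 14 coincides with all three cases of Equation \eqref{c_ijk}.

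For the complexity bound, the outer loop over $i$ executes $n+1$ times, the middle loop over $k$ executes at most $\min\{D, n-i\} = \mathcal{O}(n)$ times in the worst case (when the payload cap is non-binding, $D = \Theta(n)$), and the inner loop over $j$ executes $\mathcal{O}(n)$ times. The incremental updates ensure the body runs in $\mathcal{O}(1)$ time per iteration, giving a total of $\mathcal{O}(n^3)$. The main obstacle I anticipate is the bookkeeping for the incremental updates: one must verify that reinitializing $\alpha_\text{T}$ at the boundary case $j = k+1$ is consistent across iterations of $k$, and that the loop ranges $k \in [i+1, \min\{i+D, n\}]$ together with Step 4--5 jointly cover the case $k = i$ without double-counting, so that every feasible partition (and only those) is evaluated exactly once.
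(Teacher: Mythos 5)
Your proposal is correct and follows essentially the same route as the paper: both arguments reduce Problem \ref{prob:SPP} to a shortest-path computation on a DAG processed in topological (positional) order, establish that relaxations never undershoot the true distance and that processing the arcs of an optimal tuple collection in order attains it (you package this as one strong induction on $j$; the paper splits it into an upper-bound lemma and a sequential-relaxation lemma), and obtain $\mathcal{O}(n^3)$ by counting the at most $\sum_{i}\sum_{j>i}(j-i)=\Theta(n^3)$ partition-node evaluations. Your explicit verification that the incremental updates of $a_i$, $\alpha_\text{D}$, $\alpha_\text{T}$ reproduce Equation \eqref{c_ijk}, and your flagged check that the $k=i$, $j>i+1$ truck-only case is covered by chaining the unit-step relaxations of Step 5, are sound additions that the paper leaves implicit.
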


\begin{proof}[Proof]
    See proof in \ref{app:propositions}.
\end{proof}

\subsection{Exploration of the TSP Solution Space} \label{sec:4.3}

This section describes how \ac{MD-SPP-H} explores the TSP solution space. We start by describing the procedures for generating the initial solution and the neighborhoods. We then present the perturbation mechanisms that allow \ac{MD-SPP-H} to escape local optima and explore unexplored regions of the solution space.

\paragraph{Generation of the initial solution} 

To create the initial TSP tour, we use the Concorde solver of \cite{Applegate2006TheStudy} (using the QSopt callable library) to obtain an optimal TSP tour. The initial \ac{FSTSP-MD} solution is then generated by solving the \ac{MD-SPP}. This \ac{FSTSP-MD} solution is designated as the BSF solution and the benchmark solution for the first iteration of \ac{MD-SPP-H}.

\paragraph{Neighborhood generation}

In the improvement phase, we use a composite neighborhood obtained by sequentially applying a set of neighborhood operators to the current TSP tour. Specifically, similar to \cite{Agatz2018OptimizationDrone}, we first explore the neighborhood of the current TSP tour generated by the one-point (1-p) operator (also known as relocate operator; see \cite{Savelsbergh1992TheDuration}), where a customer is relocated to another position within the current TSP tour; then the neighborhood generated by the two-point (2-p) operator, where two customers are swapped; and finally, the neighborhood generated by the 2-opt operator, where two arcs are removed and replaced with two new ones. Note that the entire exploration of each of these neighborhoods requires $\mathcal{O}(n^2)$ time. While exploring these neighborhoods, the benchmark solution is updated whenever a new FSTSP-MD solution with a better objective value is found.

After exploring the entire composite neighborhood of the current TSP tour, we update the BSF solution with the benchmark solution if the latter is better. Further, we maintain the benchmark solution for the next iteration if it was updated during the improvement phase (in this case, the TSP tour of the benchmark solution is also set as the TSP tour for the next iteration). Otherwise, we set the benchmark solution and the TSP tour for the next iteration according to the perturbation phase.

\paragraph{Perturbation phase} 

A critical component in designing an effective heuristic approach involves incorporating diversification (or perturbation) mechanisms to guide the search into previously unexplored regions within the solution space. As previously mentioned, we enter the perturbation phase if the benchmark solution is not updated during the improvement phase.

Depending on the number of iterations without improvement, we enter a small or big perturbation phase. 
Specifically, whenever we perform a small perturbation $\eta$ times without yielding any improvement (after subsequent neighborhood search), we perform a big perturbation. 
For the small perturbation to the current TSP tour, we randomly select two subsequences from the tour, reverse their orders, and reintegrate them into their original positions (two sequential 2-opt moves). 
For the big perturbation, we first restart the search from the TSP tour of the BSF solution. We then perform the aforementioned reversal of two subsequences (i.e., our small perturbation), followed by a mutation operator that introduces random swaps within the two subsequences. Here, each position within the subsequences has a probability $p$ of being swapped with another random position within the same subsequence.

After modifying the current TSP tour either by the small or big perturbation mechanisms, we solve the \ac{MD-SPP}, and the corresponding \ac{FSTSP-MD} solution is set as the benchmark solution for the next iteration.

\color{black}

\section{Heuristic Performance} \label{sec:heuristic_performance}

This section presents extensive computational experiments to show the performance of \ac{MD-SPP-H}. \ac{MD-SPP-H} was implemented in Java 8 on a scientific HPC cluster. The computational nodes have Intel Xeon CPU E3-1284L v4 processors, each featuring four cores (1.19 GHz nominal, 3.8 GHz peak). The nodes operated on an \text{x86\_64} architecture. Based on preliminary experiments, we set $\eta = 10$ and $p = 0.1$ in the perturbation phase of \ac{MD-SPP-H} (see Section \ref{sec:4.3}). Further, as termination criteria, we stop \ac{MD-SPP-H} after 200 consecutive iterations without improvement (a number determined through preliminary experiments) or after a given time limit (described later), whichever comes first. For the interested reader, we provide all problem instances used and detailed results in our GitHub repository at \url{https://github.com/schasarah/FSTSP-MD-Experiment-Results.git}.

We focus on realistically sized instances with 50 or more customers (see, e.g., \cite{Holland2017UPSRoutes} and \cite{Merchan20242021Set}). Due to the inherent complexity of these instances, comparing \ac{MD-SPP-H} with exact solution approaches is not possible (see Section \ref{sec:literature}). Instead, we conduct a pairwise comparative analysis against other state-of-the-art heuristics developed for the \ac{FSTSP-MD} and the \ac{FSTSP} (i.e., the single-drop scenario). Despite \ac{MD-SPP-H} being tailored for multiple drops per drone sortie, we also evaluate its performance on the \ac{FSTSP} for a comprehensive understanding.

For benchmark purposes, we consider the following solution approaches representative of the current academic state-of-the-art:
(i) the \ac{IGH} of \cite{Gonzalez-R2020Truck-dronePlanning} for the \ac{FSTSP-MD};
(ii) the \ac{ALNS} of \cite{WindrasMara2022AnDrops} for the \ac{FSTSP-MD};
(iii) the \ac{EP-All} heuristic of \cite{Agatz2018OptimizationDrone} for the \ac{FSTSP}; and
(iv) the \ac{SPP-All} heuristic of \cite{Kundu2021AnProblem} for the \ac{FSTSP}.

To enable a fair and consistent comparison, we adhere to the maximum runtimes allowed and the number of independent runs specified in each benchmark study. We compute two main metrics for our comparison. First, the relative percentage difference between the \emph{best} objective values found by each heuristic ($\Delta_\text{best}$) is given by
\begin{equation*}
    \Delta_\text{best} = \frac{z^\text{bench}_\text{best} - z^\text{MD-SPP-H}_\text{best}}{z^\text{bench}_\text{best}} \times 100,
\end{equation*}
where $z^\text{bench}_\text{best}$ and $z^\text{MD-SPP-H}_\text{best}$ are the best values reached by the benchmark and \ac{MD-SPP-H} heuristics (across all runs), respectively. Second, the relative percentage difference between the \emph{average} objective values found by each heuristic ($\Delta_\text{avg}$) is given by
\begin{equation*}
    \Delta_\text{avg} = \frac{z^\text{bench}_\text{avg} - z^\text{MD-SPP-H}_\text{avg}}{z^\text{bench}_\text{avg}} \times 100,
\end{equation*}
where $z^\text{bench}_\text{avg}$ and $z^\text{MD-SPP-H}_\text{avg}$ are the average values reached by the benchmark and \ac{MD-SPP-H} heuristics (across all runs), respectively.

Note that \ac{EP-All} and \ac{SPP-All} are deterministic; therefore, these heuristics are executed only once (thus, we simply use $z$ to denote the objective value reached by these heuristics). However, we ran \ac{MD-SPP-H} ten independent times to ensure the robustness of our results. We adjust our aforementioned metrics accordingly and compare both our best and average results to the single solution value reached by the respective method. Finally, in \ref{app:supplementary_results_performance}, we also include an additional comparison with the exact approach of \cite{Vasquez2021AnDecomposition} over small \ac{FSTSP} instances with up to 20 customers.

\subsection{Comparison with state-of-the-art solution approaches for the FSTSP-MD}
\label{sec:benchmarkFSTSPMD}

This section begins by comparing the performance of \ac{MD-SPP-H} with the seminal \ac{IGH} of \cite{Gonzalez-R2020Truck-dronePlanning}. We then extend our analysis to compare \ac{MD-SPP-H} with the recent \ac{ALNS} heuristic proposed by \cite{WindrasMara2022AnDrops}.

\paragraph{Comparison with the \ac{IGH} of \cite{Gonzalez-R2020Truck-dronePlanning}}

To test their heuristic, \cite{Gonzalez-R2020Truck-dronePlanning} use the problem instances originally introduced by \cite{Agatz2018OptimizationDrone}.
For our comparison, we select instances from the dataset with 50 up to 250 customer locations. The spatial distribution of these locations follows three distinct patterns: uniform, single-centered, and double-centered. The benchmark set contains 10 instances for each node count and distribution pattern. Travel times for both the truck and drone are based on Euclidean distances. Service time, as well as launch and recovery duration, are not considered. Lastly, all customers can be either served by the truck or the drone (i.e., all customers are drone-eligible). 

As described in Section \ref{sec:literature}, \cite{Gonzalez-R2020Truck-dronePlanning} assume the drone can perform as many drops as its flight endurance allows (i.e., no constraints limit the total number of separate packages the drone can deliver per sortie). In our \ac{MD-SPP-H}, this assumption can be readily incorporated by removing the maximum number of drone drops constraint in Algorithm \ref{alg:1} (or, equivalently, by setting $D=n$). The maximum flight endurance is assumed proportional to the average drone travel time between the nodes, represented as $E = \frac{2}{n\cdot(n+1)}\sum_{(i,j)\in A} d_{i,j}$ (where $A$ denotes the set of arcs between the nodes). To ensure a precise comparison, we use the same maximum flight endurance values as \cite{Gonzalez-R2020Truck-dronePlanning}. Finally, \cite{Gonzalez-R2020Truck-dronePlanning} impose a maximum time limit of 600 seconds for \ac{IGH}, which we also use as a termination criterion for \ac{MD-SPP-H}. Table \ref{tab:ExperimentSetting} in \ref{app:supplementary_results_performance} summarizes the experiment setting. 

Before presenting the comparison between both heuristics, it is worth mentioning that \ac{IGH} assumes that the truck-and-drone delivery system starts at the depot but ends at the last customer listed in \cite{Agatz2018OptimizationDrone}'s instances. Consequentially, \ac{IGH} does not consider the time for the vehicles to return to the depot after serving the last customer listed. In contrast, \ac{MD-SPP-H} aims to minimize the completion time of the delivery process (i.e., the time when the last vehicle returns to the depot), consistent with most drone logistics studies (see, e.g., \cite{Chung2020OptimizationDirections}). 
Therefore, the comparison presented below is slightly biased since \ac{MD-SPP-H} and \ac{IGH} do not share exactly the same problem definition. 
However, the relative difference between both problems diminishes as the number of customers increases. With more customers, the total completion time increases, so variations in the final arcs of the routes constructed by the two heuristics have less impact. 

Table \ref{tab:TabBenchmarkGonazelezUniformFull} presents the results for instances with uniformly distributed locations, considering drone speed ratios of 1, 2, and 3 (i.e., the ratio of the drone speed to the truck speed). Results show that \ac{MD-SPP-H} outperforms \ac{IGH} in terms of solution quality. Overall, \ac{MD-SPP-H} improves the best solution values found in 144 out of 150 instances, with $\Delta_\text{best}$ of 25.3\%, 12.5\%, and 7.9\% averaged over all instances with a drone speed ratios of 1, 2, and 3, respectively, and $\Delta_\text{avg}$ of 27.4\%, 12.5\%, and 7.3\%. The most significant relative enhancements occur when the drone operates at slower speeds (i.e., when its speed is the same as that of the truck). Based on the quality measures reported in \cite{Gonzalez-R2020Truck-dronePlanning}, we observe that \ac{IGH} seems to lead to more consistent results as drone speed increases, regardless of customer distribution. 

We refer the reader to Tables \ref{tab:TabBenchmarkGonazelezSingleCenterFull} and \ref{tab:TabBenchmarkGonazelezDoubleCenterFull} for results on single- and double-center instances. Results show that \ac{MD-SPP-H} outperforms \ac{IGH} regardless of the customer distribution. However, we observe that the percentage improvements for the non-uniform distributions are slightly smaller. For example, for a drone speed ratio of 1, the value of $\Delta_\text{best}$ averaged over all instances with uniform distribution is 25.3\%. In contrast, for non-uniform distributions, the average $\Delta_\text{best}$ values are lower, at 16.7\% and 21.3\% for single- and double-center distribution, respectively. 

\begin{table}[htbp]
  \centering
\caption{Results of the comparison of \ac{MD-SPP-H} and \ac{IGH} (based on the solution published by \citet{Gonzalez-R2020Truck-dronePlanning}), considering $n=50$ up to $250$ uniformly distributed customers.}
    \renewcommand{\arraystretch}{1.1}
        \resizebox{\textwidth}{!}{
    \begin{tabular}{ccrrrrrrrrrrrrrrrrrrrrrrrrrrr}
    \toprule
    \multirow{3}[6]{*}{$n$} & \multirow{3}[6]{*}{ID} &       & \multicolumn{8}{c}{Speed ratio 1}                             &       & \multicolumn{8}{c}{Speed ratio 2}                             &       & \multicolumn{8}{c}{Speed ratio 3} \\
\cmidrule{4-11}\cmidrule{13-20}\cmidrule{22-29}          &       &       & \multicolumn{2}{c}{MD-SPP-H} &       & \multicolumn{2}{c}{IGH} &       & \multicolumn{2}{c}{$\Delta$} &       & \multicolumn{2}{c}{MD-SPP-H} &       & \multicolumn{2}{c}{IGH} &       & \multicolumn{2}{c}{$\Delta$} &       & \multicolumn{2}{c}{MD-SPP-H} &       & \multicolumn{2}{c}{IGH} &       & \multicolumn{2}{c}{$\Delta$} \\
\cmidrule{4-5}\cmidrule{7-8}\cmidrule{10-11}\cmidrule{13-14}\cmidrule{16-17}\cmidrule{19-20}\cmidrule{22-23}\cmidrule{25-26}\cmidrule{28-29}          &       &       & \multicolumn{1}{c}{$z_\text{best}$} & \multicolumn{1}{c}{$z_\text{avg}$} &       & \multicolumn{1}{c}{$z_\text{best}$} & \multicolumn{1}{c}{$z_\text{avg}$} &       & \multicolumn{1}{c}{$\Delta_\text{best}$} & \multicolumn{1}{c}{$\Delta_\text{avg}$} &       & \multicolumn{1}{c}{$z_\text{best}$} & \multicolumn{1}{c}{$z_\text{avg}$} &       & \multicolumn{1}{c}{$z_\text{best}$} & \multicolumn{1}{c}{$z_\text{avg}$} &       & \multicolumn{1}{c}{$\Delta_\text{best}$} & \multicolumn{1}{c}{$\Delta_\text{avg}$} &       & \multicolumn{1}{c}{$z_\text{best}$} & \multicolumn{1}{c}{$z_\text{avg}$} &       & \multicolumn{1}{c}{$z_\text{best}$} & \multicolumn{1}{c}{$z_\text{avg}$} &       & \multicolumn{1}{c}{$\Delta_\text{best}$} & \multicolumn{1}{c}{$\Delta_\text{avg}$} \\
    \midrule
    \multirow{10}[2]{*}{50} & 71    &       & 405.8 & 407.0 &       & 458.6 & 502.0 &       & 11.5  & 18.9  &       & 281.5 & 282.7 &       & 312.1 & 325.3 &       & 9.8   & 13.1  &       & 246.0 & 251.5 &       & 270.8 & 275.4 &       & 9.1   & 8.7 \\
          & 72    &       & 409.7 & 417.8 &       & 495.9 & 521.9 &       & 17.4  & 19.9  &       & 296.0 & 302.5 &       & 316.0 & 316.0 &       & 6.3   & 4.3   &       & 249.2 & 254.6 &       & 268.4 & 268.7 &       & 7.1   & 5.2 \\
          & 73    &       & 403.8 & 406.7 &       & 505.3 & 522.2 &       & 20.1  & 22.1  &       & 290.6 & 294.2 &       & 316.5 & 322.5 &       & 8.2   & 8.8   &       & 243.5 & 245.0 &       & 258.4 & 262.3 &       & 5.8   & 6.6 \\
          & 74    &       & 415.3 & 419.8 &       & 474.9 & 495.0 &       & 12.6  & 15.2  &       & 287.8 & 289.0 &       & 291.0 & 291.7 &       & 1.1   & 0.9   &       & 251.7 & 254.1 &       & 232.8 & 233.5 &       & -8.1  & -8.8 \\
          & 75    &       & 409.7 & 414.4 &       & 518.5 & 543.1 &       & 21.0  & 23.7  &       & 291.5 & 298.0 &       & 314.6 & 320.7 &       & 7.4   & 7.1   &       & 256.1 & 259.5 &       & 248.7 & 252.5 &       & -2.9  & -2.8 \\
          & 76    &       & 389.8 & 392.5 &       & 475.1 & 506.4 &       & 17.9  & 22.5  &       & 278.5 & 285.0 &       & 296.6 & 306.6 &       & 6.1   & 7.0   &       & 249.4 & 250.5 &       & 238.4 & 243.4 &       & -4.6  & -2.9 \\
          & 77    &       & 416.0 & 423.6 &       & 472.6 & 505.6 &       & 12.0  & 16.2  &       & 295.3 & 301.7 &       & 311.6 & 317.2 &       & 5.2   & 4.9   &       & 253.0 & 261.1 &       & 266.7 & 266.7 &       & 5.1   & 2.1 \\
          & 78    &       & 400.7 & 402.0 &       & 466.4 & 491.4 &       & 14.1  & 18.2  &       & 275.9 & 278.4 &       & 290.9 & 291.3 &       & 5.2   & 4.4   &       & 232.2 & 235.7 &       & 238.3 & 239.2 &       & 2.6   & 1.5 \\
          & 79    &       & 370.4 & 376.2 &       & 530.1 & 539.2 &       & 30.1  & 30.2  &       & 269.1 & 273.8 &       & 311.2 & 311.7 &       & 13.5  & 12.2  &       & 235.3 & 237.1 &       & 242.8 & 244.2 &       & 3.1   & 2.9 \\
          & 80    &       & 379.5 & 385.1 &       & 437.6 & 467.5 &       & 13.3  & 17.6  &       & 269.3 & 271.8 &       & 281.9 & 286.5 &       & 4.5   & 5.1   &       & 228.1 & 228.1 &       & 223.1 & 223.1 &       & -2.3  & -2.3 \\
    \midrule
    \multirow{10}[2]{*}{75} & 81    &       & 453.1 & 459.8 &       & 566.2 & 589.4 &       & 20.0  & 22.0  &       & 311.9 & 321.7 &       & 365.7 & 368.7 &       & 14.7  & 12.7  &       & 258.5 & 265.9 &       & 296.1 & 297.0 &       & 12.7  & 10.5 \\
          & 82    &       & 416.8 & 429.9 &       & 553.6 & 567.7 &       & 24.7  & 24.3  &       & 291.9 & 303.6 &       & 317.7 & 321.8 &       & 8.1   & 5.7   &       & 251.4 & 254.4 &       & 253.7 & 253.7 &       & 0.9   & -0.3 \\
          & 83    &       & 420.1 & 423.8 &       & 549.4 & 567.0 &       & 23.5  & 25.3  &       & 291.8 & 296.4 &       & 319.4 & 325.6 &       & 8.6   & 9.0   &       & 241.3 & 243.5 &       & 238.7 & 240.2 &       & -1.1  & -1.4 \\
          & 84    &       & 446.0 & 454.5 &       & 611.3 & 666.3 &       & 27.1  & 31.8  &       & 316.8 & 321.8 &       & 381.5 & 388.4 &       & 16.9  & 17.1  &       & 256.9 & 261.6 &       & 310.1 & 319.2 &       & 17.1  & 18.0 \\
          & 85    &       & 475.9 & 478.5 &       & 616.4 & 651.2 &       & 22.8  & 26.5  &       & 338.3 & 344.9 &       & 389.8 & 395.7 &       & 13.2  & 12.8  &       & 280.4 & 286.1 &       & 302.5 & 305.4 &       & 7.3   & 6.3 \\
          & 86    &       & 452.3 & 468.5 &       & 601.6 & 616.9 &       & 24.8  & 24.1  &       & 314.2 & 326.0 &       & 368.2 & 371.2 &       & 14.7  & 12.2  &       & 263.3 & 269.6 &       & 305.5 & 309.5 &       & 13.8  & 12.9 \\
          & 87    &       & 453.7 & 463.3 &       & 573.0 & 605.9 &       & 20.8  & 23.5  &       & 321.6 & 328.8 &       & 362.4 & 376.1 &       & 11.3  & 12.6  &       & 271.7 & 274.7 &       & 284.9 & 287.5 &       & 4.6   & 4.4 \\
          & 88    &       & 451.5 & 458.2 &       & 627.3 & 644.8 &       & 28.0  & 28.9  &       & 320.4 & 330.5 &       & 351.4 & 360.7 &       & 8.8   & 8.4   &       & 261.7 & 268.2 &       & 283.9 & 286.5 &       & 7.8   & 6.4 \\
          & 89    &       & 406.9 & 415.6 &       & 584.4 & 611.3 &       & 30.4  & 32.0  &       & 292.7 & 298.2 &       & 345.3 & 348.7 &       & 15.2  & 14.5  &       & 250.9 & 255.2 &       & 275.3 & 275.5 &       & 8.9   & 7.4 \\
          & 90    &       & 460.6 & 469.3 &       & 558.6 & 582.3 &       & 17.5  & 19.4  &       & 319.8 & 322.5 &       & 325.7 & 336.1 &       & 1.8   & 4.1   &       & 272.5 & 276.5 &       & 240.9 & 244.5 &       & -13.1 & -13.1 \\
    \midrule
    \multirow{10}[2]{*}{100} & 91    &       & 512.4 & 529.5 &       & 676.3 & 701.3 &       & 24.2  & 24.5  &       & 346.3 & 359.4 &       & 395.2 & 403.8 &       & 12.4  & 11.0  &       & 287.5 & 296.5 &       & 302.7 & 307.8 &       & 5.0   & 3.7 \\
          & 92    &       & 484.9 & 493.1 &       & 652.3 & 674.1 &       & 25.7  & 26.9  &       & 337.8 & 345.4 &       & 396.8 & 409.8 &       & 14.9  & 15.7  &       & 276.2 & 279.5 &       & 304.6 & 311.0 &       & 9.3   & 10.1 \\
          & 93    &       & 483.5 & 495.6 &       & 652.5 & 697.2 &       & 25.9  & 28.9  &       & 325.3 & 339.6 &       & 388.4 & 394.9 &       & 16.2  & 14.0  &       & 267.3 & 273.0 &       & 303.3 & 306.3 &       & 11.9  & 10.9 \\
          & 94    &       & 480.6 & 496.4 &       & 639.7 & 667.2 &       & 24.9  & 25.6  &       & 335.8 & 344.9 &       & 375.3 & 382.0 &       & 10.5  & 9.7   &       & 283.5 & 285.0 &       & 307.9 & 308.1 &       & 7.9   & 7.5 \\
          & 95    &       & 510.0 & 521.2 &       & 691.2 & 739.3 &       & 26.2  & 29.5  &       & 333.8 & 352.0 &       & 416.0 & 421.9 &       & 19.8  & 16.6  &       & 274.9 & 282.2 &       & 299.1 & 301.1 &       & 8.1   & 6.3 \\
          & 96    &       & 507.7 & 511.0 &       & 726.7 & 755.2 &       & 30.1  & 32.3  &       & 347.5 & 353.3 &       & 417.7 & 423.5 &       & 16.8  & 16.6  &       & 291.3 & 292.6 &       & 340.9 & 343.6 &       & 14.5  & 14.8 \\
          & 97    &       & 495.8 & 510.7 &       & 651.1 & 694.6 &       & 23.8  & 26.5  &       & 347.6 & 356.8 &       & 377.5 & 387.4 &       & 7.9   & 7.9   &       & 276.7 & 285.0 &       & 287.0 & 291.5 &       & 3.6   & 2.2 \\
          & 98    &       & 502.8 & 509.6 &       & 686.7 & 699.9 &       & 26.8  & 27.2  &       & 345.2 & 353.0 &       & 415.8 & 419.4 &       & 17.0  & 15.8  &       & 278.9 & 285.4 &       & 354.4 & 354.6 &       & 21.3  & 19.5 \\
          & 99    &       & 494.8 & 498.0 &       & 703.5 & 731.2 &       & 29.7  & 31.9  &       & 341.7 & 350.5 &       & 412.8 & 424.1 &       & 17.2  & 17.4  &       & 275.5 & 282.7 &       & 336.3 & 344.3 &       & 18.1  & 17.9 \\
          & 100   &       & 490.6 & 502.7 &       & 668.6 & 689.2 &       & 26.6  & 27.1  &       & 337.9 & 356.5 &       & 391.8 & 397.4 &       & 13.8  & 10.3  &       & 276.0 & 279.7 &       & 321.7 & 321.9 &       & 14.2  & 13.1 \\
    \midrule
    \multirow{10}[2]{*}{175} & 101   &       & 630.3 & 634.7 &       & 888.1 & 908.3 &       & 29.0  & 30.1  &       & 429.5 & 432.3 &       & 499.7 & 511.6 &       & 14.0  & 15.5  &       & 339.0 & 347.2 &       & 405.3 & 408.3 &       & 16.4  & 15.0 \\
          & 102   &       & 623.7 & 626.3 &       & 842.7 & 912.5 &       & 26.0  & 31.4  &       & 422.5 & 425.7 &       & 520.3 & 530.7 &       & 18.8  & 19.8  &       & 327.6 & 331.0 &       & 384.3 & 390.1 &       & 14.8  & 15.1 \\
          & 103   &       & 644.8 & 652.4 &       & 868.4 & 893.3 &       & 25.7  & 27.0  &       & 434.6 & 445.2 &       & 488.2 & 499.3 &       & 11.0  & 10.9  &       & 345.2 & 359.6 &       & 359.9 & 362.4 &       & 4.1   & 0.8 \\
          & 104   &       & 630.2 & 638.7 &       & 868.4 & 895.0 &       & 27.4  & 28.6  &       & 436.1 & 436.2 &       & 493.1 & 500.4 &       & 11.6  & 12.8  &       & 341.5 & 343.9 &       & 375.0 & 376.5 &       & 8.9   & 8.6 \\
          & 105   &       & 639.1 & 642.3 &       & 882.5 & 922.7 &       & 27.6  & 30.4  &       & 439.2 & 441.4 &       & 517.7 & 523.8 &       & 15.2  & 15.7  &       & 335.5 & 336.6 &       & 363.3 & 365.1 &       & 7.7   & 7.8 \\
          & 106   &       & 651.2 & 669.4 &       & 870.1 & 891.7 &       & 25.2  & 24.9  &       & 439.6 & 446.8 &       & 473.9 & 480.7 &       & 7.2   & 7.1   &       & 349.1 & 357.3 &       & 353.9 & 354.3 &       & 1.4   & -0.8 \\
          & 107   &       & 637.8 & 641.2 &       & 892.5 & 914.5 &       & 28.5  & 29.9  &       & 430.7 & 447.5 &       & 514.1 & 519.8 &       & 16.2  & 13.9  &       & 345.4 & 350.9 &       & 386.6 & 391.9 &       & 10.7  & 10.4 \\
          & 108   &       & 658.0 & 659.3 &       & 923.8 & 949.7 &       & 28.8  & 30.6  &       & 450.1 & 458.5 &       & 522.5 & 534.7 &       & 13.8  & 14.2  &       & 341.6 & 358.9 &       & 370.8 & 373.8 &       & 7.9   & 4.0 \\
          & 109   &       & 636.6 & 643.4 &       & 902.9 & 920.4 &       & 29.5  & 30.1  &       & 428.5 & 435.2 &       & 494.6 & 505.7 &       & 13.4  & 13.9  &       & 331.8 & 335.8 &       & 348.5 & 351.7 &       & 4.8   & 4.5 \\
          & 110   &       & 613.8 & 616.7 &       & 908.1 & 932.0 &       & 32.4  & 33.8  &       & 414.4 & 418.4 &       & 488.4 & 508.6 &       & 15.2  & 17.7  &       & 323.5 & 327.7 &       & 385.3 & 387.8 &       & 16.0  & 15.5 \\
    \midrule
    \multirow{10}[2]{*}{200} & 111   &       & 710.7 & 714.2 &       & 1052.4 & 1091.1 &       & 32.5  & 34.5  &       & 493.6 & 494.9 &       & 593.9 & 607.0 &       & 16.9  & 18.5  &       & 368.7 & 369.5 &       & 438.4 & 442.8 &       & 15.9  & 16.6 \\
          & 112   &       & 732.0 & 733.9 &       & 1047.3 & 1080.6 &       & 30.1  & 32.1  &       & 492.1 & 494.4 &       & 584.1 & 600.3 &       & 15.7  & 17.6  &       & 376.8 & 382.1 &       & 418.8 & 421.9 &       & 10.0  & 9.4 \\
          & 113   &       & 743.0 & 743.4 &       & 1096.0 & 1115.2 &       & 32.2  & 33.3  &       & 494.1 & 494.9 &       & 588.1 & 603.3 &       & 16.0  & 18.0  &       & 380.0 & 383.3 &       & 439.1 & 445.3 &       & 13.5  & 13.9 \\
          & 114   &       & 734.0 & 735.4 &       & 1056.2 & 1085.2 &       & 30.5  & 32.2  &       & 499.5 & 502.5 &       & 573.7 & 596.8 &       & 12.9  & 15.8  &       & 384.7 & 387.2 &       & 430.2 & 432.8 &       & 10.6  & 10.5 \\
          & 115   &       & 748.0 & 748.6 &       & 1080.4 & 1108.7 &       & 30.8  & 32.5  &       & 501.1 & 501.2 &       & 593.8 & 603.5 &       & 15.6  & 16.9  &       & 380.8 & 387.1 &       & 422.8 & 428.0 &       & 9.9   & 9.6 \\
          & 116   &       & 715.7 & 725.8 &       & 1037.5 & 1078.1 &       & 31.0  & 32.7  &       & 491.4 & 494.5 &       & 580.4 & 589.8 &       & 15.3  & 16.2  &       & 396.3 & 397.1 &       & 409.7 & 414.4 &       & 3.3   & 4.2 \\
          & 117   &       & 750.5 & 751.3 &       & 1120.8 & 1150.7 &       & 33.0  & 34.7  &       & 508.2 & 518.3 &       & 631.2 & 640.8 &       & 19.5  & 19.1  &       & 396.2 & 397.7 &       & 468.8 & 472.8 &       & 15.5  & 15.9 \\
          & 118   &       & 692.8 & 696.2 &       & 1019.2 & 1044.8 &       & 32.0  & 33.4  &       & 483.7 & 484.4 &       & 570.5 & 584.1 &       & 15.2  & 17.1  &       & 377.2 & 381.4 &       & 425.0 & 425.9 &       & 11.2  & 10.4 \\
          & 119   &       & 805.6 & 808.5 &       & 1086.9 & 1132.5 &       & 25.9  & 28.6  &       & 511.3 & 527.1 &       & 613.8 & 621.7 &       & 16.7  & 15.2  &       & 390.5 & 395.2 &       & 435.6 & 442.2 &       & 10.4  & 10.6 \\
          & 120   &       & 732.8 & 732.9 &       & 1087.8 & 1111.1 &       & 32.6  & 34.0  &       & 493.2 & 493.5 &       & 605.5 & 611.6 &       & 18.5  & 19.3  &       & 382.0 & 382.8 &       & 459.6 & 462.3 &       & 16.9  & 17.2 \\
    \midrule
    \multicolumn{2}{c}{\textbf{Average}} &       &       &       &       &       &       &       & \textbf{25.3} & \textbf{27.4} &       &       &       &       &       &       &       & \textbf{12.5} & \textbf{12.5} &       &       &       &       &       &       &       & \textbf{7.9} & \textbf{7.3} \\
    \bottomrule
    \end{tabular}%
    }
  \label{tab:TabBenchmarkGonazelezUniformFull}%
\end{table}%

\paragraph{Comparison with the \ac{ALNS} of \cite{WindrasMara2022AnDrops}}

We now compare the performance between \ac{MD-SPP-H} and \ac{ALNS} using one of the three datasets used by \cite{WindrasMara2022AnDrops} made available to us. This dataset, initially introduced by \cite{Ha2018OnDrone} for the single-drop scenario, comprises 60 benchmark instances with 50 and 100 customers. Instances named B1-B10, C1-C10, and D1-D10 contain 50 uniformly distributed customers across service areas spanning 100, 500, and 1,000 km$^2$, respectively. Additionally, instances labeled E1-B10, F1-C10, and G1-D10 consist of 100 uniformly distributed customers across the same area dimensions. Readers can refer to \cite{Ha2018OnDrone} for a more detailed description of these instances. It is important to notice that, for all instances, 20\% of the customers chosen randomly are designated as ineligible for drone delivery. We adjusted \ac{MD-SPP-H} to accommodate this additional constraint by restricting that all customers between node $i$ and node $k$ in Algorithm \ref{alg:1} must be drone-eligible. Further, truck distances are calculated based on the Manhattan metric, while drone distances are determined using the Euclidean metric. \cite{WindrasMara2022AnDrops} set both the truck and the drone speed to 40 km/h. The drone flight endurance is set to 24 minutes, and the time required for the truck to launch and retrieve the drone is 30 and 40 seconds, respectively. During the launch and retrieval phases, the drone is assumed to be in a non-flight mode, eliminating the need to factor this time into flight endurance consumption. Moreover, since \cite{WindrasMara2022AnDrops} assume that the drone load capacity is unlimited, we set $D=n$ in Algorithm \ref{alg:1}. Finally, to ensure a fair comparison and align with the computation times reported by \cite{WindrasMara2022AnDrops}, we use maximum runtimes of 60 and 90 seconds for instances with 50 and 100 customers, respectively. We further adopt the same number of independent runs (i.e., 10 runs). For a summary of our experiment settings, please refer to Table \ref{tab:ExperimentSetting} in \ref{app:supplementary_results_performance}. \\

\begin{table}[htbp]
  \centering
    \caption{Results of the comparison of \ac{MD-SPP-H} and \ac{ALNS}, considering 50 and 100 customers and all service area sizes of \citet{Ha2018OnDrone}.}       
    \renewcommand{\arraystretch}{1.1}
    \resizebox{\textwidth}{!}{
    \begin{tabular}{ccccccccccrrrccccccccccrr}
    \toprule
    \multicolumn{12}{c}{50 Customers }                                                            &       & \multicolumn{12}{c}{100 Customers } \\
\cmidrule{1-12}\cmidrule{14-25}    \multirow{2}[4]{*}{ID} &       & \multicolumn{3}{c}{MD-SPP-H} &       & \multicolumn{3}{c}{ALNS} &       & \multicolumn{2}{c}{$\Delta$} &       & \multirow{2}[4]{*}{ID} &       & \multicolumn{3}{c}{MD-SPP-H} &       & \multicolumn{3}{c}{ALNS} &       & \multicolumn{2}{c}{$\Delta$} \\
\cmidrule{3-5}\cmidrule{7-9}\cmidrule{11-12}\cmidrule{16-18}\cmidrule{20-22}\cmidrule{24-25}          &       & $z_\text{best}$ & $z_\text{avg}$ & $z_\text{std}$ &       & $z_\text{best}$ & $z_\text{avg}$ & $z_\text{std}$ &       & \multicolumn{1}{c}{$\Delta_\text{best}$} & \multicolumn{1}{c}{$\Delta_\text{avg}$} &       &       &       & $z_\text{best}$ & $z_\text{avg}$ & $z_\text{std}$ &       & $z_\text{best}$ & $z_\text{avg}$ & $z_\text{std}$ &       & \multicolumn{1}{c}{$\Delta_\text{best}$} & \multicolumn{1}{c}{$\Delta_\text{avg}$} \\
    \midrule
    mbB101 &       & \multicolumn{1}{r}{76.1} & \multicolumn{1}{r}{78.0} & \multicolumn{1}{r}{1.3} &       & \multicolumn{1}{r}{80.7} & \multicolumn{1}{r}{86.9} & \multicolumn{1}{r}{5.1} &       & 5.7   & 10.3  &       & mbE101 &       & \multicolumn{1}{r}{109.2} & \multicolumn{1}{r}{114.7} & \multicolumn{1}{r}{4.1} &       & \multicolumn{1}{r}{124.8} & \multicolumn{1}{r}{131.9} & \multicolumn{1}{r}{5.6} &       & 12.4  & 13.1 \\
    mbB102 &       & \multicolumn{1}{r}{81.0} & \multicolumn{1}{r}{84.6} & \multicolumn{1}{r}{1.8} &       & \multicolumn{1}{r}{75.3} & \multicolumn{1}{r}{79.3} & \multicolumn{1}{r}{2.9} &       & -7.7  & -6.7  &       & mbE102 &       & \multicolumn{1}{r}{119.0} & \multicolumn{1}{r}{121.7} & \multicolumn{1}{r}{1.6} &       & \multicolumn{1}{r}{122.4} & \multicolumn{1}{r}{130.4} & \multicolumn{1}{r}{5.2} &       & 2.8   & 6.7 \\
    mbB103 &       & \multicolumn{1}{r}{79.4} & \multicolumn{1}{r}{82.0} & \multicolumn{1}{r}{2.1} &       & \multicolumn{1}{r}{77.6} & \multicolumn{1}{r}{86.3} & \multicolumn{1}{r}{5.6} &       & -2.4  & 5.0   &       & mbE103 &       & \multicolumn{1}{r}{122.2} & \multicolumn{1}{r}{124.7} & \multicolumn{1}{r}{1.3} &       & \multicolumn{1}{r}{125.6} & \multicolumn{1}{r}{134.9} & \multicolumn{1}{r}{7.7} &       & 2.7   & 7.5 \\
    mbB104 &       & \multicolumn{1}{r}{77.2} & \multicolumn{1}{r}{78.4} & \multicolumn{1}{r}{1.2} &       & \multicolumn{1}{r}{76.0} & \multicolumn{1}{r}{85.7} & \multicolumn{1}{r}{4.7} &       & -1.6  & 8.5   &       & mbE104 &       & \multicolumn{1}{r}{119.5} & \multicolumn{1}{r}{122.7} & \multicolumn{1}{r}{2.2} &       & \multicolumn{1}{r}{119.5} & \multicolumn{1}{r}{131.6} & \multicolumn{1}{r}{6.8} &       & 0.0   & 6.8 \\
    mbB105 &       & \multicolumn{1}{r}{80.2} & \multicolumn{1}{r}{82.0} & \multicolumn{1}{r}{2.1} &       & \multicolumn{1}{r}{80.1} & \multicolumn{1}{r}{88.8} & \multicolumn{1}{r}{6.0} &       & -0.1  & 7.6   &       & mbE105 &       & \multicolumn{1}{r}{116.8} & \multicolumn{1}{r}{118.5} & \multicolumn{1}{r}{1.0} &       & \multicolumn{1}{r}{122.2} & \multicolumn{1}{r}{128.8} & \multicolumn{1}{r}{5.9} &       & 4.4   & 8.0 \\
    mbB106 &       & \multicolumn{1}{r}{74.0} & \multicolumn{1}{r}{76.8} & \multicolumn{1}{r}{1.2} &       & \multicolumn{1}{r}{74.9} & \multicolumn{1}{r}{81.8} & \multicolumn{1}{r}{5.4} &       & 1.3   & 6.2   &       & mbE106 &       & \multicolumn{1}{r}{118.9} & \multicolumn{1}{r}{121.2} & \multicolumn{1}{r}{1.4} &       & \multicolumn{1}{r}{120.3} & \multicolumn{1}{r}{128.1} & \multicolumn{1}{r}{4.2} &       & 1.2   & 5.4 \\
    mbB107 &       & \multicolumn{1}{r}{79.4} & \multicolumn{1}{r}{82.4} & \multicolumn{1}{r}{1.4} &       & \multicolumn{1}{r}{78.7} & \multicolumn{1}{r}{86.7} & \multicolumn{1}{r}{6.9} &       & -0.9  & 5.0   &       & mbE107 &       & \multicolumn{1}{r}{113.4} & \multicolumn{1}{r}{115.4} & \multicolumn{1}{r}{1.1} &       & \multicolumn{1}{r}{123.8} & \multicolumn{1}{r}{132.3} & \multicolumn{1}{r}{5.1} &       & 8.4   & 12.7 \\
    mbB108 &       & \multicolumn{1}{r}{75.8} & \multicolumn{1}{r}{77.6} & \multicolumn{1}{r}{1.3} &       & \multicolumn{1}{r}{81.9} & \multicolumn{1}{r}{85.6} & \multicolumn{1}{r}{3.0} &       & 7.4   & 9.3   &       & mbE108 &       & \multicolumn{1}{r}{116.7} & \multicolumn{1}{r}{117.7} & \multicolumn{1}{r}{1.2} &       & \multicolumn{1}{r}{115.0} & \multicolumn{1}{r}{126.5} & \multicolumn{1}{r}{8.0} &       & -1.5  & 7.0 \\
    mbB109 &       & \multicolumn{1}{r}{81.9} & \multicolumn{1}{r}{83.1} & \multicolumn{1}{r}{1.0} &       & \multicolumn{1}{r}{79.7} & \multicolumn{1}{r}{85.2} & \multicolumn{1}{r}{3.5} &       & -2.8  & 2.4   &       & mbE109 &       & \multicolumn{1}{r}{117.8} & \multicolumn{1}{r}{120.4} & \multicolumn{1}{r}{1.3} &       & \multicolumn{1}{r}{115.8} & \multicolumn{1}{r}{121.6} & \multicolumn{1}{r}{3.8} &       & -1.7  & 1.1 \\
    mbB110 &       & \multicolumn{1}{r}{76.3} & \multicolumn{1}{r}{78.1} & \multicolumn{1}{r}{1.7} &       & \multicolumn{1}{r}{75.1} & \multicolumn{1}{r}{89.1} & \multicolumn{1}{r}{5.5} &       & -1.6  & 12.3  &       & mbE110 &       & \multicolumn{1}{r}{119.0} & \multicolumn{1}{r}{121.2} & \multicolumn{1}{r}{1.0} &       & \multicolumn{1}{r}{120.2} & \multicolumn{1}{r}{128.8} & \multicolumn{1}{r}{5.7} &       & 1.0   & 5.9 \\
    mbC101 &       & \multicolumn{1}{r}{172.4} & \multicolumn{1}{r}{173.4} & \multicolumn{1}{r}{1.3} &       & \multicolumn{1}{r}{179.7} & \multicolumn{1}{r}{197.6} & \multicolumn{1}{r}{19.8} &       & 4.0   & 12.2  &       & mbF101 &       & \multicolumn{1}{r}{253.9} & \multicolumn{1}{r}{258.8} & \multicolumn{1}{r}{2.7} &       & \multicolumn{1}{r}{262.7} & \multicolumn{1}{r}{307.1} & \multicolumn{1}{r}{20.7} &       & 3.4   & 15.7 \\
    mbC102 &       & \multicolumn{1}{r}{163.6} & \multicolumn{1}{r}{164.2} & \multicolumn{1}{r}{0.9} &       & \multicolumn{1}{r}{171.2} & \multicolumn{1}{r}{192.4} & \multicolumn{1}{r}{19.7} &       & 4.5   & 14.6  &       & mbF102 &       & \multicolumn{1}{r}{243.3} & \multicolumn{1}{r}{248.6} & \multicolumn{1}{r}{5.2} &       & \multicolumn{1}{r}{266.6} & \multicolumn{1}{r}{313.1} & \multicolumn{1}{r}{24.3} &       & 8.7   & 20.6 \\
    mbC103 &       & \multicolumn{1}{r}{182.7} & \multicolumn{1}{r}{190.9} & \multicolumn{1}{r}{5.0} &       & \multicolumn{1}{r}{183.8} & \multicolumn{1}{r}{203.5} & \multicolumn{1}{r}{15.5} &       & 0.6   & 6.2   &       & mbF103 &       & \multicolumn{1}{r}{250.8} & \multicolumn{1}{r}{256.4} & \multicolumn{1}{r}{3.9} &       & \multicolumn{1}{r}{277.7} & \multicolumn{1}{r}{307.7} & \multicolumn{1}{r}{22.3} &       & 9.7   & 16.7 \\
    mbC104 &       & \multicolumn{1}{r}{175.4} & \multicolumn{1}{r}{177.3} & \multicolumn{1}{r}{1.3} &       & \multicolumn{1}{r}{188.1} & \multicolumn{1}{r}{199.9} & \multicolumn{1}{r}{11.3} &       & 6.8   & 11.3  &       & mbF104 &       & \multicolumn{1}{r}{244.9} & \multicolumn{1}{r}{249.2} & \multicolumn{1}{r}{2.2} &       & \multicolumn{1}{r}{281.3} & \multicolumn{1}{r}{308.1} & \multicolumn{1}{r}{20.5} &       & 12.9  & 19.1 \\
    mbC105 &       & \multicolumn{1}{r}{196.9} & \multicolumn{1}{r}{198.0} & \multicolumn{1}{r}{0.8} &       & \multicolumn{1}{r}{202.6} & \multicolumn{1}{r}{216.4} & \multicolumn{1}{r}{10.6} &       & 2.8   & 8.5   &       & mbF105 &       & \multicolumn{1}{r}{262.5} & \multicolumn{1}{r}{265.3} & \multicolumn{1}{r}{1.1} &       & \multicolumn{1}{r}{274.7} & \multicolumn{1}{r}{304.4} & \multicolumn{1}{r}{14.5} &       & 4.5   & 12.9 \\
    mbC106 &       & \multicolumn{1}{r}{195.3} & \multicolumn{1}{r}{197.2} & \multicolumn{1}{r}{2.4} &       & \multicolumn{1}{r}{204.5} & \multicolumn{1}{r}{228.7} & \multicolumn{1}{r}{17.4} &       & 4.5   & 13.8  &       & mbF106 &       & \multicolumn{1}{r}{236.7} & \multicolumn{1}{r}{239.5} & \multicolumn{1}{r}{2.1} &       & \multicolumn{1}{r}{239.5} & \multicolumn{1}{r}{291.6} & \multicolumn{1}{r}{25.5} &       & 1.2   & 17.9 \\
    mbC107 &       & \multicolumn{1}{r}{177.1} & \multicolumn{1}{r}{180.7} & \multicolumn{1}{r}{3.0} &       & \multicolumn{1}{r}{187.7} & \multicolumn{1}{r}{213.2} & \multicolumn{1}{r}{13.6} &       & 5.6   & 15.2  &       & mbF107 &       & \multicolumn{1}{r}{234.2} & \multicolumn{1}{r}{237.7} & \multicolumn{1}{r}{2.2} &       & \multicolumn{1}{r}{279.0} & \multicolumn{1}{r}{319.8} & \multicolumn{1}{r}{32.5} &       & 16.1  & 25.7 \\
    mbC108 &       & \multicolumn{1}{r}{198.2} & \multicolumn{1}{r}{201.8} & \multicolumn{1}{r}{1.7} &       & \multicolumn{1}{r}{199.6} & \multicolumn{1}{r}{218.0} & \multicolumn{1}{r}{9.5} &       & 0.7   & 7.4   &       & mbF108 &       & \multicolumn{1}{r}{249.4} & \multicolumn{1}{r}{254.2} & \multicolumn{1}{r}{3.8} &       & \multicolumn{1}{r}{269.2} & \multicolumn{1}{r}{290.9} & \multicolumn{1}{r}{29.3} &       & 7.3   & 12.6 \\
    mbC109 &       & \multicolumn{1}{r}{190.1} & \multicolumn{1}{r}{193.2} & \multicolumn{1}{r}{2.2} &       & \multicolumn{1}{r}{212.3} & \multicolumn{1}{r}{225.3} & \multicolumn{1}{r}{9.1} &       & 10.4  & 14.2  &       & mbF109 &       & \multicolumn{1}{r}{253.6} & \multicolumn{1}{r}{254.4} & \multicolumn{1}{r}{0.5} &       & \multicolumn{1}{r}{284.5} & \multicolumn{1}{r}{319.3} & \multicolumn{1}{r}{33.1} &       & 10.9  & 20.3 \\
    mbC110 &       & \multicolumn{1}{r}{183.3} & \multicolumn{1}{r}{187.8} & \multicolumn{1}{r}{4.0} &       & \multicolumn{1}{r}{195.2} & \multicolumn{1}{r}{222.3} & \multicolumn{1}{r}{14.6} &       & 6.1   & 15.5  &       & mbF110 &       & \multicolumn{1}{r}{250.3} & \multicolumn{1}{r}{256.3} & \multicolumn{1}{r}{3.2} &       & \multicolumn{1}{r}{258.9} & \multicolumn{1}{r}{304.0} & \multicolumn{1}{r}{24.6} &       & 3.3   & 15.7 \\
    mbD101 &       & \multicolumn{1}{r}{269.8} & \multicolumn{1}{r}{274.1} & \multicolumn{1}{r}{3.7} &       & \multicolumn{1}{r}{293.2} & \multicolumn{1}{r}{315.0} & \multicolumn{1}{r}{24.1} &       & 8.0   & 13.0  &       & mbG101 &       & \multicolumn{1}{r}{343.2} & \multicolumn{1}{r}{349.5} & \multicolumn{1}{r}{4.0} &       & \multicolumn{1}{r}{395.1} & \multicolumn{1}{r}{448.5} & \multicolumn{1}{r}{53.2} &       & 13.1  & 22.1 \\
    mbD102 &       & \multicolumn{1}{r}{270.5} & \multicolumn{1}{r}{273.1} & \multicolumn{1}{r}{2.2} &       & \multicolumn{1}{r}{305.9} & \multicolumn{1}{r}{323.5} & \multicolumn{1}{r}{15.5} &       & 11.6  & 15.6  &       & mbG102 &       & \multicolumn{1}{r}{332.7} & \multicolumn{1}{r}{337.1} & \multicolumn{1}{r}{3.6} &       & \multicolumn{1}{r}{389.4} & \multicolumn{1}{r}{422.8} & \multicolumn{1}{r}{16.9} &       & 14.6  & 20.3 \\
    mbD103 &       & \multicolumn{1}{r}{251.4} & \multicolumn{1}{r}{252.8} & \multicolumn{1}{r}{1.6} &       & \multicolumn{1}{r}{271.3} & \multicolumn{1}{r}{307.9} & \multicolumn{1}{r}{22.7} &       & 7.3   & 17.9  &       & mbG103 &       & \multicolumn{1}{r}{369.9} & \multicolumn{1}{r}{378.9} & \multicolumn{1}{r}{5.5} &       & \multicolumn{1}{r}{431.8} & \multicolumn{1}{r}{488.2} & \multicolumn{1}{r}{29.9} &       & 14.3  & 22.4 \\
    mbD104 &       & \multicolumn{1}{r}{286.2} & \multicolumn{1}{r}{291.0} & \multicolumn{1}{r}{2.5} &       & \multicolumn{1}{r}{307.1} & \multicolumn{1}{r}{339.3} & \multicolumn{1}{r}{21.1} &       & 6.8   & 14.2  &       & mbG104 &       & \multicolumn{1}{r}{362.9} & \multicolumn{1}{r}{367.8} & \multicolumn{1}{r}{2.9} &       & \multicolumn{1}{r}{399.3} & \multicolumn{1}{r}{453.6} & \multicolumn{1}{r}{42.7} &       & 9.1   & 18.9 \\
    mbD105 &       & \multicolumn{1}{r}{281.7} & \multicolumn{1}{r}{283.6} & \multicolumn{1}{r}{2.2} &       & \multicolumn{1}{r}{287.7} & \multicolumn{1}{r}{316.8} & \multicolumn{1}{r}{23.9} &       & 2.1   & 10.5  &       & mbG105 &       & \multicolumn{1}{r}{360.1} & \multicolumn{1}{r}{364.5} & \multicolumn{1}{r}{2.2} &       & \multicolumn{1}{r}{418.1} & \multicolumn{1}{r}{456.9} & \multicolumn{1}{r}{29.8} &       & 13.9  & 20.2 \\
    mbD106 &       & \multicolumn{1}{r}{271.8} & \multicolumn{1}{r}{277.7} & \multicolumn{1}{r}{6.9} &       & \multicolumn{1}{r}{289.9} & \multicolumn{1}{r}{316.4} & \multicolumn{1}{r}{20.0} &       & 6.3   & 12.2  &       & mbG106 &       & \multicolumn{1}{r}{348.7} & \multicolumn{1}{r}{354.9} & \multicolumn{1}{r}{3.6} &       & \multicolumn{1}{r}{391.6} & \multicolumn{1}{r}{462.7} & \multicolumn{1}{r}{45.3} &       & 11.0  & 23.3 \\
    mbD107 &       & \multicolumn{1}{r}{283.0} & \multicolumn{1}{r}{283.6} & \multicolumn{1}{r}{1.1} &       & \multicolumn{1}{r}{297.0} & \multicolumn{1}{r}{322.7} & \multicolumn{1}{r}{25.2} &       & 4.7   & 12.1  &       & mbG107 &       & \multicolumn{1}{r}{341.1} & \multicolumn{1}{r}{345.2} & \multicolumn{1}{r}{2.7} &       & \multicolumn{1}{r}{384.8} & \multicolumn{1}{r}{442.6} & \multicolumn{1}{r}{29.4} &       & 11.3  & 22.0 \\
    mbD108 &       & \multicolumn{1}{r}{249.6} & \multicolumn{1}{r}{257.5} & \multicolumn{1}{r}{5.5} &       & \multicolumn{1}{r}{278.6} & \multicolumn{1}{r}{311.9} & \multicolumn{1}{r}{25.3} &       & 10.4  & 17.4  &       & mbG108 &       & \multicolumn{1}{r}{351.6} & \multicolumn{1}{r}{357.8} & \multicolumn{1}{r}{3.3} &       & \multicolumn{1}{r}{409.0} & \multicolumn{1}{r}{459.0} & \multicolumn{1}{r}{37.2} &       & 14.0  & 22.1 \\
    mbD109 &       & \multicolumn{1}{r}{287.5} & \multicolumn{1}{r}{290.2} & \multicolumn{1}{r}{2.6} &       & \multicolumn{1}{r}{300.1} & \multicolumn{1}{r}{327.6} & \multicolumn{1}{r}{16.8} &       & 4.2   & 11.4  &       & mbG109 &       & \multicolumn{1}{r}{351.9} & \multicolumn{1}{r}{356.1} & \multicolumn{1}{r}{2.7} &       & \multicolumn{1}{r}{405.9} & \multicolumn{1}{r}{468.9} & \multicolumn{1}{r}{34.1} &       & 13.3  & 24.0 \\
    mbD110 &       & \multicolumn{1}{r}{261.4} & \multicolumn{1}{r}{262.2} & \multicolumn{1}{r}{0.6} &       & \multicolumn{1}{r}{265.3} & \multicolumn{1}{r}{281.9} & \multicolumn{1}{r}{13.9} &       & 1.5   & 7.0   &       & mbG110 &       & \multicolumn{1}{r}{350.5} & \multicolumn{1}{r}{358.2} & \multicolumn{1}{r}{4.4} &       & \multicolumn{1}{r}{419.9} & \multicolumn{1}{r}{472.6} & \multicolumn{1}{r}{36.3} &       & 16.5  & 24.2 \\
    \midrule
    \textbf{Average} &       &       &       &       &       &       &       &       &       & \textbf{3.5} & \textbf{10.3} &       &  &       &       &       &       &       &       &       &       &       & \textbf{8.0} & \textbf{15.7} \\
    \bottomrule
    \end{tabular}%
    }
\label{tab:TabBenchmarkMara}
\end{table}%

Table \ref{tab:TabBenchmarkMara} reports the results for instances with 50 and 100 customers and different area sizes. Overall, \ac{MD-SPP-H} improves the best solution found by \ac{ALNS} in 51 out of 60 instances, with $\Delta_\text{best} =3.5\%$ for instances with 50 customers and $\Delta_\text{best} = 8.0\%$ for instances with 100 customers. The improvements are even more pronounced for the average solution values, with average improvements of $\Delta_\text{avg} = 10.3\%$ and $\Delta_\text{avg} = 15.7\%$ for instances with 50 and 100 customers, respectively. This behavior is because \ac{MD-SPP-H} produces more consistent results than \ac{ALNS} in terms of standard deviation values ($z_\text{std}$) across the ten independent runs. For example, for the medium area size instances C and F, \ac{ALNS} reports standard deviations of 14.1 and 24.7 (time units) for instances of 50 and 100 customers, respectively. In comparison, \ac{MD-SPP-H} exhibits much lower standard deviations of 2.2 and 2.7 (time units). Finally, our results show a tendency for greater improvements for larger service areas, particularly in instances D and G (i.e., 1,000 km$^2$).

\subsection{Comparison with state-of-the-art solution approaches for the FSTSP}
\label{sec:benchmarkFSTSP}

To further analyze the performance of \ac{MD-SPP-H}, we perform an additional benchmark with three solution approaches tailored for the \ac{FSTSP} (i.e., the single-drop scenario): the established EP-All heuristic of \cite{Agatz2018OptimizationDrone}, the SPP-All heuristic of \cite{Kundu2021AnProblem}, and the integer L-shaped method of \cite{Vasquez2021AnDecomposition}.
All comparisons are based on the uniform instances of \cite{Agatz2018OptimizationDrone} (extended by \cite{Vasquez2021AnDecomposition} for problem sizes of $n \in \{11,..., 20\}$ customers). For the comparison with EP-All and SPP-All, we use the results that are available on \cite{Kundu2020ResultsHeuristic}, while the best-known values provided by \cite{Vasquez2021AnDecomposition} are used to compare \ac{MD-SPP-H} with their integer L-shaped method. In all benchmarks, the drone’s speed is assumed to be double that of the truck, and the drone's maximum flight endurance is considered unlimited. We summarize the experiment setting in Table \ref{tab:ExperimentSetting} in \ref{app:supplementary_results_performance}.

\paragraph{Comparison with EP-All and SPP-All} Our comparative analysis with the heuristics developed by \citet{Agatz2018OptimizationDrone} and \citet{Kundu2021AnProblem} is summarized in Table \ref{tab:TabBenchmarkAgatzKunduUniform} for all instances with uniform customer distribution. We refer the reader to Tables \ref{tab:TabBenchmarkAgatzKunduSC} and \ref{tab:TabBenchmarkAgatzKunduDC} in \ref{app:onlieB} for corresponding results for single-center and double-center customer distribution. 

\begin{table}[htbp]
    \centering
  \caption{Results of the comparison of \ac{MD-SPP-H} and \ac{EP-All} and \ac{SPP-All} for various customers ranging from $n=50$ to $250$ and uniform customer distributions.} 
    \renewcommand{\arraystretch}{1.1}
    \scalebox{0.6}{
    \begin{tabular}{ccrrrrrrrrrrrrrr}
    \toprule
    \multirow{2}[4]{*}{$n$} & \multirow{2}[4]{*}{ID} &       & \multicolumn{3}{c}{MD-SPP-H} &       & \multicolumn{4}{c}{EP-All}    &       & \multicolumn{4}{c}{SPP-All} \\
\cmidrule{4-6}\cmidrule{8-11}\cmidrule{13-16}          &       &       & \multicolumn{1}{c}{$z_\text{best}$} & \multicolumn{1}{c}{$z_\text{avg}$} & \multicolumn{1}{c}{Avg. runtime (s)} &       & \multicolumn{1}{c}{$z$} & \multicolumn{1}{c}{Runtime (s)} & \multicolumn{1}{c}{$\Delta_\text{best}$} & \multicolumn{1}{c}{$\Delta_\text{avg}$} &       & \multicolumn{1}{c}{$z$} & \multicolumn{1}{c}{Runtime (s)} & \multicolumn{1}{c}{$\Delta_\text{best}$} & \multicolumn{1}{c}{$\Delta_\text{avg}$} \\
    \midrule
    \multirow{10}[2]{*}{50} & \multicolumn{1}{r}{71} &       & 387.7 & 390.8 & 4.6   &       & 393.7 & 2.0   & 1.5   & 0.7   &       & 389.9 & 0.0   & 0.6   & -0.2 \\
          & \multicolumn{1}{r}{72} &       & 420.6 & 425.8 & 5.1   &       & 454.7 & 2.7   & 7.5   & 6.3   &       & 454.7 & 0.0   & 7.5   & 6.3 \\
          & \multicolumn{1}{r}{73} &       & 393.8 & 396.5 & 4.3   &       & 410.6 & 2.5   & 4.1   & 3.4   &       & 409.8 & 0.1   & 3.9   & 3.2 \\
          & \multicolumn{1}{r}{74} &       & 409.0 & 412.0 & 3.9   &       & 422.6 & 2.2   & 3.2   & 2.5   &       & 420.5 & 0.1   & 2.7   & 2.0 \\
          & \multicolumn{1}{r}{75} &       & 413.0 & 416.3 & 6.2   &       & 441.0 & 1.9   & 6.4   & 5.6   &       & 441.0 & 0.1   & 6.4   & 5.6 \\
          & \multicolumn{1}{r}{76} &       & 370.7 & 374.8 & 4.1   &       & 376.1 & 2.5   & 1.4   & 0.3   &       & 376.1 & 0.0   & 1.4   & 0.3 \\
          & \multicolumn{1}{r}{77} &       & 415.8 & 419.9 & 4.7   &       & 439.6 & 2.2   & 5.4   & 4.5   &       & 434.2 & 0.1   & 4.2   & 3.3 \\
          & \multicolumn{1}{r}{78} &       & 410.5 & 422.0 & 5.8   &       & 438.2 & 2.1   & 6.3   & 3.7   &       & 438.2 & 0.0   & 6.3   & 3.7 \\
          & \multicolumn{1}{r}{79} &       & 374.4 & 380.0 & 4.6   &       & 397.3 & 2.1   & 5.8   & 4.4   &       & 397.3 & 0.0   & 5.8   & 4.4 \\
          & \multicolumn{1}{r}{80} &       & 361.8 & 364.2 & 4.8   &       & 376.0 & 2.7   & 3.8   & 3.1   &       & 376.0 & 0.0   & 3.8   & 3.1 \\
    \midrule
    \multirow{10}[2]{*}{75} & \multicolumn{1}{r}{81} &       & 459.1 & 465.9 & 20.0  &       & 476.6 & 10.6  & 3.7   & 2.2   &       & 470.4 & 0.2   & 2.4   & 0.9 \\
          & \multicolumn{1}{r}{82} &       & 432.9 & 441.8 & 24.9  &       & 454.4 & 14.1  & 4.7   & 2.8   &       & 454.4 & 0.2   & 4.7   & 2.8 \\
          & \multicolumn{1}{r}{83} &       & 435.2 & 442.7 & 17.1  &       & 456.2 & 8.1   & 4.6   & 3.0   &       & 456.1 & 0.2   & 4.6   & 2.9 \\
          & \multicolumn{1}{r}{84} &       & 463.1 & 469.2 & 27.6  &       & 467.8 & 14.5  & 1.0   & -0.3  &       & 463.8 & 0.2   & 0.1   & -1.2 \\
          & \multicolumn{1}{r}{85} &       & 478.4 & 486.8 & 31.5  &       & 506.9 & 11.3  & 5.6   & 4.0   &       & 499.2 & 0.3   & 4.2   & 2.5 \\
          & \multicolumn{1}{r}{86} &       & 451.5 & 460.4 & 26.7  &       & 493.7 & 13.5  & 8.5   & 6.7   &       & 493.0 & 0.3   & 8.4   & 6.6 \\
          & \multicolumn{1}{r}{87} &       & 460.3 & 464.6 & 22.9  &       & 475.1 & 12.3  & 3.1   & 2.2   &       & 470.8 & 0.3   & 2.2   & 1.3 \\
          & \multicolumn{1}{r}{88} &       & 453.9 & 460.4 & 28.9  &       & 520.2 & 9.1   & 12.7  & 11.5  &       & 517.9 & 0.2   & 12.4  & 11.1 \\
          & \multicolumn{1}{r}{89} &       & 421.8 & 426.5 & 24.2  &       & 467.0 & 9.4   & 9.7   & 8.7   &       & 467.0 & 0.2   & 9.7   & 8.7 \\
          & \multicolumn{1}{r}{90} &       & 452.5 & 457.2 & 18.0  &       & 470.0 & 10.8  & 3.7   & 2.7   &       & 468.9 & 0.2   & 3.5   & 2.5 \\
    \midrule
    \multirow{10}[2]{*}{100} & \multicolumn{1}{r}{91} &       & 547.4 & 558.9 & 78.1  &       & 575.7 & 68.6  & 4.9   & 2.9   &       & 569.7 & 1.7   & 3.9   & 1.9 \\
          & \multicolumn{1}{r}{92} &       & 487.4 & 498.5 & 79.7  &       & 529.3 & 30.0  & 7.9   & 5.8   &       & 523.6 & 1.7   & 6.9   & 4.8 \\
          & \multicolumn{1}{r}{93} &       & 491.2 & 503.5 & 71.2  &       & 547.0 & 51.2  & 10.2  & 8.0   &       & 535.6 & 1.0   & 8.3   & 6.0 \\
          & \multicolumn{1}{r}{94} &       & 512.2 & 526.6 & 71.3  &       & 543.8 & 73.0  & 5.8   & 3.2   &       & 543.8 & 1.6   & 5.8   & 3.2 \\
          & \multicolumn{1}{r}{95} &       & 528.3 & 541.0 & 86.0  &       & 591.7 & 50.7  & 10.7  & 8.6   &       & 591.1 & 1.6   & 10.6  & 8.5 \\
          & \multicolumn{1}{r}{96} &       & 546.5 & 553.3 & 56.9  &       & 558.0 & 78.0  & 2.1   & 0.8   &       & 556.1 & 1.4   & 1.7   & 0.5 \\
          & \multicolumn{1}{r}{97} &       & 554.4 & 565.2 & 71.2  &       & 592.8 & 67.5  & 6.5   & 4.7   &       & 590.5 & 1.7   & 6.1   & 4.3 \\
          & \multicolumn{1}{r}{98} &       & 514.7 & 520.4 & 50.6  &       & 517.0 & 38.8  & 0.5   & -0.7  &       & 516.8 & 0.7   & 0.4   & -0.7 \\
          & \multicolumn{1}{r}{99} &       & 536.9 & 546.2 & 60.8  &       & 574.5 & 36.7  & 6.5   & 4.9   &       & 558.7 & 0.7   & 3.9   & 2.2 \\
          & \multicolumn{1}{r}{100} &       & 516.5 & 530.8 & 89.2  &       & 578.7 & 33.0  & 10.7  & 8.3   &       & 577.3 & 0.7   & 10.5  & 8.0 \\
    \midrule
    \multirow{10}[2]{*}{175} & \multicolumn{1}{r}{101} &       & 684.0 & 699.2 & 463.9 &       & 734.9 & 452.0 & 6.9   & 4.9   &       & 730.4 & 7.9   & 6.3   & 4.3 \\
          & \multicolumn{1}{r}{102} &       & 683.1 & 704.7 & 542.8 &       & 728.3 & 418.1 & 6.2   & 3.2   &       & 727.8 & 15.8  & 6.1   & 3.2 \\
          & \multicolumn{1}{r}{103} &       & 674.3 & 697.3 & 660.1 &       & 727.0 & 985.2 & 7.2   & 4.1   &       & 725.8 & 16.6  & 7.1   & 3.9 \\
          & \multicolumn{1}{r}{104} &       & 657.3 & 670.2 & 509.8 &       & 703.6 & 993.3 & 6.6   & 4.8   &       & 685.6 & 17.5  & 4.1   & 2.2 \\
          & \multicolumn{1}{r}{105} &       & 695.8 & 708.3 & 459.3 &       & 726.9 & 813.8 & 4.3   & 2.6   &       & 725.6 & 8.0   & 4.1   & 2.4 \\
          & \multicolumn{1}{r}{106} &       & 678.1 & 689.4 & 266.3 &       & 691.7 & 786.4 & 2.0   & 0.3   &       & 688.5 & 16.7  & 1.5   & -0.1 \\
          & \multicolumn{1}{r}{107} &       & 654.1 & 659.0 & 430.0 &       & 661.4 & 654.3 & 1.1   & 0.4   &       & 653.7 & 7.6   & -0.1  & -0.8 \\
          & \multicolumn{1}{r}{108} &       & 677.4 & 689.6 & 749.1 &       & 728.4 & 383.6 & 7.0   & 5.3   &       & 718.7 & 6.8   & 5.7   & 4.0 \\
          & \multicolumn{1}{r}{109} &       & 686.2 & 705.6 & 495.3 &       & 716.9 & 377.7 & 4.3   & 1.6   &       & 715.6 & 8.0   & 4.1   & 1.4 \\
          & \multicolumn{1}{r}{110} &       & 637.0 & 644.2 & 314.8 &       & 654.1 & 370.1 & 2.6   & 1.5   &       & 650.0 & 8.8   & 2.0   & 0.9 \\
    \midrule
    \multirow{10}[2]{*}{250} & \multicolumn{1}{r}{111} &       & 813.2 & 822.7 & 1480.1 &       & 822.0 & 2944.8 & 1.1   & -0.1  &       & 820.2 & 35.3  & 0.9   & -0.3 \\
          & \multicolumn{1}{r}{112} &       & 804.9 & 812.4 & 987.4 &       & 806.3 & 2534.2 & 0.2   & -0.8  &       & 802.0 & 31.4  & -0.4  & -1.3 \\
          & \multicolumn{1}{r}{113} &       & 795.3 & 814.2 & 1431.2 &       & 820.9 & 2475.7 & 3.1   & 0.8   &       & 818.9 & 34.6  & 2.9   & 0.6 \\
          & \multicolumn{1}{r}{114} &       & 805.8 & 817.9 & 1135.6 &       & 830.9 & 2749.9 & 3.0   & 1.6   &       & 830.6 & 36.7  & 3.0   & 1.5 \\
          & \multicolumn{1}{r}{115} &       & 818.0 & 830.3 & 1384.5 &       & 845.3 & 4871.8 & 3.2   & 1.8   &       & 844.6 & 37.3  & 3.2   & 1.7 \\
          & \multicolumn{1}{r}{116} &       & 807.7 & 819.4 & 1672.9 &       & 831.1 & 1969.4 & 2.8   & 1.4   &       & 826.4 & 37.9  & 2.3   & 0.8 \\
          & \multicolumn{1}{r}{117} &       & 823.0 & 828.4 & 1185.1 &       & 840.9 & 2255.2 & 2.1   & 1.5   &       & 840.9 & 32.4  & 2.1   & 1.5 \\
          & \multicolumn{1}{r}{118} &       & 761.3 & 774.2 & 1574.0 &       & 790.1 & 2191.0 & 3.6   & 2.0   &       & 780.2 & 33.3  & 2.4   & 0.8 \\
          & \multicolumn{1}{r}{119} &       & 833.9 & 851.8 & 1766.0 &       & 871.4 & 4910.6 & 4.3   & 2.3   &       & 868.8 & 85.3  & 4.0   & 2.0 \\
          & \multicolumn{1}{r}{120} &       & 789.0 & 801.9 & 1358.6 &       & 811.0 & 4684.4 & 2.7   & 1.1   &       & 805.2 & 69.7  & 2.0   & 0.4 \\
    \midrule
    \multicolumn{2}{c}{\textbf{Average}} &       &       &       &       &       &       &       & \textbf{4.9} & \textbf{3.3} &       &       &       & \textbf{4.3} & \textbf{2.8} \\
    \bottomrule
    \end{tabular}%
    }
\label{tab:TabBenchmarkAgatzKunduUniform}
\end{table}%

First, the results clearly demonstrate that \ac{MD-SPP-H} significantly improves solution quality compared to EP-All, with similar or reduced runtimes. The reduction in runtime is particularly noticeable for large instances with 175 and 250 customers.
In terms of solution quality, \ac{MD-SPP-H} improves the best-found solution value by $\Delta_\text{best} = 4.9\%$, $3.9\%$, and $3.6\%$ (average over the 50 instances) for uniform, single-center and double-center customer distribution, respectively. The average solution value reached by \ac{MD-SPP-H} improves the solution found by EP-All by $\Delta_\text{avg} = 3.3\%$, $2.2\%$, and $2.1\%$ (average over the 50 instances). 

Second, we compare our results with the SPP-All heuristic of \cite{Kundu2021AnProblem}. The notable advancement in their study, compared to \cite{Agatz2018OptimizationDrone}, is the significant reduction in computation time while maintaining or slightly improving solution quality. For instance, SPP-All requires an average runtime of 43.4 seconds for instances with 250 customers. Regarding solution quality, \cite{Kundu2021AnProblem} report achieving an overall mean improvement of 0.77\% over EP-All (in contrast, \ac{MD-SPP-H} improves the overall solution quality by $\Delta_\text{best} = 4.9\%$ and $\Delta_\text{avg} = 3.3\%$). Our result tables show that \ac{MD-SPP-H} improves the best solution values in 48 out of 50 instances, with $\Delta_\text{best} = 4.3\%$, $2.7\%$ and $3.1\%$ (average over the 50 instances) for uniform, single-center and double-center distributions. 
The average solution value reached by \ac{MD-SPP-H} improves the best-known solution found by SPP-All by $\Delta_\text{avg} = 2.8\%, 0.9\%$ and 1.5\% for uniform, single-center and double-center distributions, respectively.

Regarding runtimes, \ac{MD-SPP-H} requires an average of five seconds for instances with 50 uniformly distributed customers, while an average of 71 seconds for instances with 100 customers. For instances with 250 customers, the average runtime of \ac{MD-SPP-H} is around 23 minutes. While these runtimes significantly exceed those achieved by \ac{SPP-All}, they remain reasonable when considering the \ac{FSTSP-MD}'s inherent complexity.
The reader can refer to Figure \ref{fig:FigBenchmarkFSTSPImpTrajectoryBest} in \ref{app:supplementary_results_performance} for additional insights into the relative percentage difference (over time) between the best-so-far solution and the best overall solution found by \ac{MD-SPP-H}. For example, for instances with 250 customers, the solutions found by \ac{MD-SPP-H} after 10 minutes are within 4\% of the best overall solutions. For real-world applications, this implies that last-mile logistics managers could execute \ac{MD-SPP-H} for just a couple of minutes and get high-quality solutions. 

\paragraph{Comparison with the integer L-shaped method of \cite{Vasquez2021AnDecomposition}} Lastly, we compare \ac{MD-SPP-H} with the integer L-shaped method developed by \cite{Vasquez2021AnDecomposition}. 
This exact method finds optimal solutions for \ac{FSTSP} instances with up to 20 customers. Table \ref{tab:comparison_vasquez} in \ref{app:supplementary_results_performance} reports the comparison using the best-known values provided by \cite{Vasquez2021AnDecomposition}. Overall, \ac{MD-SPP-H} solves to optimality 73 out of the 88 instances that the integer L-shaped method solves to optimality. Further, the average runtime of \ac{MD-SPP-H} is less than half a second. Finally, for instances with more than 17 customers, \ac{MD-SPP-H} improves the best solutions found by the integer L-shaped method by 1.4\%.

In summary, although designed for the multi-drop scenario, \ac{MD-SPP-H} marks a substantial step forward in improving solution quality for the \ac{FSTSP} while maintaining practical runtimes.

\section{Impact of Individual Parameters and Their Interaction on the Delivery System Performance} \label{sec:managerial_insights}

In Section \ref{sec:heuristic_performance}, we showed the superior performance of \ac{MD-SPP-H} compared to state-of-the-art solution approaches for both the \ac{FSTSP-MD} and the \ac{FSTSP}. In this section, we leverage our proposed solution approach to improve our understanding of collaborative truck-and-drone delivery systems for last-mile logistics. 

For the analyses presented in this section, we rely on the well-established \ac{FSTSP} instances from \citet{Agatz2018OptimizationDrone}. We replicate the conditions in their study by adhering to the following assumptions. For all scenarios, both the truck and the drone travel the Euclidean distance between the nodes, and the truck travels at a unit speed. Customer service times and drone launching and recovery times are negligible \citep[see also, e.g.,][]{Gonzalez-R2020Truck-dronePlanning}. Additionally, we conduct experiments on real-world routing data, which are presented in \ref{app:realworld}. In contrast to the stylized dataset, the real-world routing dataset includes actual customer distributions and average travel times for the truck based on the road network. Noteworthy, the results presented in \ref{app:realworld} for real-world instances closely align with those obtained for the stylized instances of \cite{Agatz2018OptimizationDrone} analyzed in this section.

Throughout this section, we use the parameter values of $\eta = 10$ and $p = 0.1$ (used in the perturbation phase of \ac{MD-SPP-H}) and the computational setting described in Section \ref{sec:heuristic_performance}.

\subsection{Design of Numerical Experiments}
\label{sec:num_exp}

\paragraph{Baseline scenario}
We establish a baseline scenario following the one chosen by \cite{Agatz2018OptimizationDrone} for the single-drop case. We select instances with 100 uniformly distributed customers within the service area. The drone travels at twice the speed of the truck, and we consider an unlimited drone flight endurance. We set the baseline maximum number of drone drops to $D = 2$. 

\paragraph{Scenarios of analysis}
Building on this baseline, we analyze the effects of varying the following input parameters: the maximum number of drone drops, the drone speed ratio (compared to the truck speed), the drone's maximum flight endurance, and the number and geographical distribution of the customers. In total, our experiment design gives rise to 900 scenarios, i.e., unique parameter combinations. For each scenario, we include ten instances (i.e., we solve a total of 9,000 instances). A summary of our experiment design is presented in Table \ref{tab:ExpSetting_insights}. In the following, we explain our parameter choices.

\paragraph{1) Number of drone drops} 
We systematically vary the number of drone drops within $D \in \{1, 2, 4, 6, 10\}$ to capture potential practical limitations of drone payload capacity, whether due to weight or equipment constraints. Note that we also include the case of a single-drop drone to facilitate a direct comparison of our findings with previous studies.

\paragraph{2) Drone speed ratio} 
Starting with a baseline drone speed ratio (i.e., the ratio of the drone speed to the truck speed) of 2, we decrease and increase the ratio to 1 and 3, respectively. These drone speed ratios are used in several prior studies \citep[see, e.g.,][]{Gonzalez-R2020Truck-dronePlanning, Roberti2021ExactDrone, Schermer2019AVariants}. 

\paragraph{3) Drone flight endurance} 

Through preliminary experiment runs, we identified 100 time units as an effectively unlimited drone flight endurance level (when all remaining parameters take their baseline values). 
Starting with 100 time units as the upper limit, we decrease the drone flight endurance in steps of 25 time units, exploring values down to 25 time units. These values are chosen to cover a wide range of restriction levels. A comparison between actual flight duration and maximum flight endurance across all scenarios with uniform customer distribution, as shown in Figure \ref{fig:FigMIUniformMaxFlightDuration}, reveals that our chosen range extends from unrestricted to notably restricted drone flight endurance. 

\paragraph{4) Number and 5) distribution of the customers} 
As provided in the dataset by \citet{Agatz2018OptimizationDrone}, the number of customers within the service area varies between 50 and 250 customers. Further, in addition to the uniform customer distribution, we also consider instances with single-center and double-center customer distributions. Note that instances with uniform customer distributions are limited to a service area of $100 \times 100$ distance square units, resembling highly dense urban settings. In contrast, instances with single-center and double-center customer distributions cover approximately $300 \times 300$ and $500 \times 300$ distance square units, respectively. These configurations may resemble less dense service areas where customers are concentrated around one or two central locations.\\ 

\begin{table}[htbp]
    \centering
    \renewcommand{\arraystretch}{1.1}
    \caption{Summary of parameter choices for numerical experiments on problem instances from  \citet{Agatz2018OptimizationDrone}}
    \scalebox{0.9}{
    \begin{tabular}{lll}
    \toprule
    Type  & Parameter & Value \\
    \midrule
    \multirow{3}[1]{*}{General} 
          & Total number of instances  & 150 \\
          & Customer distribution & Uniform$^*$, single-center, double-center \\
          & Number of customers  &  $\{50,75,100^*,175,250\}$  \\
          
    \midrule
    \multirow{8}[1]{*}{Vehicle} & Drone eligibility  & 100\% \\
          & Truck distance  &  Euclidean [unit] \\
          & Drone distance &  Euclidean [unit] \\
          & Truck speed  & 1 \\
          & Drone speed ratio &  $\{1,2^*,3\}$  \\
          & Flight endurance &   $\{25,50,75,100^*\}$  \\
          & Number of drops  &   $\{1,2^*,4,6,10\}$  \\
          & Launch/retrieval time  &  0/0  \\
    \midrule
    \multirow{2}[1]{*}{CPU} & Run-time limit  &  10 min (for $D \leq 4$), 30 min (for $D \geq 6$) \\
          & Runs  & 5 \\
    \bottomrule
    \multicolumn{3}{l}{\small \textit{Note.} * denotes values used in the baseline scenario.} 
    \end{tabular}%
    }
  \label{tab:ExpSetting_insights}%
\end{table}%

For each scenario, we evaluate the performance of the cooperative truck-and-drone system compared to the truck-only system. By default, we calculate the percentage of completion time savings as 
\begin{equation*}
    \Delta = \frac{z^{\text{TSP}}-z^{\text{MD-SPP-H}}}{z^{\text{TSP}}} \times 100,
\end{equation*}
where $z^{\text{TSP}}$ is the completion time of the truck-only delivery system obtained by the Concorde solver \citep{Applegate2006TheStudy} and $z^\text{MD-SPP-H}$ is the completion time of the truck-and-drone delivery system obtained by \ac{MD-SPP-H}. If not indicated otherwise, we report the average time saving over the ten instances for each scenario. 

\subsection{Direct Effects}
\label{sec:direct}

For our baseline parameter setting (see Section \ref{sec:num_exp}), the truck-and-drone system yields average completion time savings of 41.6\% compared to the truck-only system (see Figure \ref{fig:FigMISummary}(a) for two drops). In the following, we first discuss the direct effects of the three drone operational parameters on the delivery system's performance. Second, we investigate how each of the service area's characteristics, i.e., the number and distribution of customers, affect the time savings.

\begin{figure}[ht]
    \centering
    \includegraphics{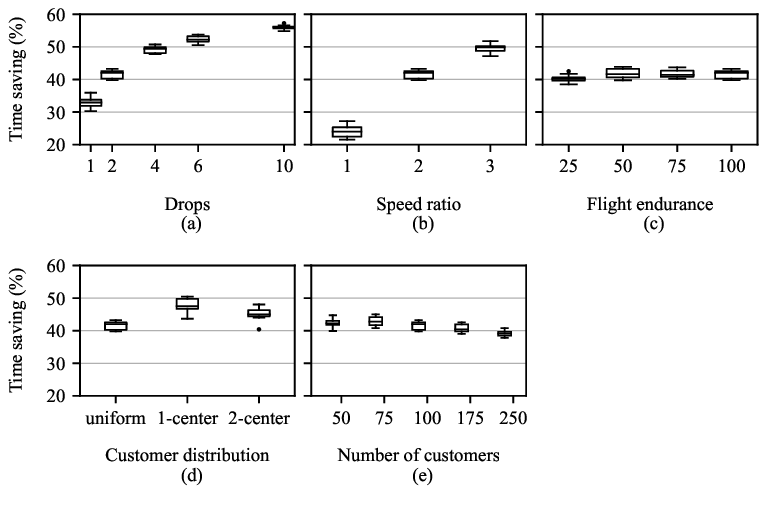}
    \caption{Percentage of time savings for ten problem instances, obtained by varying a single parameter from our baseline scenario.}  
    \label{fig:FigMISummary}
\end{figure}

\paragraph{Altering the number of drone drops}

Figure \ref{fig:FigMISummary}(a) shows that the truck-and-drone delivery system significantly reduces delivery duration compared to the truck-only system, with time savings increasing from 33\% for a single-drop drone to 56\% for a drone capable of making up to ten drops. However, our results show that this relationship is not linear, but is characterized by rapidly diminishing marginal returns as the number of drops increases.

\paragraph{Altering the drone speed ratio}

The relative speed of the drone compared to the truck emerges as a critical factor influencing the achievable time savings. For the baseline drone speed ratio of 2, Figure \ref{fig:FigMISummary}(b) shows completion time savings of 41.6\%, which decrease to 24.1\% for a speed ratio of 1 and increase to 49.6\% for a speed ratio of 3. Again, we observe diminishing marginal returns as the speed ratio increases. This is likely due to the need for coordination between truck and drone movements. In other words, if we only increase the drone's speed while keeping the other system parameters fixed, the drone will have to wait for the truck more frequently and for longer periods. 

\paragraph{Altering the drone flight endurance}

Intuitively, 
the potential time savings are diminished if the drone's ability to move independently is limited due to limited flight endurance. For instance, Figure \ref{fig:FigMISummary}(c) shows a reduction in time savings by 1.5\% as the flight endurance level drops to 25 time units. For endurance levels of 50 and higher, this parameter does not impact the time savings, i.e., the savings plateau. 
While the impact of reducing the drone flight endurance is almost negligible in our baseline scenario, it is more pronounced for other parameter combinations. We refer to our discussion in Section \ref{sec:interation}.

\paragraph{Non-uniform customer distributions}
Figure \ref{fig:FigMISummary}(d) shows that the attainable time savings depend on the customer distributions, with average time savings increasing from 41.6\% for uniform instances to 47.7\% and 45.1\% for single- and double-center instances, respectively. This implies that drones are more advantageous when serving distribution areas where customers are concentrated around a few central locations, with only a small number of customers located in outlying regions. As \cite{Agatz2018OptimizationDrone} highlights for single-drop drones, a plausible explanation is that the drone supports the truck by serving customers furthest from the center of the cluster(s) efficiently.

\paragraph{Altering the number of customers}

As we vary the number of customers between 50 and 250, the relative time savings vary between 39.2\% and 42.8\%  (see Figure \ref{fig:FigMISummary}(e)).
However, our results do not reveal a clear and unambiguous relationship between the number of customers and the obtainable time savings (similar results were obtained for the single- and double-center customer distributions). 
For our baseline scenario, relative time savings remain relatively stable despite significant changes in the number of customers. 
We conduct an additional investigation in Section \ref{sec:interation} to examine this effect in more detail.

\subsection{Parameter Interactions}
\label{sec:interation}

After examining how individual parameter impacts the dynamics of the delivery system, we further investigate how these effects interact by conducting a full factorial analysis. We split this section into two parts. 
In Section \ref{system-inherent-dynamics}, we first discuss the delivery system's inherent dynamics, i.e., how the three operational drone parameters interact to impact the relative time savings. 
Section \ref{operational-environment} then discusses how the characteristic of the service area impacts its inherent dynamics (and, consequentially, the time savings). 

\begin{figure}[ht]
    \RawFloats
    \centering
    \includegraphics{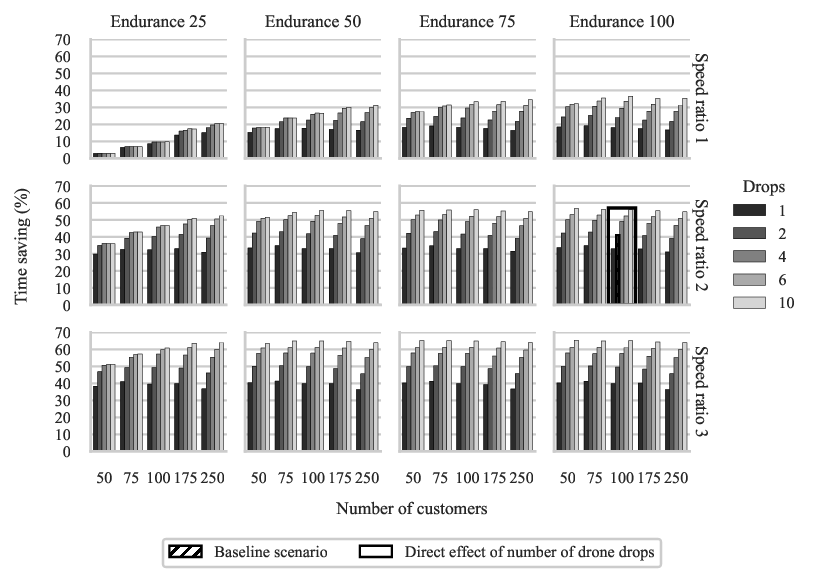}
    \caption{Average percentage time savings over ten problem instances from a truck-and-drone system compared to the truck-only alternative for varying parameter settings and uniform customer distribution.}
    \label{fig:FigMIUniformTimeRatio}
\end{figure}

\subsubsection{System-Inherent Dynamics} \label{system-inherent-dynamics}


\paragraph{Interaction of key drivers of time savings: drone drops and speed ratio}

Figure \ref{fig:FigMIUniformTimeRatio} shows that the drone speed ratio moderates the benefits of increasing the number of possible drone drops.
With the baseline endurance of 100 time units and a drone speed ratio of 1 (2, 3), time savings increase from around 18.1\% (33.0\%, 39.8\%) for a single-drop drone to around 36.6\% (56.0\%, 65.0\%) for a ten-drop drone. This represents an absolute increase of 18.5 (23.0, 25.2) percentage points, which indicates that the faster the drone, the larger the absolute gain from increasing the number of drops. However, the relative percentage increase is about 102.2\% (69.7\%, 63.3\%). Therefore, the faster the drone, the lower the relative gain from more drops. 
This is an intuitive result since a faster drone has a larger time-saving potential to begin with, even in the single-drop case, than a drone that travels at the same speed as the truck.

\paragraph{Flight endurance as a prerequisite for multiple drops}

A sufficient flight endurance limit is crucial for achieving the full benefits of using a multi-drop drone. We observe a truncation effect for low flight endurance levels: while we still see measurable benefits from moving from a low number of drops per drone sortie to a slightly higher number, the savings plateau, and no further improvements are obtained beyond a certain number of drops per sortie (see Figure \ref{fig:FigMIUniformTimeRatio} for an endurance level of 25 time units and speed ratio of 2).
This effect is more pronounced and sets in at higher levels of flight endurance as the drone speed ratio decreases (see Figure \ref{fig:FigMIUniformTimeRatio} for an endurance level of 50 comparing speed ratios 1 and 2). This is expected because a very restrictive flight endurance keeps the drone from exploiting the ability to make many drops per sortie unless these drops are closely co-located.
Further, our analysis reveals that the sum of all drone flying times is inversely related to the number of drops that a drone can make per flight (cf., Figure \ref{fig:FigMIUniformTotalFlightDuration} in \ref{app:MI}). Although the maximum duration of a single drone flight increases with a higher number of drops, we note that the aggregate duration of all drone flights combined decreases as the maximum number of possible drops increases. 

\begin{figure}[ht]
     \RawFloats
     \centering   
     \includegraphics{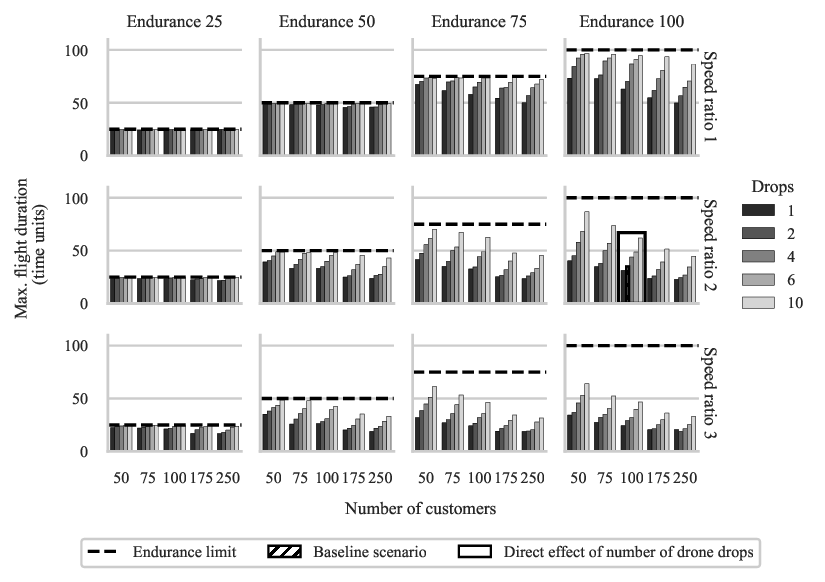}
     \caption{Comparison of average observed maximum drone flight duration over ten instances and imposed drone flight endurance limit for instances with uniform customer distribution.}
     \label{fig:FigMIUniformMaxFlightDuration}
\end{figure}

\subsubsection{Impact of the Operational Environment} \label{operational-environment}


\paragraph{Serving areas with low customer density} 

For most parameter combinations depicted in Figure \ref{fig:FigMIUniformTimeRatio}, the time savings from the truck-and-drone system are independent of the number of customers and, therefore, the customer density. Only for low endurance levels and speed ratios, the achievable time savings quickly diminish as the number of customers decreases, to the point where virtually no time savings can be obtained (see the top left panel of Figure \ref{fig:FigMIUniformTimeRatio}). This suggests that the benefits of adding a multi-drop drone are largely independent of the average distance between customers in the service area as long as the drone's flight endurance is sufficient and the drone is comparably fast. 

\paragraph{Serving areas with non-uniform customer distribution}

We first consider scenarios in which the drone is effectively not restricted in flight endurance. In these cases, our results reveal that the impact of increasing the number of drone drops on time savings is consistent across varying customer distributions, that is, we identify the same general pattern of marginal diminishing returns (i.e., similar incremental time savings as the number of drone drops increases). However, the absolute level of time savings is higher in non-uniform compared to uniform customer distribution (cf., Figure \ref{fig:FigMIUniformTimeRatio} and Figures \ref{fig:FigMISingleTimeRatio}, and \ref{fig:FigMIDoubleTimeRatio} in \ref{app:MI} for a flight endurance level of 100 time units and a speed ratio of 3).
Interestingly, moving from unrestricted drone movements to more restrictive levels of flight endurance and lower speed ratios, we find that a non-uniform customer distribution might act as a catalyst for the effects of endurance and speed. As non-uniform distributions demand longer distances per drone flight, the same level of drone flight endurance and speed have a stronger limiting effect in non-uniform than in uniform scenarios. 

\section{Implications for Practice}
\label{sec:practice}

The findings from our study of the impact of different parameters on time savings lead to several implications and recommendations for practitioners and policymakers. 
First, our work shows that the application of collaborative truck-and-drone delivery systems can significantly reduce total delivery times. 
Second, our experiment data show that the achievable relative truck distance savings closely mirror the relative time savings, i.e., both metrics take on similar values. As a result, trucks would travel fewer miles and spend less time on the road, reducing emissions and congestion. 
While the potential savings are considerable, their extent is primarily determined by the characteristics of the service area and available drone technology, underscoring the need for case-dependent evaluations.

Specifically, our analysis indicates that the attainable time savings strongly depend on the speed ratio between the truck and the drone and the maximum number of drops. For both parameters, we observe diminishing marginal returns. We find that single-drop drones or drones capable of making a few drops -- which are already available on the market \citep{Sakharkar2021A2ZDrone, Wingcopter2023Wingcopter198} -- that travel at a moderate speed (e.g., twice the speed of a traditional delivery truck) can lead to significant time savings. For a uniform customer distribution and sufficient flight endurance levels, the time savings from adding a drone that travels at twice the speed of the truck range between 33.1\% for a single-drop drone and 48.7\% for a drone capable of up to four drops. Additionally, allowing the drone to serve multiple customers in one sortie reduces the number of back-and-forth trips performed by the drone between the truck and the customers it serves. As a result, the collaborative system completes its deliveries faster, and the drone spends less time in flight mode throughout the entire delivery process. However, considering the increased technological and operational complexity and costs associated with systems designed for numerous drops (i.e., five or more), a thorough economic assessment based on the specific use case is warranted before adopting and deploying such systems. 

Considering the trade-off between investing in increasing either drone speed or the number of possible drops, we find that both input parameters alter the dynamics of the truck-and-drone delivery system in distinct ways, and the incremental improvements will depend on the remaining operational parameters of the drone and the characteristics of the service area. For instance, we find for our baseline scenario that increasing the number of drone drops from two to four yields the same efficiency gains as increasing the drone-to-truck speed ratio from 2:1 to 3:1 (cf. Figure \ref{fig:FigMISummary}(a) and (b)). Therefore, when adopting drone delivery operations, balancing the investment in drone speed against the practical benefits of multiple drops is crucial, focusing on the most cost-effective combination for the specific use case.

Finally, our study can help last-mile delivery companies decide for which operational environments (characterized by the density and spatial distribution of customers) they should consider investing in a truck-and-drone delivery system, and policymakers decide for which environments they should provide the policy framework. We find that the operational environment can act as a stress test for the collaborative system, highlighting the importance of carefully assessing the suitability of the drone's flight endurance and speed to reach optimal performance for different use cases. For example, we show that potential time savings can be particularly high when serving non-uniformly distributed customers or service areas with low demand density, but enhanced drone capabilities (speed and endurance) are needed to reap the full benefits of the collaborative system.

\section{Conclusion} \label{sec:conclusion}

In this paper, we investigate the benefits of combining conventional ground delivery vehicles and aerial cargo drones for last-mile logistics. This approach leverages the numerous advantages of using drones, including their low per-vehicle capital expenditure costs, reduced carbon footprint, and travel directness and speed. Specifically, we explore the value of supporting ground delivery vehicles with drones capable of making multiple package deliveries per sortie.

We study the \acf{FSTSP-MD}, a multi-modal last-mile delivery model where a single truck is supported by a single multi-drop drone during the delivery process. In this model, the drone can be launched from the truck en route to make autonomous package deliveries to multiple customers before returning to the truck. The \ac{FSTSP-MD} aims to determine the synchronized truck and drone delivery routes that minimize the completion time of the delivery process.

We propose the \acf{MD-SPP-H} to solve realistically-sized \ac{FSTSP-MD} instances. \ac{MD-SPP-H} is a simple and highly effective order-first, split-second heuristic that combines standard local search and diversification techniques with a novel shortest-path problem that finds \ac{FSTSP-MD} solutions (for a given sequence of customers) in polynomial time. 

Using well-established instances with between 50 and 250 customers, we show that \ac{MD-SPP-H} outperforms state-of-the-art heuristics developed for the \ac{FSTSP-MD} and the \ac{FSTSP}. Further, compared to the exact solution approach developed by \cite{Vasquez2021AnDecomposition} for the \ac{FSTSP}, we show that \ac{MD-SPP-H} solves most instances (with up to 20 customers) to optimality in less than half a second, with only a minor average optimality gap of 0.15\%. Several managerial insights and policy implications are also presented regarding using drones to boost the efficiency of traditional, ground-based delivery vehicles.


There are several potential avenues for future research. For instance, \ac{MD-SPP-H} could be extended to allow the truck to launch and recover the drone at locations other than customer nodes (see, e.g., \cite{Schermer2019AOperations}). Further, the \ac{FSTSP-MD} assumes a constant drone flight speed and a flight endurance independent of carrying weight. Thus, future research could enrich our \ac{MD-SPP} formulation to account for more realistic drone parameters (see, e.g., \cite{Jeong2019Truck-droneZones} and \cite{Raj2020TheSpeeds}). Finally, a natural extension is to consider multiple trucks equipped with multiple drones. For instance, future research could propose another split algorithm to accommodate multiple drones and use our \ac{MD-SPP-H} framework to efficiently explore the TSP solution space. Since simultaneously launching and retrieving multiple drones entails significant challenges, particularly with respect to air traffic management (see, e.g., \cite{Murray2020TheDrones}), this extension could be interesting when considering pairing the delivery truck with other more conventional vehicles, such as cargo bikes.

\begin{acronym}
\acro{IGH}{Iterated Greedy Heuristic}
\acro{MD-SPP}{\textit{Multi-Drop Shortest Path Problem}}
\acro{MD-SPP-H}{\textit{Multi-Drop Shortest Path Problem--Based Heuristic}}
\acro{FSTSP}{Flying Sidekick Traveling Salesman Problem}
\acro{FSTSP-MD}{\textit{Flying Sidekick Traveling Salesman Problem with Multiple Drops}}
\acro{SPP}{Shortest Path Problem}
\acro{TSP}{Traveling Salesman Problem}
\acro{MILP}{Mixed-Integer Linear Programming}
\acro{ALNS}{Adaptive Large Neighborhood Search}
\acro{EP-All}{Exact Partitioning -- All (three local searches)}
\acro{SPP-All}{Shortest Path Problem -- All (three local searches)}
\end{acronym}

\bibliography{./Literature/references.bib}

\begin{thebibliography}{63}
\expandafter\ifx\csname natexlab\endcsname\relax\def\natexlab#1{#1}\fi
\providecommand{\url}[1]{\texttt{#1}}
\providecommand{\href}[2]{#2}
\providecommand{\path}[1]{#1}
\providecommand{\DOIprefix}{doi:}
\providecommand{\ArXivprefix}{arXiv:}
\providecommand{\URLprefix}{URL: }
\providecommand{\Pubmedprefix}{pmid:}
\providecommand{\doi}[1]{\href{http://dx.doi.org/#1}{\path{#1}}}
\providecommand{\Pubmed}[1]{\href{pmid:#1}{\path{#1}}}
\providecommand{\bibinfo}[2]{#2}
\ifx\xfnm\relax \def\xfnm[#1]{\unskip,\space#1}\fi
\bibitem[{Agatz et~al.(2018)Agatz, Bouman \& Schmidt}]{Agatz2018OptimizationDrone}
\bibinfo{author}{Agatz, N.}, \bibinfo{author}{Bouman, P.}, \& \bibinfo{author}{Schmidt, M.} (\bibinfo{year}{2018}).
\newblock \bibinfo{title}{{Optimization approaches for the traveling salesman problem with drone}}.
\newblock {\it \bibinfo{journal}{Transportation Science}\/},  {\it \bibinfo{volume}{52}\/}, \bibinfo{pages}{739--1034}. \DOIprefix\doi{10.1287/trsc.2017.0791}.
\bibitem[{Alamalhodaei(2021)}]{Alamalhodaei2021WingcopterSky}
\bibinfo{author}{Alamalhodaei, A.} (\bibinfo{year}{2021}).
\newblock \bibinfo{title}{{Wingcopter debuts a triple-drop drone to create ‘logistical highways in the sky’}}.
\newblock \URLprefix \url{https://techcrunch.com/2021/04/27/wingcopter-debuts-a-triple-drop-drone-to-create-logistical-highways-in-the-sky/}.
\bibitem[{Applegate et~al.(2006)Applegate, Bixby, Chv{\'{a}}tal \& Cook}]{Applegate2006TheStudy}
\bibinfo{author}{Applegate, D.~L.}, \bibinfo{author}{Bixby, R.~E.}, \bibinfo{author}{Chv{\'{a}}tal, V.}, \& \bibinfo{author}{Cook, W.~J.} (\bibinfo{year}{2006}).
\newblock {\it \bibinfo{title}{{The traveling salesman problem: A computational study}}\/}.
\newblock (\bibinfo{edition}{2nd} ed.).
\newblock \bibinfo{publisher}{Princeton University Press}.
\bibitem[{Cavani et~al.(2021)Cavani, Iori \& Roberti}]{Cavani2021ExactDrones}
\bibinfo{author}{Cavani, S.}, \bibinfo{author}{Iori, M.}, \& \bibinfo{author}{Roberti, R.} (\bibinfo{year}{2021}).
\newblock \bibinfo{title}{{Exact methods for the traveling salesman problem with multiple drones}}.
\newblock {\it \bibinfo{journal}{Transportation Research Part C: Emerging Technologies}\/},  {\it \bibinfo{volume}{130}\/}. \DOIprefix\doi{10.1016/j.trc.2021.103280}.
\bibitem[{Chauhan et~al.(2019)Chauhan, Unnikrishnan \& Figliozzi}]{Chauhan2019MaximumDrones}
\bibinfo{author}{Chauhan, D.}, \bibinfo{author}{Unnikrishnan, A.}, \& \bibinfo{author}{Figliozzi, M.} (\bibinfo{year}{2019}).
\newblock \bibinfo{title}{{Maximum coverage capacitated facility location problem with range constrained drones}}.
\newblock {\it \bibinfo{journal}{Transportation Research Part C: Emerging Technologies}\/},  {\it \bibinfo{volume}{99}\/}, \bibinfo{pages}{1--18}. \DOIprefix\doi{10.1016/j.trc.2018.12.001}.
\bibitem[{Chen(2023)}]{Chen2023AmazonDrone}
\bibinfo{author}{Chen, C.} (\bibinfo{year}{2023}).
\newblock \bibinfo{title}{{Amazon reveals first photos of the new Prime Air delivery drone}}.
\newblock \URLprefix \url{https://www.aboutamazon.com/news/transportation/amazon-prime-air-drone-delivery-mk30-photos}.
\bibitem[{Chung et~al.(2020)Chung, Sah \& Lee}]{Chung2020OptimizationDirections}
\bibinfo{author}{Chung, S.~H.}, \bibinfo{author}{Sah, B.}, \& \bibinfo{author}{Lee, J.} (\bibinfo{year}{2020}).
\newblock \bibinfo{title}{{Optimization for drone and drone-truck combined operations: A review of the state of the art and future directions}}.
\newblock {\it \bibinfo{journal}{Computers and Operations Research}\/},  {\it \bibinfo{volume}{123}\/}, \bibinfo{pages}{105004}. \DOIprefix\doi{10.1016/j.cor.2020.105004}.
\bibitem[{Cornell et~al.(2023)Cornell, Mahan \& Riedel}]{Cornell2023Commercial2023}
\bibinfo{author}{Cornell, A.}, \bibinfo{author}{Mahan, S.}, \& \bibinfo{author}{Riedel, R.} (\bibinfo{year}{2023}).
\newblock \bibinfo{title}{{Commercial drone deliveries are demonstrating continued momentum in 2023}}.
\newblock \URLprefix \url{https://www.mckinsey.com/industries/aerospace-and-defense/our-insights/future-air-mobility-blog/commercial-drone-deliveries-are-demonstrating-continued-momentum-in-2023}.
\bibitem[{Dayarian et~al.(2020)Dayarian, Savelsbergh \& Clarke}]{Dayarian2020Same-dayResupply}
\bibinfo{author}{Dayarian, I.}, \bibinfo{author}{Savelsbergh, M.}, \& \bibinfo{author}{Clarke, J.-P.} (\bibinfo{year}{2020}).
\newblock \bibinfo{title}{{Same-day delivery with drone resupply}}.
\newblock {\it \bibinfo{journal}{Transportation Science}\/},  {\it \bibinfo{volume}{54}\/}, \bibinfo{pages}{229--249}. \DOIprefix\doi{10.1287/trsc.2019.0944}.
\bibitem[{Dukkanci et~al.(2024{\natexlab{a}})Dukkanci, Campbell \& Kara}]{Dukkanci2024FacilityReviewb}
\bibinfo{author}{Dukkanci, O.}, \bibinfo{author}{Campbell, J.~F.}, \& \bibinfo{author}{Kara, B.~Y.} (\bibinfo{year}{2024}{\natexlab{a}}).
\newblock \bibinfo{title}{{Facility location decisions for drone delivery: A literature review}}.
\newblock {\it \bibinfo{journal}{European Journal of Operational Research}\/},  {\it \bibinfo{volume}{316}\/}, \bibinfo{pages}{397--418}. \DOIprefix\doi{10.1016/j.ejor.2023.10.036}.
\bibitem[{Dukkanci et~al.(2024{\natexlab{b}})Dukkanci, Campbell \& Kara}]{Dukkanci2024FacilityReview}
\bibinfo{author}{Dukkanci, O.}, \bibinfo{author}{Campbell, J.~F.}, \& \bibinfo{author}{Kara, B.~Y.} (\bibinfo{year}{2024}{\natexlab{b}}).
\newblock \bibinfo{title}{{Facility location decisions for drone delivery with riding: A literature review}}.
\newblock {\it \bibinfo{journal}{Computers and Operations Research}\/},  {\it \bibinfo{volume}{167}\/}, \bibinfo{pages}{106672}. \DOIprefix\doi{10.1016/j.cor.2024.106672}.
\bibitem[{Dukkanci et~al.(2021)Dukkanci, Kara \& Bekta{\c{s}}}]{Dukkanci2021MinimizingOptimization}
\bibinfo{author}{Dukkanci, O.}, \bibinfo{author}{Kara, B.~Y.}, \& \bibinfo{author}{Bekta{\c{s}}, T.} (\bibinfo{year}{2021}).
\newblock \bibinfo{title}{{Minimizing energy and cost in range-limited drone deliveries with speed optimization}}.
\newblock {\it \bibinfo{journal}{Transportation Research Part C: Emerging Technologies}\/},  {\it \bibinfo{volume}{125}\/}. \DOIprefix\doi{10.1016/j.trc.2021.102985}.
\bibitem[{Etherington(2017)}]{Etherington2017Mercedes-BenzZurich}
\bibinfo{author}{Etherington, D.} (\bibinfo{year}{2017}).
\newblock \bibinfo{title}{{Mercedes-Benz kicks off drone delivery pilot in Zurich}}.
\newblock \URLprefix \url{https://techcrunch.com/2017/09/28/mercedes-benz-kicks-off-drone-delivery-pilot-in-zurich/}.
\bibitem[{Gonzalez-R et~al.(2020)Gonzalez-R, Canca, Andrade-Pineda, Calle \& Leon-Blanco}]{Gonzalez-R2020Truck-dronePlanning}
\bibinfo{author}{Gonzalez-R, P.~L.}, \bibinfo{author}{Canca, D.}, \bibinfo{author}{Andrade-Pineda, J.~L.}, \bibinfo{author}{Calle, M.}, \& \bibinfo{author}{Leon-Blanco, J.~M.} (\bibinfo{year}{2020}).
\newblock \bibinfo{title}{{Truck-drone team logistics: A heuristic approach to multi-drop route planning}}.
\newblock {\it \bibinfo{journal}{Transportation Research Part C: Emerging Technologies}\/},  {\it \bibinfo{volume}{114}\/}, \bibinfo{pages}{657--680}. \DOIprefix\doi{10.1016/j.trc.2020.02.030}.
\bibitem[{Gonzalez-R et~al.(2024)Gonzalez-R, Sanchez-Wells \& Andrade-Pineda}]{Gonzalez-R2024AProblem}
\bibinfo{author}{Gonzalez-R, P.~L.}, \bibinfo{author}{Sanchez-Wells, D.}, \& \bibinfo{author}{Andrade-Pineda, J.~L.} (\bibinfo{year}{2024}).
\newblock \bibinfo{title}{{A bi-criteria approach to the truck-multidrone routing problem}}.
\newblock {\it \bibinfo{journal}{Expert Systems with Applications}\/},  {\it \bibinfo{volume}{243}\/}, \bibinfo{pages}{122809}. \DOIprefix\doi{10.1016/j.eswa.2023.122809}.
\bibitem[{Gu et~al.(2022)Gu, Poon, Luo, Liu \& Liu}]{Gu2022AVisits}
\bibinfo{author}{Gu, R.}, \bibinfo{author}{Poon, M.}, \bibinfo{author}{Luo, Z.}, \bibinfo{author}{Liu, Y.}, \& \bibinfo{author}{Liu, Z.} (\bibinfo{year}{2022}).
\newblock \bibinfo{title}{{A hierarchical solution evaluation method and a hybrid algorithm for the vehicle routing problem with drones and multiple visits}}.
\newblock {\it \bibinfo{journal}{Transportation Research Part C: Emerging Technologies}\/},  {\it \bibinfo{volume}{141}\/}, \bibinfo{pages}{103733}. \DOIprefix\doi{10.1016/j.trc.2022.103733}.
\bibitem[{Ha et~al.(2018)Ha, Deville, Pham \& H{\`{a}}}]{Ha2018OnDrone}
\bibinfo{author}{Ha, Q.~M.}, \bibinfo{author}{Deville, Y.}, \bibinfo{author}{Pham, Q.~D.}, \& \bibinfo{author}{H{\`{a}}, M.~H.} (\bibinfo{year}{2018}).
\newblock \bibinfo{title}{{On the min-cost traveling salesman problem with drone}}.
\newblock {\it \bibinfo{journal}{Transportation Research Part C: Emerging Technologies}\/},  {\it \bibinfo{volume}{86}\/}, \bibinfo{pages}{597--621}. \DOIprefix\doi{10.1016/j.trc.2017.11.015}.
\bibitem[{Holland et~al.(2017)Holland, Levis, Nuggehalli, Santilli \& Winters}]{Holland2017UPSRoutes}
\bibinfo{author}{Holland, C.}, \bibinfo{author}{Levis, J.}, \bibinfo{author}{Nuggehalli, R.}, \bibinfo{author}{Santilli, B.}, \& \bibinfo{author}{Winters, J.} (\bibinfo{year}{2017}).
\newblock \bibinfo{title}{{UPS optimizes delivery routes}}.
\newblock {\it \bibinfo{journal}{Interfaces}\/},  {\it \bibinfo{volume}{47}\/}, \bibinfo{pages}{8--23}. \DOIprefix\doi{10.1287/INTE.2016.0875}.
\bibitem[{Hong et~al.(2018)Hong, Kuby \& Murray}]{Hong2018APlanning}
\bibinfo{author}{Hong, I.}, \bibinfo{author}{Kuby, M.}, \& \bibinfo{author}{Murray, A.~T.} (\bibinfo{year}{2018}).
\newblock \bibinfo{title}{{A range-restricted recharging station coverage model for drone delivery service planning}}.
\newblock {\it \bibinfo{journal}{Transportation Research Part C: Emerging Technologies}\/},  {\it \bibinfo{volume}{90}\/}, \bibinfo{pages}{198--212}. \DOIprefix\doi{10.1016/j.trc.2018.02.017}.
\bibitem[{Jeong et~al.(2019)Jeong, Song \& Lee}]{Jeong2019Truck-droneZones}
\bibinfo{author}{Jeong, H.~Y.}, \bibinfo{author}{Song, B.~D.}, \& \bibinfo{author}{Lee, S.} (\bibinfo{year}{2019}).
\newblock \bibinfo{title}{{Truck-drone hybrid delivery routing: Payload-energy dependency and No-Fly zones}}.
\newblock {\it \bibinfo{journal}{International Journal of Production Economics}\/},  {\it \bibinfo{volume}{214}\/}, \bibinfo{pages}{220--233}. \DOIprefix\doi{10.1016/j.ijpe.2019.01.010}.
\bibitem[{Jiang et~al.(2024)Jiang, Dai, Yang \& Ma}]{Jiang2024AServices}
\bibinfo{author}{Jiang, J.}, \bibinfo{author}{Dai, Y.}, \bibinfo{author}{Yang, F.}, \& \bibinfo{author}{Ma, Z.} (\bibinfo{year}{2024}).
\newblock \bibinfo{title}{{A multi-visit flexible-docking vehicle routing problem with drones for simultaneous pickup and delivery services}}.
\newblock {\it \bibinfo{journal}{European Journal of Operational Research}\/},  {\it \bibinfo{volume}{312}\/}, \bibinfo{pages}{125--137}. \DOIprefix\doi{10.1016/j.ejor.2023.06.021}.
\bibitem[{Kang \& Lee(2021)}]{Kang2021AnProblem}
\bibinfo{author}{Kang, M.}, \& \bibinfo{author}{Lee, C.} (\bibinfo{year}{2021}).
\newblock \bibinfo{title}{{An exact algorithm for heterogeneous drone-truck routing problem}}.
\newblock {\it \bibinfo{journal}{Transportation Science}\/},  {\it \bibinfo{volume}{55}\/}, \bibinfo{pages}{1088--1112}. \DOIprefix\doi{10.1287/trsc.2021.1055}.
\bibitem[{Kitjacharoenchai et~al.(2020)Kitjacharoenchai, Min \& Lee}]{Kitjacharoenchai2020TwoDelivery}
\bibinfo{author}{Kitjacharoenchai, P.}, \bibinfo{author}{Min, B.~C.}, \& \bibinfo{author}{Lee, S.} (\bibinfo{year}{2020}).
\newblock \bibinfo{title}{{Two echelon vehicle routing problem with drones in last mile delivery}}.
\newblock {\it \bibinfo{journal}{International Journal of Production Economics}\/},  {\it \bibinfo{volume}{225}\/}. \DOIprefix\doi{10.1016/j.ijpe.2019.107598}.
\bibitem[{Kitjacharoenchai et~al.(2019)Kitjacharoenchai, Ventresca, Moshref-Javadi, Lee, Tanchoco \& Brunese}]{Kitjacharoenchai2019MultipleApproach}
\bibinfo{author}{Kitjacharoenchai, P.}, \bibinfo{author}{Ventresca, M.}, \bibinfo{author}{Moshref-Javadi, M.}, \bibinfo{author}{Lee, S.}, \bibinfo{author}{Tanchoco, J.~M.}, \& \bibinfo{author}{Brunese, P.~A.} (\bibinfo{year}{2019}).
\newblock \bibinfo{title}{{Multiple traveling salesman problem with drones: Mathematical model and heuristic approach}}.
\newblock {\it \bibinfo{journal}{Computers and Industrial Engineering}\/},  {\it \bibinfo{volume}{129}\/}, \bibinfo{pages}{14--30}. \DOIprefix\doi{10.1016/j.cie.2019.01.020}.
\bibitem[{Kundu et~al.(2021)Kundu, Escobar \& Matis}]{Kundu2021AnProblem}
\bibinfo{author}{Kundu, A.}, \bibinfo{author}{Escobar, R.~G.}, \& \bibinfo{author}{Matis, T.~I.} (\bibinfo{year}{2021}).
\newblock \bibinfo{title}{{An efficient routing heuristic for a drone-assisted delivery problem}}.
\newblock {\it \bibinfo{journal}{IMA Journal of Management Mathematics}\/},  {\it \bibinfo{volume}{33}\/}, \bibinfo{pages}{583--601}. \DOIprefix\doi{10.1093/imaman/dpab039}.
\bibitem[{Kundu et~al.(2020)Kundu, Gatica \& Matis}]{Kundu2020ResultsHeuristic}
\bibinfo{author}{Kundu, A.}, \bibinfo{author}{Gatica, R.}, \& \bibinfo{author}{Matis, T.~I.} (\bibinfo{year}{2020}).
\newblock \bibinfo{title}{{Results for the Implementation of EP-All and SPP-All heuristic}}.
\newblock \DOIprefix\doi{10.5281/zenodo.4007382}.
\bibitem[{Leon-Blanco et~al.(2022)Leon-Blanco, Gonzalez-R, Andrade-Pineda, Canca \& Calle}]{Leon-Blanco2022AProblem}
\bibinfo{author}{Leon-Blanco, J.~M.}, \bibinfo{author}{Gonzalez-R, P.~L.}, \bibinfo{author}{Andrade-Pineda, J.~L.}, \bibinfo{author}{Canca, D.}, \& \bibinfo{author}{Calle, M.} (\bibinfo{year}{2022}).
\newblock \bibinfo{title}{{A multi-agent approach to the truck multi-drone routing problem}}.
\newblock {\it \bibinfo{journal}{Expert Systems with Applications}\/},  {\it \bibinfo{volume}{195}\/}, \bibinfo{pages}{116604}. \DOIprefix\doi{10.1016/j.eswa.2022.116604}.
\bibitem[{Liu et~al.(2021)Liu, Liu, Shi, Wu \& Pedrycz}]{Liu2021Two-EchelonDrone}
\bibinfo{author}{Liu, Y.}, \bibinfo{author}{Liu, Z.}, \bibinfo{author}{Shi, J.}, \bibinfo{author}{Wu, G.}, \& \bibinfo{author}{Pedrycz, W.} (\bibinfo{year}{2021}).
\newblock \bibinfo{title}{{Two-echelon routing problem for parcel delivery by cooperated truck and drone}}.
\newblock {\it \bibinfo{journal}{IEEE Transactions on Systems, Man, and Cybernetics: Systems}\/},  {\it \bibinfo{volume}{51}\/}, \bibinfo{pages}{7450--7465}. \DOIprefix\doi{10.1109/TSMC.2020.2968839}.
\bibitem[{Luo et~al.(2022)Luo, Gu, Poon, Liu \& Lim}]{Luo2022ATrips}
\bibinfo{author}{Luo, Z.}, \bibinfo{author}{Gu, R.}, \bibinfo{author}{Poon, M.}, \bibinfo{author}{Liu, Z.}, \& \bibinfo{author}{Lim, A.} (\bibinfo{year}{2022}).
\newblock \bibinfo{title}{{A last-mile drone-assisted one-to-one pickup and delivery problem with multi-visit drone trips}}.
\newblock {\it \bibinfo{journal}{Computers and Operations Research}\/},  {\it \bibinfo{volume}{148}\/}, \bibinfo{pages}{106015}. \DOIprefix\doi{10.1016/j.cor.2022.106015}.
\bibitem[{Luo et~al.(2021)Luo, Poon, Zhang, Liu \& Lim}]{Luo2021TheMulti-Drones}
\bibinfo{author}{Luo, Z.}, \bibinfo{author}{Poon, M.}, \bibinfo{author}{Zhang, Z.}, \bibinfo{author}{Liu, Z.}, \& \bibinfo{author}{Lim, A.} (\bibinfo{year}{2021}).
\newblock \bibinfo{title}{{The multi-visit traveling salesman problem with multi-drones}}.
\newblock {\it \bibinfo{journal}{Transportation Research Part C: Emerging Technologies}\/},  {\it \bibinfo{volume}{128}\/}, \bibinfo{pages}{103172}. \DOIprefix\doi{10.1016/j.trc.2021.103172}.
\bibitem[{Macrina et~al.(2020)Macrina, Pugliese, Guerriero \& Laporte}]{Macrina2020Drone-aidedReview}
\bibinfo{author}{Macrina, G.}, \bibinfo{author}{Pugliese, L. D.~P.}, \bibinfo{author}{Guerriero, F.}, \& \bibinfo{author}{Laporte, G.} (\bibinfo{year}{2020}).
\newblock \bibinfo{title}{{Drone-aided routing: A literature review}}.
\newblock {\it \bibinfo{journal}{Transportation Research Part C: Emerging Technologies}\/},  {\it \bibinfo{volume}{120}\/}, \bibinfo{pages}{102762}. \DOIprefix\doi{10.1016/j.trc.2020.102762}.
\bibitem[{Meng et~al.(2024)Meng, Chen \& Li}]{Meng2024TheWindows}
\bibinfo{author}{Meng, S.}, \bibinfo{author}{Chen, Y.}, \& \bibinfo{author}{Li, D.} (\bibinfo{year}{2024}).
\newblock \bibinfo{title}{{The multi-visit drone-assisted pickup and delivery problem with time windows}}.
\newblock {\it \bibinfo{journal}{European Journal of Operational Research}\/},  {\it \bibinfo{volume}{314}\/}, \bibinfo{pages}{685--702}. \DOIprefix\doi{10.1016/j.ejor.2023.10.021}.
\bibitem[{Merch{\'{a}}n et~al.(2024)Merch{\'{a}}n, Arora, Pachon, Konduri, Winkenbach, Parks \& Noszek}]{Merchan20242021Set}
\bibinfo{author}{Merch{\'{a}}n, D.}, \bibinfo{author}{Arora, J.}, \bibinfo{author}{Pachon, J.}, \bibinfo{author}{Konduri, K.}, \bibinfo{author}{Winkenbach, M.}, \bibinfo{author}{Parks, S.}, \& \bibinfo{author}{Noszek, J.} (\bibinfo{year}{2024}).
\newblock \bibinfo{title}{{2021 Amazon Last Mile Routing Research Challenge: Data set}}.
\newblock {\it \bibinfo{journal}{Transportation Science}\/},  {\it \bibinfo{volume}{58}\/}, \bibinfo{pages}{8--11}. \DOIprefix\doi{10.1287/trsc.2022.1173}.
\bibitem[{Moshref-Javadi et~al.(2021)Moshref-Javadi, Hemmati \& Winkenbach}]{Moshref-Javadi2021AModels}
\bibinfo{author}{Moshref-Javadi, M.}, \bibinfo{author}{Hemmati, A.}, \& \bibinfo{author}{Winkenbach, M.} (\bibinfo{year}{2021}).
\newblock \bibinfo{title}{{A comparative analysis of synchronized truck-and-drone delivery models}}.
\newblock {\it \bibinfo{journal}{Computers {\&} Industrial Engineering}\/},  {\it \bibinfo{volume}{162}\/}, \bibinfo{pages}{107648}. \DOIprefix\doi{10.1016/j.cie.2021.107648}.
\bibitem[{Moshref-Javadi \& Winkenbach(2021)}]{Moshref-Javadi2021ApplicationsReview}
\bibinfo{author}{Moshref-Javadi, M.}, \& \bibinfo{author}{Winkenbach, M.} (\bibinfo{year}{2021}).
\newblock \bibinfo{title}{{Applications and research avenues for drone-based models in logistics: A classification and review}}.
\newblock {\it \bibinfo{journal}{Expert Systems with Applications}\/},  {\it \bibinfo{volume}{177}\/}, \bibinfo{pages}{114854}. \DOIprefix\doi{10.1016/j.eswa.2021.114854}.
\bibitem[{Murray \& Chu(2015)}]{Murray2015TheDelivery}
\bibinfo{author}{Murray, C.~C.}, \& \bibinfo{author}{Chu, A.~G.} (\bibinfo{year}{2015}).
\newblock \bibinfo{title}{{The flying sidekick traveling salesman problem: Optimization of drone-assisted parcel delivery}}.
\newblock {\it \bibinfo{journal}{Transportation Research Part C: Emerging Technologies}\/},  {\it \bibinfo{volume}{54}\/}, \bibinfo{pages}{86--109}. \DOIprefix\doi{10.1016/j.trc.2015.03.005}.
\bibitem[{Murray \& Raj(2020)}]{Murray2020TheDrones}
\bibinfo{author}{Murray, C.~C.}, \& \bibinfo{author}{Raj, R.} (\bibinfo{year}{2020}).
\newblock \bibinfo{title}{{The multiple flying sidekicks traveling salesman problem: Parcel delivery with multiple drones}}.
\newblock {\it \bibinfo{journal}{Transportation Research Part C: Emerging Technologies}\/},  {\it \bibinfo{volume}{110}\/}, \bibinfo{pages}{368--398}. \DOIprefix\doi{10.1016/j.trc.2019.11.003}.
\bibitem[{Pina-Pardo et~al.(2021)Pina-Pardo, Silva \& Smith}]{Pina-Pardo2021TheResupply}
\bibinfo{author}{Pina-Pardo, J.~C.}, \bibinfo{author}{Silva, D.~F.}, \& \bibinfo{author}{Smith, A.~E.} (\bibinfo{year}{2021}).
\newblock \bibinfo{title}{{The Traveling Salesman Problem with Release Dates and Drone Resupply}}.
\newblock {\it \bibinfo{journal}{Computers and Operations Research}\/},  {\it \bibinfo{volume}{129}\/}, \bibinfo{pages}{105170}. \DOIprefix\doi{10.1016/j.cor.2020.105170}.
\bibitem[{Pina-Pardo et~al.(2024{\natexlab{a}})Pina-Pardo, Silva, Smith \& Gatica}]{Pina-Pardo2024DynamicDelivery}
\bibinfo{author}{Pina-Pardo, J.~C.}, \bibinfo{author}{Silva, D.~F.}, \bibinfo{author}{Smith, A.~E.}, \& \bibinfo{author}{Gatica, R.~A.} (\bibinfo{year}{2024}{\natexlab{a}}).
\newblock \bibinfo{title}{{Dynamic vehicle routing problem with drone resupply for same-day delivery}}.
\newblock {\it \bibinfo{journal}{Transportation Research Part C: Emerging Technologies}\/},  {\it \bibinfo{volume}{162}\/}, \bibinfo{pages}{104611}. \DOIprefix\doi{10.1016/j.trc.2024.104611}.
\bibitem[{Pina-Pardo et~al.(2024{\natexlab{b}})Pina-Pardo, Silva, Smith \& Gatica}]{Pina-Pardo2024FleetDelivery}
\bibinfo{author}{Pina-Pardo, J.~C.}, \bibinfo{author}{Silva, D.~F.}, \bibinfo{author}{Smith, A.~E.}, \& \bibinfo{author}{Gatica, R.~A.} (\bibinfo{year}{2024}{\natexlab{b}}).
\newblock \bibinfo{title}{{Fleet resupply by drones for last-mile delivery}}.
\newblock {\it \bibinfo{journal}{European Journal of Operational Research16}\/},  {\it \bibinfo{volume}{316}\/}, \bibinfo{pages}{168--182}. \DOIprefix\doi{10.1016/j.ejor.2024.01.045}.
\bibitem[{Poikonen \& Golden(2020{\natexlab{a}})}]{Poikonen2020Multi-visitProblem}
\bibinfo{author}{Poikonen, S.}, \& \bibinfo{author}{Golden, B.} (\bibinfo{year}{2020}{\natexlab{a}}).
\newblock \bibinfo{title}{{Multi-visit drone routing problem}}.
\newblock {\it \bibinfo{journal}{Computers and Operations Research}\/},  {\it \bibinfo{volume}{113}\/}, \bibinfo{pages}{104802}. \DOIprefix\doi{10.1016/j.cor.2019.104802}.
\bibitem[{Poikonen \& Golden(2020{\natexlab{b}})}]{Poikonen2020TheProblem}
\bibinfo{author}{Poikonen, S.}, \& \bibinfo{author}{Golden, B.} (\bibinfo{year}{2020}{\natexlab{b}}).
\newblock \bibinfo{title}{{The mothership and drone routing problem}}.
\newblock {\it \bibinfo{journal}{INFORMS Journal on Computing}\/},  {\it \bibinfo{volume}{32}\/}, \bibinfo{pages}{249--262}. \DOIprefix\doi{10.1287/ijoc.2018.0879}.
\bibitem[{Prins et~al.(2014)Prins, Lacomme \& Prodhon}]{Prins2014Order-firstReview}
\bibinfo{author}{Prins, C.}, \bibinfo{author}{Lacomme, P.}, \& \bibinfo{author}{Prodhon, C.} (\bibinfo{year}{2014}).
\newblock \bibinfo{title}{{Order-first split-second methods for vehicle routing problems: A review}}.
\newblock {\it \bibinfo{journal}{Transportation Research Part C: Emerging Technologies}\/},  {\it \bibinfo{volume}{40}\/}, \bibinfo{pages}{179--200}. \DOIprefix\doi{10.1016/j.trc.2014.01.011}.
\bibitem[{Raj \& Murray(2020)}]{Raj2020TheSpeeds}
\bibinfo{author}{Raj, R.}, \& \bibinfo{author}{Murray, C.} (\bibinfo{year}{2020}).
\newblock \bibinfo{title}{{The multiple flying sidekicks traveling salesman problem with variable drone speeds}}.
\newblock {\it \bibinfo{journal}{Transportation Research Part C: Emerging Technologies}\/},  {\it \bibinfo{volume}{120}\/}, \bibinfo{pages}{102813}. \DOIprefix\doi{10.1016/j.trc.2020.102813}.
\bibitem[{Roberti \& Ruthmair(2021)}]{Roberti2021ExactDrone}
\bibinfo{author}{Roberti, R.}, \& \bibinfo{author}{Ruthmair, M.} (\bibinfo{year}{2021}).
\newblock \bibinfo{title}{{Exact methods for the traveling salesman problem with drone}}.
\newblock {\it \bibinfo{journal}{Transportation Science}\/},  {\it \bibinfo{volume}{55}\/}, \bibinfo{pages}{315--335}. \DOIprefix\doi{10.1287/TRSC.2020.1017}.
\bibitem[{Sacramento et~al.(2019)Sacramento, Pisinger \& Ropke}]{Sacramento2019AnDrones}
\bibinfo{author}{Sacramento, D.}, \bibinfo{author}{Pisinger, D.}, \& \bibinfo{author}{Ropke, S.} (\bibinfo{year}{2019}).
\newblock \bibinfo{title}{{An adaptive large neighborhood search metaheuristic for the vehicle routing problem with drones}}.
\newblock {\it \bibinfo{journal}{Transportation Research Part C: Emerging Technologies}\/},  {\it \bibinfo{volume}{102}\/}, \bibinfo{pages}{289--315}. \DOIprefix\doi{10.1016/j.trc.2019.02.018}.
\bibitem[{Sakharkar(2021)}]{Sakharkar2021A2ZDrone}
\bibinfo{author}{Sakharkar, A.} (\bibinfo{year}{2021}).
\newblock \bibinfo{title}{{A2Z Drone Delivery unveils a multi-drop dual-payload delivery drone}}.
\newblock \URLprefix \url{https://www.inceptivemind.com/a2z-drone-delivery-rdsx-multi-drop-dual-payload-delivery-drone/20842/}.
\bibitem[{Samet(2023)}]{Samet2023EcommerceOff}
\bibinfo{author}{Samet, A.} (\bibinfo{year}{2023}).
\newblock \bibinfo{title}{{Ecommerce growth worldwide will pick up before tapering off}}.
\newblock \URLprefix \url{https://www.insiderintelligence.com/content/ecommerce-growth-worldwide-will-pick-up-before-tapering-off}.
\bibitem[{Savelsbergh(1992)}]{Savelsbergh1992TheDuration}
\bibinfo{author}{Savelsbergh, M. W.~P.} (\bibinfo{year}{1992}).
\newblock \bibinfo{title}{{The vehicle routing problem with time windows: Minimizing route duration}}.
\newblock {\it \bibinfo{journal}{ORSA Journal on Computing}\/},  {\it \bibinfo{volume}{4}\/}, \bibinfo{pages}{146--154}. \DOIprefix\doi{10.1287/ijoc.4.2.146}.
\bibitem[{Schermer et~al.(2019{\natexlab{a}})Schermer, Moeini \& Wendt}]{Schermer2019AOperations}
\bibinfo{author}{Schermer, D.}, \bibinfo{author}{Moeini, M.}, \& \bibinfo{author}{Wendt, O.} (\bibinfo{year}{2019}{\natexlab{a}}).
\newblock \bibinfo{title}{{A hybrid VNS/Tabu search algorithm for solving the vehicle routing problem with drones and en route operations}}.
\newblock {\it \bibinfo{journal}{Computers and Operations Research}\/},  {\it \bibinfo{volume}{109}\/}, \bibinfo{pages}{134--158}. \DOIprefix\doi{10.1016/j.cor.2019.04.021}.
\bibitem[{Schermer et~al.(2019{\natexlab{b}})Schermer, Moeini \& Wendt}]{Schermer2019AVariants}
\bibinfo{author}{Schermer, D.}, \bibinfo{author}{Moeini, M.}, \& \bibinfo{author}{Wendt, O.} (\bibinfo{year}{2019}{\natexlab{b}}).
\newblock \bibinfo{title}{{A matheuristic for the vehicle routing problem with drones and its variants}}.
\newblock {\it \bibinfo{journal}{Transportation Research Part C: Emerging Technologies}\/},  {\it \bibinfo{volume}{106}\/}, \bibinfo{pages}{166--204}. \DOIprefix\doi{10.1016/j.trc.2019.06.016}.
\bibitem[{{Statista}(2023)}]{Statista2023Global2020-2027}
\bibinfo{author}{{Statista}} (\bibinfo{year}{2023}).
\newblock \bibinfo{title}{{Global last mile delivery market size 2020-2027}}.
\newblock \URLprefix \url{https://www.statista.com/statistics/1286612/last-mile-delivery-market-size-worldwide/}.
\bibitem[{{Supply Chain Dive}(2023)}]{SupplyChainDive2023ConsumersToday}
\bibinfo{author}{{Supply Chain Dive}} (\bibinfo{year}{2023}).
\newblock \bibinfo{title}{{Consumers want ultrafast delivery — and they want it today}}.
\newblock \URLprefix \url{https://www.supplychaindive.com/spons/consumers-want-ultrafast-delivery-and-they-want-it-today/646011/}.
\bibitem[{Tamke \& Buscher(2021)}]{Tamke2021ADrones}
\bibinfo{author}{Tamke, F.}, \& \bibinfo{author}{Buscher, U.} (\bibinfo{year}{2021}).
\newblock \bibinfo{title}{{A branch-and-cut algorithm for the vehicle routing problem with drones}}.
\newblock {\it \bibinfo{journal}{Transportation Research Part B: Methodological}\/},  {\it \bibinfo{volume}{144}\/}, \bibinfo{pages}{174--203}. \DOIprefix\doi{10.1016/j.trb.2020.11.011}.
\bibitem[{Vasani(2023)}]{Vasani2023FAAFlights}
\bibinfo{author}{Vasani, S.} (\bibinfo{year}{2023}).
\newblock \bibinfo{title}{{FAA clears UPS delivery drones for longer-range flights}}.
\newblock \URLprefix \url{https://www.theverge.com/2023/9/6/23861764/faa-ups-delivery-drones-amazon-prime-air}.
\bibitem[{V{\'{a}}squez et~al.(2021)V{\'{a}}squez, Angulo \& Klapp}]{Vasquez2021AnDecomposition}
\bibinfo{author}{V{\'{a}}squez, S.~A.}, \bibinfo{author}{Angulo, G.}, \& \bibinfo{author}{Klapp, M.~A.} (\bibinfo{year}{2021}).
\newblock \bibinfo{title}{{An exact solution method for the TSP with drone based on decomposition}}.
\newblock {\it \bibinfo{journal}{Computers and Operations Research}\/},  {\it \bibinfo{volume}{127}\/}, \bibinfo{pages}{105127}. \DOIprefix\doi{10.1016/j.cor.2020.105127}.
\bibitem[{Wang et~al.(2022)Wang, Wang, Hu, Xue \& Guan}]{Wang2022TruckdroneTime}
\bibinfo{author}{Wang, Y.}, \bibinfo{author}{Wang, Z.}, \bibinfo{author}{Hu, X.}, \bibinfo{author}{Xue, G.}, \& \bibinfo{author}{Guan, X.} (\bibinfo{year}{2022}).
\newblock \bibinfo{title}{{Truck–drone hybrid routing problem with time-dependent road travel time}}.
\newblock {\it \bibinfo{journal}{Transportation Research Part C: Emerging Technologies}\/},  {\it \bibinfo{volume}{144}\/}, \bibinfo{pages}{103901}. \DOIprefix\doi{10.1016/j.trc.2022.103901}.
\bibitem[{Wang \& Sheu(2019)}]{Wang2019VehicleDrones}
\bibinfo{author}{Wang, Z.}, \& \bibinfo{author}{Sheu, J.~B.} (\bibinfo{year}{2019}).
\newblock \bibinfo{title}{{Vehicle routing problem with drones}}.
\newblock {\it \bibinfo{journal}{Transportation Research Part B: Methodological}\/},  {\it \bibinfo{volume}{122}\/}, \bibinfo{pages}{350--364}. \DOIprefix\doi{10.1016/j.trb.2019.03.005}.
\bibitem[{Windras~Mara et~al.(2022)Windras~Mara, Rifai \& Sopha}]{WindrasMara2022AnDrops}
\bibinfo{author}{Windras~Mara, S.~T.}, \bibinfo{author}{Rifai, A.~P.}, \& \bibinfo{author}{Sopha, B.~M.} (\bibinfo{year}{2022}).
\newblock \bibinfo{title}{{An adaptive large neighborhood search heuristic for the flying sidekick traveling salesman problem with multiple drops}}.
\newblock {\it \bibinfo{journal}{Expert Systems with Applications}\/},  {\it \bibinfo{volume}{205}\/}, \bibinfo{pages}{117647}. \DOIprefix\doi{10.1016/j.eswa.2022.117647}.
\bibitem[{{Wingcopter}(2023)}]{Wingcopter2023Wingcopter198}
\bibinfo{author}{{Wingcopter}} (\bibinfo{year}{2023}).
\newblock \bibinfo{title}{{Wingcopter 198}}.
\newblock \URLprefix \url{https://wingcopter.com/wingcopter-198}.
\bibitem[{Yin et~al.(2023)Yin, Li, Wang, Ignatius, Cheng \& Wang}]{Yin2023AWindows}
\bibinfo{author}{Yin, Y.}, \bibinfo{author}{Li, D.}, \bibinfo{author}{Wang, D.}, \bibinfo{author}{Ignatius, J.}, \bibinfo{author}{Cheng, T.~C.}, \& \bibinfo{author}{Wang, S.} (\bibinfo{year}{2023}).
\newblock \bibinfo{title}{{A branch-and-price-and-cut algorithm for the truck-based drone delivery routing problem with time windows}}.
\newblock {\it \bibinfo{journal}{European Journal of Operational Research}\/},  {\it \bibinfo{volume}{309}\/}, \bibinfo{pages}{1125--1144}. \DOIprefix\doi{10.1016/j.ejor.2023.02.030}.
\bibitem[{Young~Jeong \& Lee(2023)}]{YoungJeong2023DroneAnalysis}
\bibinfo{author}{Young~Jeong, H.}, \& \bibinfo{author}{Lee, S.} (\bibinfo{year}{2023}).
\newblock \bibinfo{title}{{Drone routing problem with truck: Optimization and quantitative analysis}}.
\newblock {\it \bibinfo{journal}{Expert Systems with Applications}\/},  {\it \bibinfo{volume}{227}\/}. \DOIprefix\doi{10.1016/j.eswa.2023.120260}.
\bibitem[{Zhu et~al.(2022)Zhu, Boyles \& Unnikrishnan}]{Zhu2022Two-stageDrones}
\bibinfo{author}{Zhu, T.}, \bibinfo{author}{Boyles, S.~D.}, \& \bibinfo{author}{Unnikrishnan, A.} (\bibinfo{year}{2022}).
\newblock \bibinfo{title}{{Two-stage robust facility location problem with drones}}.
\newblock {\it \bibinfo{journal}{Transportation Research Part C: Emerging Technologies}\/},  {\it \bibinfo{volume}{137}\/}, \bibinfo{pages}{103563}. \DOIprefix\doi{10.1016/J.TRC.2022.103563}.

\end{thebibliography}

\begin{appendix}
\section{Proof of Propositions} \label{app:propositions}

\paragraph{Complexity of split algorithms} 

To find an optimal \ac{FSTSP} solution for a given TSP tour, the split algorithm of \cite{Kundu2021AnProblem} solves the shortest path problem in a weighted directed acyclic graph. Specifically, given the TSP tour $\sigma = (0,1,..., n, n+1)$ (whose nodes have been re-indexed based on their positions), the authors build the graph $\mathcal{G} = (\mathcal{N}, \mathcal{A})$, where $\mathcal{N} = \{0,1,..., n, n+1\}$ and $\mathcal{A} = \{(i,j): i \in \mathcal{N}\backslash \{n+1\} \land j \in \{i+1,...,n+1\}\}$. Each arc $(i,j) \in \mathcal{A}$ is called a \emph{forward-moving arc}. To solve the shortest path problem, the authors enumerate all forward-moving arc times associated with each $(i,j) \in \mathcal{A}$. Notably, each $(i,j) \in \mathcal{A}$ has associated a maximum of $j-i$ forward-moving arc times, defined by the maximum number of single-drop drone operations between nodes $i$ and $j$ (including the one where the drone is carried by the truck). As \cite{Kundu2021AnProblem} prove, the complete enumeration of all forward-moving arc times can be done in $\mathcal{O}(n^3)$ time for the \ac{FSTSP}. The interested reader can refer to \cite{Kundu2021AnProblem} for further details.

For the \ac{FSTSP-MD}, Proposition \ref{prop:complexity_extended_approach} states that, without the introduction of partition nodes, solving the shortest path problem by enumerating all forward-moving arc times requires $\mathcal{O}(2^{n})$ time.

\begin{proposition} \label{prop:complexity_extended_approach}
    For a given TSP tour, solving the shortest path problem by enumerating all forward-moving arcs requires $\mathcal{O}(2^{n})$ time.
\end{proposition}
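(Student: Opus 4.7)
\medskip

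\textbf{Proof plan.}

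The plan is to show that the number of distinct forward-moving arc times we must enumerate already grows like $2^n$, so any enumeration-based algorithm must take $\Omega(2^n)$ time. The proof will then follow by giving an explicit $\mathcal{O}(2^n)$ algorithm that does the enumeration, which matches the lower bound.

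First, I would characterize what a forward-moving arc time associated with $(i,j) \in \mathcal{A}$ looks like for the FSTSP-MD in the absence of the partition-node restriction. Exactly as in the illustrative example of Table~\ref{tab:example_partition_node} (for the subsequence $(4,5,6,7,8)$), a forward-moving arc time from $i$ to $j$ corresponds to a choice of \emph{which} customers among $\{i+1,\dots,j-1\}$ are served by the drone, with the remaining customers served by the truck. Since the drone visits its assigned customers in the order inherited from the TSP tour $\sigma$, each such operation is uniquely determined by a subset $S \subseteq \{i+1,\dots,j-1\}$. Including the case $S = \emptyset$ (the drone is carried by the truck), the number of forward-moving arc times associated with arc $(i,j)$ is exactly $2^{j-i-1}$.

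Next, I would sum this count over all arcs $(i,j) \in \mathcal{A}$ with $0 \le i < j \le n+1$. Letting $d = j-i$, the total number of forward-moving arc times is
\begin{equation*}
\sum_{i=0}^{n}\sum_{d=1}^{n+1-i} 2^{d-1} \;=\; \sum_{i=0}^{n}\bigl(2^{n+1-i}-1\bigr) \;=\; 2^{n+2} - (n+3).
\end{equation*}
This is $\Theta(2^n)$, which establishes the claimed bound for the enumeration step itself. To complete the proof, I would also argue that for each fixed arc $(i,j)$ the arc times can be computed incrementally as $S$ is enlarged by one customer (updating the truck and drone times in $\mathcal{O}(1)$ per new subset, by traversing the subsets in a Gray-code order), so the total work is $\mathcal{O}(2^n)$ and matches the lower bound.

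I do not expect any serious obstacle: the claim is essentially a counting argument, and the key observation (that the partition-node restriction is precisely what eliminates the need to enumerate all $2^{j-i-1}$ subsets per arc) was already illustrated concretely in Table~\ref{tab:example_partition_node}. The only subtlety is being explicit about what we enumerate: arcs times correspond to subsets of intermediate customers assigned to the drone, not to orderings, because the order on the drone side is fixed by $\sigma$.
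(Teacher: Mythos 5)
Your proposal is correct and follows essentially the same route as the paper: both arguments count the $2^{j-i-1}$ possible drone operations per arc $(i,j)$ (you via subsets of intermediate customers, the paper via $\sum_m \binom{j-i-1}{m}$, which is the same count) and sum to $2^{n+2}-n-3 = \Theta(2^n)$. Your added Gray-code argument that the enumeration is also achievable in $\mathcal{O}(2^n)$ time is a small refinement the paper does not bother with, but it does not change the substance of the proof.
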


\begin{proof}
    Consider the TSP tour $\sigma = (0,1,..., n, n+1)$, whose nodes have been re-indexed based on their positions. Assume that the maximum drone flight endurance constraint is removed. Now consider a subsequence $\sigma'_{ij} = (i, i+1, ..., j-1, j)$ of the TSP tour $\sigma$, where $i$ is the launch node and $j$ is the recovery node. The total number of operations where the drone performs exactly $m$ deliveries is given by ${j-i-1 \choose m}$. Therefore, when we relax the constraint for the maximum number of drone drops, the maximum number of possible drone operations (including the one where the drone is carried by the truck throughout the entire subsequence) is given by $\sum_{m=0}^{j-i-1} {j-i-1 \choose m} $ = $2^{j-i-1} $ (representing the total number of forward-moving arc times between nodes $i$ and $j$). 
    
    Now, since the TSP tour $\sigma$ has $n+2$ nodes, there is a maximum of $\sum_{i=0}^{n}\sum_{j=i+1}^{n+1} 2^{j-i-1} = 2^{n+2}-n-3 $ possible forward-moving arc times (i.e., coordinated truck-and-drone routes) for the given TSP tour $\sigma$. Therefore, a split algorithm without the introduction of partition nodes requires $\mathcal{O}(2^{n})$ time.
\end{proof}

\paragraph{Proof of Proposition \ref{proposition}} Let us introduce Definition \ref{def:optimal_partition_node} and Lemmas \ref{lemma_upper_bound} and \ref{lemma_relaxation} before proving Proposition \ref{proposition}.

\begin{definition} \label{def:optimal_partition_node}
    (Optimal partition node) Let $\sigma = (0,1,..., n, n+1)$ be a TSP tour whose nodes have been re-indexed based on their positions. Given a subsequence $\sigma'_{ij} = (i, i+1, ..., j-1, j)$ of $\sigma$, we refer to node $k^* \in N$ as an \emph{optimal partition node} if $k^* \in \argmin_k\{c_{i,j}^k: i \leq k <j \land k \leq i+D \land c_{i,j}^k \leq E \}$. Therefore, $c_{i,j} = c_{i,j}^{k^*}$.
\end{definition}

\begin{lemma} \label{lemma_upper_bound}
    (Upper bound) Assume that Algorithm \ref{alg:1} is applied to the TSP solution $\sigma = (0,1,..., n, n+1)$. At any time of the run, for each $j \in \{0,..., n+1\}$, we always have $T_{j} \geq \delta(0, j)$, where $\delta(0,j)$ represents the total time of the true shortest path from node $0$ to node $j$. If we ever find $T_{j} = \delta(0,j)$, the value of $T_{j}$ never changes in subsequent iterations.
\end{lemma}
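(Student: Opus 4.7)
The plan is to prove both assertions by induction on the total number of relaxation operations executed by Algorithm~\ref{alg:1}. For the \textbf{base case}, the initialization in Step~1 sets $T_0 = 0 = \delta(0,0)$ and $T_j = \infty$ for all $j \geq 1$, so the inequality $T_j \geq \delta(0,j)$ holds trivially at the start of the run. Thus the invariant is true prior to any relaxation.

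For the \textbf{inductive step}, I would assume the invariant $T_i \geq \delta(0,i)$ holds for all $i$ at the beginning of an arbitrary iteration, and then verify that it is preserved by either type of relaxation in the algorithm. There are exactly two: Step~5, which attempts $T_{i+1} \gets T_i + t_{i,i+1}$ (the truck carries the drone from node $i$ to node $i+1$), and Step~16, which attempts $T_j \gets T_i + c_{i,j}^k$ for some partition node $k$ passing the feasibility tests $k \leq i + D$ and $c_{i,j}^k \leq E$. The key observation is that in both cases, the candidate update corresponds to a \emph{feasible} sub-operation of the MD-SPP. Therefore, concatenating any true optimal path of length $\delta(0,i)$ from node $0$ to node $i$ with this sub-operation yields an admissible (though not necessarily optimal) path from node~$0$ to node~$j$. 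This gives the inequality
\begin{equation*}
\delta(0,j) \leq \delta(0,i) + c_{i,j}^k \leq T_i + c_{i,j}^k,
\end{equation*}
where the second inequality uses the inductive hypothesis. Hence, whether or not the relaxation actually fires, after the step we still have $T_j \geq \delta(0,j)$. The argument for Step~5 is identical, treating $k = i$ and replacing $c_{i,i+1}^{i}$ by $t_{i,i+1}$.

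For the \textbf{second claim}, suppose at some point during the run we have $T_j = \delta(0,j)$. Any future relaxation that might modify $T_j$ requires (by the guards in Steps~4 and~15) a strict inequality $T_i + c_{i,j}^k < T_j = \delta(0,j)$. However, by the invariant just established applied to node $i$, we know $T_i + c_{i,j}^k \geq \delta(0,j)$, which contradicts the guard. Therefore the value of $T_j$ cannot be overwritten.

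The \textbf{main subtlety} I anticipate is ensuring precise bookkeeping of feasibility: the definition of $\delta(0,j)$ must implicitly incorporate the same MD-SPP constraints (respecting $D$ and $E$, and requiring a valid partition node on each subsequence) that the algorithm enforces; otherwise the chain $\delta(0,i) + c_{i,j}^k \geq \delta(0,j)$ could fail. A careful formalization will simply observe that every sub-operation considered by the algorithm is legal per Definition~\ref{def:partition_node} and the endurance check, so any concatenation used in the argument above is itself a feasible collection of tuples as in Problem~\ref{prob:SPP}. The rest of the proof is a straightforward inductive bookkeeping exercise.
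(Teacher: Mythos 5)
Your proposal is correct and follows essentially the same route as the paper's proof: induction on the number of relaxation steps, the same base case, the inductive bound $T_j \geq T_i + c_{i,j}^k \geq \delta(0,i) + c_{i,j}^k \geq \delta(0,j)$, and the same monotonicity argument for why $T_j$ is never overwritten once it equals $\delta(0,j)$. The only difference is cosmetic: where the paper simply invokes the triangular inequality for shortest paths, you justify it explicitly by concatenating a feasible path to node $i$ with the feasible sub-operation on $(i,j)$, which is a slightly more careful rendering of the same step.
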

    
\begin{proof}
    We proceed by induction on the number of relaxation steps $r$.

    \textit{Base case.} In the initialization step (i.e., when $r=0$), we have that $T_{0}=0$ and $T_{j}=\infty$, for all $j \in \{1,..., n+1\}$ (see Step 1 in Algorithm \ref{alg:1}). Now, since $\delta(0,0) = 0$, we have that $T_{0} = \delta(0,0)$. Further, if node $j \in \{1,..., n+1\}$ is reachable from node $0$, then $\delta(0,j)$ will have a certain value; therefore, $T_{j} \geq \delta(0,j)$.

    \textit{Induction step.} Given $j \in \{1,..., n+1\}$, assume that $T_{j} \geq \delta(0,j)$ after $r-1$ relaxation steps. Let us now consider the $r^\text{th}$ relaxation step on the arc $(i,j)$, where $0 \leq i < j$. In this relaxation step, $T_{j}$ is updated to $T_{j} = T_{i} + c_{i,j}$, where $c_{i,j}$ represents the coordinated truck-and-drone time over the subsequence $(i,..., j)$ with the optimal partition. Since we already assume that the Lemma holds for the $r-1$ relaxation step, we have that $T_{i} \geq \delta(0,i)$. Therefore, $T_{j} = T_{i} + c_{i,j} \geq \delta(0,i) + c_{i,j}$. Further, since any shortest path satisfies triangular inequality, we have that $\delta(0,i) + c_{i,j} \geq \delta(0,j)$. Consequently, $T_{j} \geq \delta(0,i) + c_{i,j} \geq \delta(0,j)$.

    Note that, when we reach $T_{j} = \delta(0,j)$, we cannot reduce the value any further, as we just showed $T_{j} \geq \delta(0,j)$. Consequently, in subsequent relaxations, the value of $T_{j}$ remains unchanged.
\end{proof}

\begin{lemma} \label{lemma_relaxation}
    (Sequential relaxations) Assume that Algorithm \ref{alg:1} was solved for the TSP solution $\sigma = (0,1,..., n, n+1)$. Given a node $r \in \{1,..., n+1\}$, let $P_r = \{(\sigma'_{i_0,i_1}, k_0),...,(\sigma'_{i_{m-1},i_m}, k_{m-1})\}$ be the collection of $m$ tuples representing the shortest path from node $0$ to node $r$ (where node $i_m = r$). The sequential relaxation steps of the subsequences of $P_r$ produce $T_{i_m} = \delta(0, i_m)$, that is, $T_{r} = \delta(0, r)$.    
\end{lemma}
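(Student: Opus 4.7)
\textbf{Proof proposal for Lemma \ref{lemma_relaxation}.} The plan is to proceed by induction on the number $m$ of tuples in the collection $P_r$ representing the shortest path from node $0$ to node $r$, and to show that as Algorithm \ref{alg:1} sweeps through the outer loop over $i \in [0,n]$, each prefix subpath of $P_r$ is correctly relaxed in turn. The main idea is that because $i_0 < i_1 < \dots < i_m$ (the launch/recovery indices strictly increase along the TSP tour), and because the outer loop of Algorithm \ref{alg:1} iterates $i$ in increasing order, by the time $i$ reaches $i_{j-1}$ the algorithm has already finalized $T_{i_{j-1}}$ to the true shortest-path value $\delta(0,i_{j-1})$.

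For the base case $m=1$, the path consists of a single tuple $(\sigma'_{0,i_1},k_0)$ with $i_1=r$. When $i=0$ in the outer loop, the algorithm evaluates the arc time $c^{k_0}_{0,i_1}$ (either through the relaxation in Step 5 if $k_0=0$ and $i_1=1$, or through the relaxation in Step 16 otherwise), sets $T_{i_1}\leftarrow 0+c^{k_0}_{0,i_1}$, and this equals $\delta(0,i_1)$ by optimality of $P_r$. For the inductive step, assume the claim holds after the first $j-1$ relaxations, i.e., $T_{i_{j-1}}=\delta(0,i_{j-1})$; by Lemma \ref{lemma_upper_bound} this value can never decrease again. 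When the outer loop reaches $i=i_{j-1}$, the algorithm enumerates all feasible $(k,j')$ pairs with $k\leq i_{j-1}+D$ and $c^{k}_{i_{j-1},j'}\leq E$; in particular it evaluates the partition $k=k_{j-1}$ and recovery node $j'=i_j$, which is feasible by construction of $P_r$. The resulting candidate value $T_{i_{j-1}}+c^{k_{j-1}}_{i_{j-1},i_j}$ equals $\delta(0,i_{j-1})+c_{i_{j-1},i_j}=\delta(0,i_j)$ since the concatenation of an optimal prefix with the optimal $(i_{j-1},i_j)$ segment is itself optimal (shortest subpaths of shortest paths are shortest). Combined with Lemma \ref{lemma_upper_bound}, which guarantees $T_{i_j}\geq \delta(0,i_j)$ at all times, we obtain $T_{i_j}=\delta(0,i_j)$.

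One subtle point I expect to be the main obstacle is handling subsequences of $P_r$ in which the partition node coincides with the launch node (i.e., $k_{j-1}=i_{j-1}$, meaning the truck carries the drone), since Step 5 of Algorithm \ref{alg:1} relaxes such ``no-fly'' moves only one hop at a time, from $i$ to $i+1$. To address this, I would argue that any such tuple $(\sigma'_{i_{j-1},i_j},i_{j-1})$ with $i_j>i_{j-1}+1$ in $P_r$ can be equivalently decomposed into $i_j-i_{j-1}$ consecutive single-hop tuples of the same type without altering the total completion time; the induction then proceeds on this refined decomposition, and each single-hop relaxation is exactly what Step 5 performs when the outer loop visits the corresponding source node. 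Taking $j=m$ in the inductive conclusion yields $T_r=T_{i_m}=\delta(0,i_m)=\delta(0,r)$, which is the desired statement.
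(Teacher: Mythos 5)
Your proof is correct and follows essentially the same route as the paper's: induction on the tuples of $P_r$, combining the lower bound $T_{i_j} \geq \delta(0,i_j)$ from Lemma \ref{lemma_upper_bound} with the optimal substructure of shortest paths, so that relaxing the tuples in order fixes $T_{i_j} = \delta(0,i_j)$ (you additionally fold in the loop-ordering argument that the paper defers to the proof of Proposition \ref{proposition}). Your observation about truck-only segments spanning more than one hop --- which Algorithm \ref{alg:1} relaxes only one arc at a time in Step 5, so that a tuple $(\sigma'_{i_{j-1},i_j}, i_{j-1})$ with $i_j > i_{j-1}+1$ must be decomposed into single-hop tuples of equal total cost --- is a genuine refinement: the paper's proof tacitly treats every tuple of $P_r$ as relaxed in a single step, and your decomposition is exactly what is needed to make that assumption match the algorithm as written.
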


\begin{proof}
    We proceed by induction on the $m^\text{th}$ tuple of $P_r$, showing that after the sequential relaxations of the $m$ subsequences $\sigma'_{i_0,i_1},...,\sigma'_{i_{m-1},i_m}$, we have that $T_{i_m} = \delta(0, i_m)$.
    
    \textit{Base case.} If $m = 1$, then $P_r$ = $(\sigma'_{i_0,i_1}, k_0)$, with $\sigma'_{i_0,i_1} = (0,...,r)$. We know that $T_{i_m} \geq \delta(0,i_m)$ from Lemma \ref{lemma_upper_bound}. We also know that, by definition, node $k_0$ is an optimal partition node for subsequence $\sigma'_{i_0,i_1}$. Since we have only one tuple representing the shortest path, then $T_{i_m}$ = $\delta(0,i_m)$ after the relaxation step.
        
    \textit{Induction step.} Assume that the first $m-1$ subsequences $\sigma'_{i_0,i_1},...,\sigma'_{i_{m-2},i_{m-1}}$ have been relaxed sequentially based on the given order. 
    From Lemma \ref{lemma_upper_bound}, we have that $T_{i_{m-1}}$ = $\delta(0,i_{m-1})$ at the end of the $m-1$ relaxations. 
    The subsequence $\sigma'_{i_{m-1},i_m}$ is only relaxed after the relaxation of the previous subsequence $\sigma'_{i_{m-2},i_{m-1}}$. The definition of optimal partition node ensures that the relaxation of the subsequence $\sigma'_{i_{m-1},i_m}$ is optimal, and by Lemma \ref{lemma_upper_bound}, at the time of this call, we have that $T_{i_m} \geq \delta(0,i_m)$. 
    
    Since $P_r$ is the shortest path containing the subsequence $\sigma'_{i_{m-1},i_m}$, we also have that $\delta(0,i_m)$ = $\delta(0,i_{m-1})$ + $\delta(i_{m-1},i_{m})$. Therefore, $T_{i_{m}} = \delta(0,i_{m-1})$ + $\delta(i_{m-1},i_{m})$ = $\delta(0,i_{m})$. The value of $T_{i_m}$ will remain unchanged in subsequent relaxation steps due to Lemma \ref{lemma_upper_bound}.
\end{proof}

We now prove Proposition \ref{proposition}, i.e., Algorithm \ref{alg:1} solves Problem \ref{prob:SPP} in $\mathcal{O}(n^3)$ time for a given TSP tour.

\begin{proof}[Proof]
     Let $\sigma = (0,1,..., n, n+1)$ be a TSP tour whose nodes have been re-indexed based on their positions. For each subsequence $\sigma'_{ij} =(i, i+1, ..., j-1, j)$ of $\sigma$, we have a maximum of $j-i$ potential drone operations defined by the number of possible partition nodes (including the operation where the drone is carried by the truck throughout the entire subsequence). Thus, the maximum number of drone operations is $\sum_{i=0}^{n}\sum_{j=i+1}^{n+1} j - i = \frac{(n+1)(n+2)(n+3)}{6}$. Consequentially, Algorithm \ref{alg:1} has a $\mathcal{O}(n^3)$ complexity.

     Now, since the TSP tour $\sigma = (0, 1,..., n, n+1)$ is topologically sorted, we process it in order of their subsequences. By Lemma \ref{lemma_relaxation}, the collection of tuples $P$ obtained by Algorithm \ref{alg:1} represents the shortest path for $\sigma$, and Lemma \ref{lemma_upper_bound} ensures that $T_{n+1} = \delta(0,n+1)$.  
\end{proof}

\begin{corollary}
    For a given TSP tour $\sigma$, the shortest path has at most $n+1$ tuples, i.e., $|P| \leq n+1$. When $|P|=n+1$, the collection of tuples corresponds to the TSP tour itself (i.e., the truck makes all the deliveries, with the drone being idle).
\end{corollary}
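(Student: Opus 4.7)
The plan is to prove the bound by a simple counting argument on how the subsequences of $P$ partition the TSP tour, and then show that the equality case forces each subsequence to degenerate to a single arc, which by Definition \ref{def:partition_node} corresponds to the drone being carried.

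First, I would observe that by condition (ii) of Problem \ref{prob:SPP}, the subsequences $\sigma'_{i_0,i_1}, \sigma'_{i_1,i_2}, \ldots, \sigma'_{i_\ell,i_{\ell+1}}$ in $P$ concatenate to form the TSP tour $\sigma = (0,1,\ldots,n,n+1)$. This means $i_0 = 0$, $i_{\ell+1} = n+1$, and consecutive subsequences share endpoints. Moreover, since each subsequence $\sigma'_{i_j,i_{j+1}}$ is indexed by $i_j < i_{j+1}$, each subsequence spans at least one arc, i.e., $i_{j+1} - i_j \geq 1$.

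Next, I would sum the arc contributions. Letting $|P| = \ell + 1$ denote the number of tuples, we have
\[
n+1 \;=\; i_{\ell+1} - i_0 \;=\; \sum_{j=0}^{\ell} (i_{j+1} - i_j) \;\geq\; \sum_{j=0}^{\ell} 1 \;=\; \ell + 1 \;=\; |P|,
\]
which immediately yields $|P| \leq n+1$.

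Finally, for the equality case, note that $|P| = n+1$ forces $i_{j+1} - i_j = 1$ for every $j$, so each subsequence has the form $\sigma'_{i_j,i_j+1} = (i_j, i_j+1)$. Since the partition node $k_j$ must satisfy $i_j \leq k_j < i_{j+1} = i_j + 1$, the only option is $k_j = i_j$. By Definition \ref{def:partition_node}, $k_j = i_j$ means the drone is carried by the truck across that subsequence. Since this holds for every $j$, the truck traverses the entire TSP tour with the drone onboard and idle. There is no real obstacle here — the proof is a direct counting argument — so I would present it compactly in four or five lines.
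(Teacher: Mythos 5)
Your proof is correct. The paper states this corollary without an explicit proof, and your telescoping count $n+1=\sum_{j}(i_{j+1}-i_j)\geq |P|$, together with the observation that equality forces every subsequence to be a single arc with $k_j=i_j$ (the drone-carried case of Definition \ref{def:partition_node}), is exactly the intended justification; nothing is missing.
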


\section{Supplementary Results of Section \ref{sec:heuristic_performance}} \label{app:supplementary_results_performance}

Table \ref{tab:ExperimentSetting} summarizes the experiment setting used to compare \ac{MD-SPP-H} with the benchmark solution approaches developed for the \ac{FSTSP-MD} and \ac{FSTSP}.

\begin{table}[htbp]
    \centering
    \renewcommand{\arraystretch}{1.3}
    \caption{Summary of experiment setting used to compare \ac{MD-SPP-H} benchmark solutions approaches.}
    \resizebox{\textwidth}{!}{
    \begin{tabular}{rlrllll}
    \toprule
    \multicolumn{1}{l}{\multirow{2}[4]{*}{ Type  }} & \multirow{2}[4]{*}{Parameter } &       & \multicolumn{2}{c}{\textbf{\ac{FSTSP-MD}}} &       & \multicolumn{1}{c}{\multirow{2}[4]{*}{\textbf{\ac{FSTSP}}}} \\
\cmidrule{4-5}          &       &       & \multicolumn{1}{c}{\ac{IGH}} & \multicolumn{1}{c}{\ac{ALNS}} &       &  \\
    \midrule
    \multicolumn{1}{l}{\multirow{3}[2]{*}{General}} & Total number of instances   &       & 150   & 60    &       & 150 \\
          & Customer distribution  &       &   Uniform, single-and double-center  &   Uniform (B-G)  &       &  Uniform, single- and double-center  \\
          & Number of customers   &       &   $\{50,75,100,175,250\}$   &   $\{50,100\}$   &       &   $\{10, 11, ..., 20, 50,75,100,175,250\}$   \\
    \midrule
    \multicolumn{1}{l}{\multirow{8}[2]{*}{Vehicles}} & Drone eligibility   &       & 100\% & 80\%  &       & 100\% \\
          & Truck distance   &       &   Euclidean [unit]  &   Manhattan [km]  &       &   Euclidean [unit]  \\
          & Drone distance  &       &   Euclidean [unit]  &   Euclidean [km]  &       &   Euclidean [unit]  \\
          & Truck speed   &       & 1     &   40 km/h  &       & 1 \\
          & Drone speed ratio  &       & $\{1,2,3\}$   &   40 km/h  &       & 2 \\
          & Flight endurance  &       &   $E = \frac{2}{n\cdot(n+1)}\sum_{(i,j)\in A} d_{i,j}$   &   24 min    &       & $\infty$   \\
          & Number of drops   &       &   $\infty$   &   $\infty$   &       & 1 \\
          & Launch/retrieval time   &       &   0/0  &   30s/40s    &       &   0/0   \\
    \midrule
    \multicolumn{1}{l}{\multirow{2}[2]{*}{CPU}} & Run-time limit   &       &   600s   &   60s ($n=50$), 90s ($n = 100$)   &       &   --  \\
          & Runs  &       & 10    & 10    &       &   10 (for \ac{MD-SPP-H})  \\
    \bottomrule
    \end{tabular}%
    }
  \label{tab:ExperimentSetting}%
\end{table}%

\paragraph{\ac{MD-SPP-H} convergence results}

Given a \ac{FSTSP-MD} instance, we compute the following metric to understand the convergence of \ac{MD-SPP-H} over time: 
\begin{equation*}
        \text{Difference } (\%) = \frac{z^\text{MD-SPP-H}_\text{t} - z^\text{MD-SPP-H}_\text{best}}{z^\text{MD-SPP-H}_\text{best}} \times 100,
    \end{equation*}
where $z^\text{MD-SPP-H}_\text{best}$ is the value of the best solution found by \ac{MD-SPP-H} and $z^\text{MD-SPP-H}_\text{t}$ is the value of the best-so-far solution at time step $t$. Figure \ref{fig:FigBenchmarkFSTSPImpTrajectoryBest} reports the percentage difference between the best solution found so far and the best overall solution found by \ac{MD-SPP-H} (each line denotes one instance and the values are those achieved in the best of the ten runs). For instances with 100 customers, results show that the solutions found by \ac{MD-SPP-H} after one minute are within 2\% of the best overall solutions. Further, for 250-customer instances, the solutions found after 10 minutes are within 4\% of the best overall solutions.

\begin{figure}[ht]
    \centering
    
    \begin{subfigure}{0.7\textwidth}
        \centering
        
        \includegraphics[width=\linewidth]{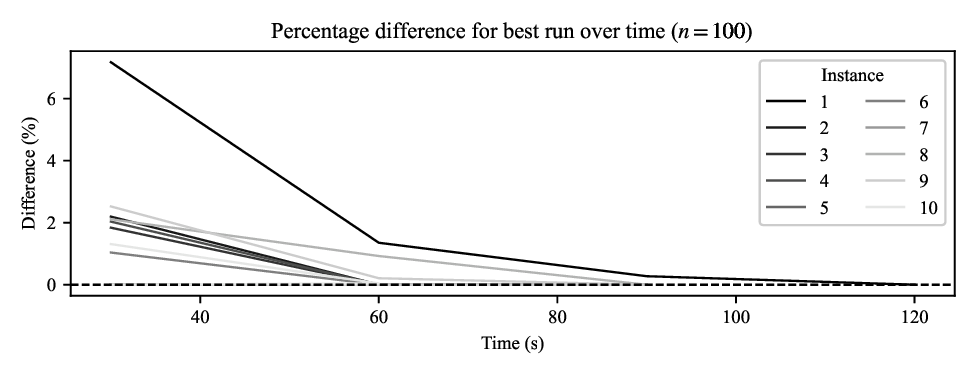}
    \end{subfigure}
    \vspace{-0.5cm}
    \begin{subfigure}{0.7\textwidth}
        \centering
        \vspace{-0.5cm}
        \includegraphics[width=\linewidth]{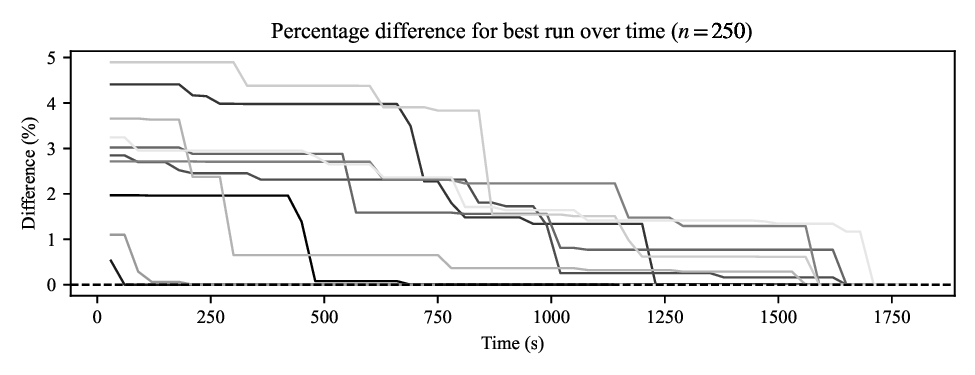}
    \end{subfigure}
    
    \caption{Average percentage difference over time for instances with $n= 100$ and $n=250$ customers (tracking solutions after 30 seconds)} 
    \label{fig:FigBenchmarkFSTSPImpTrajectoryBest}
\end{figure}

\paragraph{Comparison with the integer L-shaped method of \cite{Vasquez2021AnDecomposition}}

Table \ref{tab:comparison_vasquez} shows the comparison with the integer L-shaped method of \cite{Vasquez2021AnDecomposition}. The authors provided us with their result data using a maximum runtime of one hour. For the integer L-shaped method, Column ``\# Optimal” reports the number of instances solved to optimality. For the \ac{MD-SPP-H}, we report the results achieved by \ac{MD-SPP-H} for the best of the ten runs. Column ``\# Optimal” reports the number of instances where \ac{MD-SPP-H} finds the same optimal value as the integer L-shaped method; and Column ``\# Worse” (resp., \# Better) details the number of solutions where \ac{MD-SPP-H} achieves a worse (resp., better) solution than the integer L-shaped method. Further, considering those instances where the integer L-shaped method does not prove optimality, Column ``\# Equal” shows the number of solutions for which the integer L-shaped method and \ac{MD-SPP-H} find the same value. Column ``Opt. gap (\%)” is the average optimality gap reached by \ac{MD-SPP-H} over all instances solved to optimality by the integer L-shaped method; Column ``Diff (\%)” shows the average percentage difference between the best-known value achieved by the integer L-shaped method (i.e., the upper bound given by the solver) and the value of \ac{MD-SPP-H} over those instances not solved to optimality by the integer L-shaped method (a negative value indicates that \ac{MD-SPP-H} outperforms the integer L-shaped method); and Column ``Run time (s)” details the average run time (over 10 instances) needed by \ac{MD-SPP-H}.

For instances with up to 17 customers, for which the integer L-shaped method finds the optimal solution for all instances, Table \ref{tab:comparison_vasquez} shows that the \ac{MD-SPP-H} reaches the optimal solution for most of the instances in a very short run time of less than 0.2 seconds. The optimality gap over those instances is below or equal to 0.4\%. 
For instances with 18 and up to 20 customers, \ac{MD-SPP-H} performs worse only for 3 out of 30 instances. For most instances, \ac{MD-SPP-H} reaches the same solution value as the integer L-shaped method or even improves the solution. On average, \ac{MD-SPP-H} improves the solution quality by 1.4\% for instances that are not solved to optimality by the integer L-shaped method; see Column Diff (\%). 

\begin{table}[ht]
  \centering
  \renewcommand{\arraystretch}{1.2}
  \caption{Comparison between \ac{MD-SPP-H} and the integer L-shaped method of \cite{Vasquez2021AnDecomposition}.}
  \scalebox{0.7}{
    \begin{tabular}{crrrrrrrrrrr}
    \toprule
    \multirow{2}[3]{*}{Customers} & \multicolumn{1}{c}{\multirow{2}[3]{*}{Instances}} &       & \multicolumn{1}{c}{\textbf{L-shaped}} &       & \multicolumn{7}{c}{\textbf{MD-SPP-H}} \\
\cmidrule{4-4}\cmidrule{6-12}          &       &       & \multicolumn{1}{c}{\# Optimal} &       & \multicolumn{1}{c}{\# Optimal} & \multicolumn{1}{c}{\# Worse} & \multicolumn{1}{c}{\# Better} & \multicolumn{1}{c}{\# Equal} & \multicolumn{1}{c}{Opt. gap (\%)} & \multicolumn{1}{c}{Diff (\%)} & \multicolumn{1}{c}{Run time (s)} \\
    \midrule
    \multicolumn{1}{r}{10} & 10    &       & 10    &       & 8     & 2     & 0     & 0     & 0.10  & \--    & 0.04 \\
    \multicolumn{1}{r}{11} & 10    &       & 10    &       & 7     & 3     & 0     & 0     & 0.40  & \--    & 0.05 \\
    \multicolumn{1}{r}{12} & 10    &       & 10    &       & 7     & 3     & 0     & 0     & 0.23  & \--    & 0.06 \\
    \multicolumn{1}{r}{13} & 10    &       & 10    &       & 9     & 1     & 0     & 0     & 0.33  & \--    & 0.09 \\
    \multicolumn{1}{r}{14} & 10    &       & 10    &       & 9     & 1     & 0     & 0     & 0.05  & \--    & 0.10 \\
    \multicolumn{1}{r}{15} & 10    &       & 10    &       & 8     & 2     & 0     & 0     & 0.18  & \--    & 0.12 \\
    \multicolumn{1}{r}{16} & 10    &       & 10    &       & 9     & 1     & 0     & 0     & 0.12  & \--    & 0.15 \\
    \multicolumn{1}{r}{17} & 10    &       & 10    &       & 10    & 0     & 0     & 0     & 0.00  & \--    & 0.18 \\
    \multicolumn{1}{r}{18} & 10    &       & 6     &       & 4     & 2     & 2     & 2     & 0.25  & -1.61 & 0.28 \\
    \multicolumn{1}{r}{19} & 10    &       & 1     &       & 1     & 1     & 6     & 2     & 0.00  & -1.08 & 0.37 \\
    \multicolumn{1}{r}{20} & 10    &       & 1     &       & 1     & 0     & 8     & 1     & 0.00  & -1.51 & 0.43 \\
    \midrule
    \textbf{All} & \textbf{110} &       & \textbf{88} &       & \textbf{73} & \textbf{16} & \textbf{16} & \textbf{5} & \textbf{0.15} & \textbf{-1.40} & \textbf{0.17} \\
    \bottomrule
    \end{tabular}%
    }
  \label{tab:comparison_vasquez}%
\end{table}%

\clearpage  
\pagenumbering{gobble}

\appendix
\setcounter{section}{0} 
\renewcommand{\thesection}{Online Appendix \Alph{section}}

\section{Comparison with solution approaches for the FSTSP-MD} \label{app:onlieA}

\paragraph{Comparison with \ac{IGH} over single- and double-center instances} Tables 
\ref{tab:TabBenchmarkGonazelezSingleCenterFull} and \ref{tab:TabBenchmarkGonazelezDoubleCenterFull} report the results comparing \ac{MD-SPP-H} and the \ac{IGH} of \cite{Gonzalez-R2020Truck-dronePlanning}, considering the single- and double-center customer distributions and at least 50 customers. We observe similar, yet slightly reduced improvements for the two non-uniform distributions compared to the uniform distribution.

\begin{table}[htbp]
\centering
\caption{Results of the comparison of \ac{MD-SPP-H} and \ac{IGH} (based on the solution published by \citet{Gonzalez-R2020Truck-dronePlanning}), considering $n=50$ up to $250$ single-center distributed customers.}
\renewcommand{\arraystretch}{1.1}
\resizebox{\textwidth}{!}{
\begin{tabular}{ccrrrrrrrrrrrrrrrrrrrrrrrrrrr}
\toprule
\multirow{3}[6]{*}{$n$} & \multirow{3}[6]{*}{ID} &       & \multicolumn{8}{c}{Speed ratio 1}                             &       & \multicolumn{8}{c}{Speed ratio 2}                             &       & \multicolumn{8}{c}{Speed ratio 3} \\
\cmidrule{4-11}\cmidrule{13-20}\cmidrule{22-29}      &       &       & \multicolumn{2}{c}{MD-SPP-H} &       & \multicolumn{2}{c}{IGH} &       & \multicolumn{2}{c}{$\Delta$} &       & \multicolumn{2}{c}{MD-SPP-H} &       & \multicolumn{2}{c}{IGH} &       & \multicolumn{2}{c}{$\Delta$} &       & \multicolumn{2}{c}{MD-SPP-H} &       & \multicolumn{2}{c}{IGH} &       & \multicolumn{2}{c}{$\Delta$} \\
\cmidrule{4-5}\cmidrule{7-8}\cmidrule{10-11}\cmidrule{13-14}\cmidrule{16-17}\cmidrule{19-20}\cmidrule{22-23}\cmidrule{25-26}\cmidrule{28-29}      &       &       & \multicolumn{1}{c}{$z_\text{best}$} & \multicolumn{1}{c}{$z_\text{avg}$} &       & \multicolumn{1}{c}{$z_\text{best}$} & \multicolumn{1}{c}{$z_\text{avg}$} &       & \multicolumn{1}{c}{$\Delta_\text{best}$} & \multicolumn{1}{c}{$\Delta_\text{avg}$} &       & \multicolumn{1}{c}{$z_\text{best}$} & \multicolumn{1}{c}{$z_\text{avg}$} &       & \multicolumn{1}{c}{$z_\text{best}$} & \multicolumn{1}{c}{$z_\text{avg}$} &       & \multicolumn{1}{c}{$\Delta_\text{best}$} & \multicolumn{1}{c}{$\Delta_\text{avg}$} &       & \multicolumn{1}{c}{$z_\text{best}$} & \multicolumn{1}{c}{$z_\text{avg}$} &       & \multicolumn{1}{c}{$z_\text{best}$} & \multicolumn{1}{c}{$z_\text{avg}$} &       & \multicolumn{1}{c}{$\Delta_\text{best}$} & \multicolumn{1}{c}{$\Delta_\text{avg}$} \\
\midrule
\multirow{10}[2]{*}{50} & 71    &       & 516.8 & 528.8 &       & 571.8 & 618.3 &       & 9.6   & 14.5  &       & 350.1 & 353.6 &       & 370.9 & 371.5 &       & 5.6   & 4.8   &       & 275.5 & 278.3 &       & 292.6 & 292.6 &       & 5.8   & 4.9 \\
      & 72    &       & 608.3 & 619.2 &       & 744.8 & 793.5 &       & 18.3  & 22.0  &       & 413.4 & 416.9 &       & 501.0 & 505.6 &       & 17.5  & 17.5  &       & 375.0 & 378.2 &       & 445.5 & 449.8 &       & 15.8  & 15.9 \\
      & 73    &       & 454.2 & 457.3 &       & 492.7 & 535.0 &       & 7.8   & 14.5  &       & 319.4 & 323.3 &       & 348.4 & 348.4 &       & 8.3   & 7.2   &       & 254.2 & 256.7 &       & 315.4 & 315.6 &       & 19.4  & 18.7 \\
      & 74    &       & 626.5 & 631.1 &       & 758.3 & 792.1 &       & 17.4  & 20.3  &       & 428.1 & 432.0 &       & 490.5 & 490.5 &       & 12.7  & 11.9  &       & 357.8 & 360.1 &       & 446.0 & 448.1 &       & 19.8  & 19.6 \\
      & 75    &       & 791.0 & 806.2 &       & 830.8 & 857.6 &       & 4.8   & 6.0   &       & 616.7 & 621.5 &       & 648.5 & 654.3 &       & 4.9   & 5.0   &       & 561.7 & 571.9 &       & 629.3 & 629.3 &       & 10.7  & 9.1 \\
      & 76    &       & 663.5 & 672.7 &       & 749.9 & 812.6 &       & 11.5  & 17.2  &       & 442.0 & 443.9 &       & 527.0 & 528.3 &       & 16.1  & 16.0  &       & 376.5 & 378.1 &       & 439.1 & 447.2 &       & 14.2  & 15.5 \\
      & 77    &       & 656.6 & 671.5 &       & 700.7 & 749.6 &       & 6.3   & 10.4  &       & 511.0 & 513.9 &       & 578.8 & 580.3 &       & 11.7  & 11.5  &       & 551.8 & 556.9 &       & 543.8 & 543.9 &       & -1.5  & -2.4 \\
      & 78    &       & 733.1 & 737.2 &       & 791.1 & 807.6 &       & 7.3   & 8.7   &       & 504.2 & 506.2 &       & 529.0 & 536.6 &       & 4.7   & 5.7   &       & 498.4 & 502.0 &       & 466.6 & 478.4 &       & -6.8  & -4.9 \\
      & 79    &       & 634.1 & 638.2 &       & 663.2 & 690.2 &       & 4.4   & 7.5   &       & 535.2 & 536.7 &       & 546.0 & 548.0 &       & 2.0   & 2.1   &       & 511.1 & 513.8 &       & 499.9 & 501.4 &       & -2.2  & -2.5 \\
      & 80    &       & 731.6 & 746.0 &       & 839.8 & 889.4 &       & 12.9  & 16.1  &       & 558.1 & 563.4 &       & 657.9 & 667.8 &       & 15.2  & 15.6  &       & 476.5 & 478.3 &       & 619.0 & 621.0 &       & 23.0  & 23.0 \\
\midrule
\multirow{10}[2]{*}{75} & 81    &       & 919.7 & 939.9 &       & 1103.6 & 1139.8 &       & 16.7  & 17.5  &       & 686.1 & 700.5 &       & 742.4 & 752.5 &       & 7.6   & 6.9   &       & 567.5 & 579.6 &       & 678.4 & 680.0 &       & 16.3  & 14.8 \\
      & 82    &       & 686.1 & 692.2 &       & 870.3 & 905.5 &       & 21.2  & 23.6  &       & 470.2 & 483.4 &       & 556.8 & 569.9 &       & 15.6  & 15.2  &       & 380.1 & 386.7 &       & 444.2 & 453.8 &       & 14.4  & 14.8 \\
      & 83    &       & 742.4 & 756.3 &       & 919.6 & 951.4 &       & 19.3  & 20.5  &       & 516.5 & 520.5 &       & 647.7 & 654.0 &       & 20.2  & 20.4  &       & 486.4 & 489.8 &       & 545.2 & 547.2 &       & 10.8  & 10.5 \\
      & 84    &       & 762.5 & 780.8 &       & 924.8 & 967.6 &       & 17.5  & 19.3  &       & 540.3 & 551.2 &       & 602.3 & 604.1 &       & 10.3  & 8.8   &       & 477.7 & 487.8 &       & 493.3 & 497.5 &       & 3.2   & 1.9 \\
      & 85    &       & 765.9 & 773.4 &       & 896.8 & 917.6 &       & 14.6  & 15.7  &       & 575.8 & 587.7 &       & 643.0 & 646.5 &       & 10.4  & 9.1   &       & 500.8 & 511.3 &       & 558.6 & 560.3 &       & 10.4  & 8.8 \\
      & 86    &       & 939.2 & 957.5 &       & 1056.5 & 1089.0 &       & 11.1  & 12.1  &       & 732.2 & 745.7 &       & 753.7 & 759.1 &       & 2.8   & 1.8   &       & 666.0 & 672.4 &       & 652.6 & 652.8 &       & -2.0  & -3.0 \\
      & 87    &       & 850.2 & 866.1 &       & 971.2 & 1012.2 &       & 12.5  & 14.4  &       & 651.5 & 667.4 &       & 632.2 & 637.8 &       & -3.0  & -4.6  &       & 593.9 & 600.0 &       & 538.2 & 538.2 &       & -10.4 & -11.5 \\
      & 88    &       & 965.9 & 984.5 &       & 1037.7 & 1083.6 &       & 6.9   & 9.1   &       & 790.1 & 805.6 &       & 861.2 & 867.4 &       & 8.3   & 7.1   &       & 773.5 & 776.3 &       & 800.2 & 801.1 &       & 3.3   & 3.1 \\
      & 89    &       & 746.5 & 749.2 &       & 915.5 & 940.4 &       & 18.5  & 20.3  &       & 519.6 & 528.9 &       & 623.0 & 628.6 &       & 16.6  & 15.9  &       & 469.2 & 474.4 &       & 567.5 & 568.6 &       & 17.3  & 16.6 \\
      & 90    &       & 823.9 & 839.8 &       & 1034.6 & 1071.7 &       & 20.4  & 21.6  &       & 576.1 & 585.8 &       & 713.3 & 724.3 &       & 19.2  & 19.1  &       & 487.3 & 492.6 &       & 613.8 & 616.0 &       & 20.6  & 20.0 \\
\midrule
\multirow{10}[2]{*}{100} & 91    &       & 989.9 & 1009.1 &       & 1175.6 & 1205.6 &       & 15.8  & 16.3  &       & 725.4 & 740.6 &       & 803.8 & 810.1 &       & 9.8   & 8.6   &       & 732.5 & 735.6 &       & 738.5 & 741.7 &       & 0.8   & 0.8 \\
      & 92    &       & 1031.3 & 1038.0 &       & 1276.7 & 1318.0 &       & 19.2  & 21.2  &       & 715.1 & 739.0 &       & 862.0 & 870.1 &       & 17.0  & 15.1  &       & 590.3 & 595.1 &       & 764.2 & 764.2 &       & 22.8  & 22.1 \\
      & 93    &       & 1056.5 & 1065.9 &       & 1258.6 & 1309.1 &       & 16.1  & 18.6  &       & 762.9 & 774.4 &       & 912.4 & 921.9 &       & 16.4  & 16.0  &       & 704.8 & 716.2 &       & 853.6 & 853.6 &       & 17.4  & 16.1 \\
      & 94    &       & 1109.6 & 1126.2 &       & 1318.6 & 1385.8 &       & 15.9  & 18.7  &       & 776.7 & 793.5 &       & 880.4 & 883.7 &       & 11.8  & 10.2  &       & 795.1 & 807.5 &       & 756.4 & 758.5 &       & -5.1  & -6.5 \\
      & 95    &       & 852.6 & 859.6 &       & 1025.2 & 1090.3 &       & 16.8  & 21.2  &       & 633.5 & 637.4 &       & 727.7 & 735.4 &       & 12.9  & 13.3  &       & 529.8 & 537.6 &       & 603.2 & 609.0 &       & 12.2  & 11.7 \\
      & 96    &       & 982.7 & 998.1 &       & 1191.7 & 1254.1 &       & 17.5  & 20.4  &       & 687.8 & 704.3 &       & 822.8 & 828.6 &       & 16.4  & 15.0  &       & 607.0 & 615.7 &       & 719.0 & 720.6 &       & 15.6  & 14.6 \\
      & 97    &       & 1094.4 & 1106.3 &       & 1240.1 & 1259.2 &       & 11.7  & 12.1  &       & 808.5 & 818.2 &       & 846.1 & 852.6 &       & 4.4   & 4.0   &       & 698.6 & 707.9 &       & 772.3 & 776.1 &       & 9.5   & 8.8 \\
      & 98    &       & 1119.6 & 1140.2 &       & 1337.3 & 1357.5 &       & 16.3  & 16.0  &       & 947.0 & 957.9 &       & 1060.2 & 1064.8 &       & 10.7  & 10.0  &       & 876.3 & 886.6 &       & 1002.2 & 1016.6 &       & 12.6  & 12.8 \\
      & 99    &       & 787.5 & 798.3 &       & 999.4 & 1038.9 &       & 21.2  & 23.2  &       & 551.3 & 565.2 &       & 632.2 & 643.3 &       & 12.8  & 12.1  &       & 481.4 & 485.8 &       & 554.0 & 558.9 &       & 13.1  & 13.1 \\
      & 100   &       & 821.9 & 843.4 &       & 1120.8 & 1152.4 &       & 26.7  & 26.8  &       & 577.4 & 584.1 &       & 718.9 & 726.5 &       & 19.7  & 19.6  &       & 444.3 & 460.2 &       & 633.4 & 635.1 &       & 29.9  & 27.5 \\
\midrule
\multirow{10}[2]{*}{175} & 101   &       & 1261.7 & 1267.7 &       & 1590.1 & 1614.3 &       & 20.7  & 21.5  &       & 903.4 & 916.4 &       & 987.7 & 997.9 &       & 8.5   & 8.2   &       & 793.1 & 822.9 &       & 867.4 & 868.2 &       & 8.6   & 5.2 \\
      & 102   &       & 1232.0 & 1240.0 &       & 1509.7 & 1567.8 &       & 18.4  & 20.9  &       & 817.2 & 827.7 &       & 943.6 & 956.6 &       & 13.4  & 13.5  &       & 676.9 & 680.3 &       & 815.0 & 818.9 &       & 17.0  & 16.9 \\
      & 103   &       & 1213.7 & 1224.1 &       & 1547.1 & 1609.4 &       & 21.5  & 23.9  &       & 860.2 & 868.3 &       & 955.1 & 961.3 &       & 9.9   & 9.7   &       & 791.6 & 798.8 &       & 794.7 & 806.9 &       & 0.4   & 1.0 \\
      & 104   &       & 1222.7 & 1228.1 &       & 1580.5 & 1603.6 &       & 22.6  & 23.4  &       & 853.1 & 861.9 &       & 1036.4 & 1045.9 &       & 17.7  & 17.6  &       & 709.9 & 734.1 &       & 890.1 & 893.3 &       & 20.2  & 17.8 \\
      & 105   &       & 1331.1 & 1338.0 &       & 1597.0 & 1618.8 &       & 16.6  & 17.3  &       & 968.1 & 1000.5 &       & 1032.4 & 1038.3 &       & 6.2   & 3.6   &       & 869.3 & 908.1 &       & 919.3 & 923.5 &       & 5.4   & 1.7 \\
      & 106   &       & 1331.3 & 1336.3 &       & 1632.4 & 1727.0 &       & 18.4  & 22.6  &       & 1035.7 & 1042.1 &       & 1146.2 & 1160.7 &       & 9.6   & 10.2  &       & 894.1 & 908.0 &       & 1014.7 & 1031.9 &       & 11.9  & 12.0 \\
      & 107   &       & 1339.9 & 1351.7 &       & 1638.3 & 1686.8 &       & 18.2  & 19.9  &       & 983.7 & 992.2 &       & 1115.4 & 1118.7 &       & 11.8  & 11.3  &       & 900.8 & 918.7 &       & 1003.4 & 1004.8 &       & 10.2  & 8.6 \\
      & 108   &       & 1227.2 & 1252.2 &       & 1536.5 & 1583.8 &       & 20.1  & 20.9  &       & 814.2 & 818.7 &       & 945.8 & 956.2 &       & 13.9  & 14.4  &       & 697.3 & 709.0 &       & 766.9 & 774.0 &       & 9.1   & 8.4 \\
      & 109   &       & 1228.8 & 1243.1 &       & 1587.2 & 1627.6 &       & 22.6  & 23.6  &       & 902.1 & 908.1 &       & 1026.3 & 1037.2 &       & 12.1  & 12.4  &       & 784.7 & 794.4 &       & 859.5 & 869.3 &       & 8.7   & 8.6 \\
      & 110   &       & 1188.8 & 1213.7 &       & 1620.4 & 1668.6 &       & 26.6  & 27.3  &       & 880.1 & 901.2 &       & 967.2 & 983.2 &       & 9.0   & 8.3   &       & 714.9 & 737.5 &       & 787.4 & 792.2 &       & 9.2   & 6.9 \\
\midrule
\multirow{10}[2]{*}{200} & 111   &       & 1353.5 & 1357.6 &       & 1830.0 & 1859.6 &       & 26.0  & 27.0  &       & 906.9 & 910.8 &       & 1050.7 & 1056.3 &       & 13.7  & 13.8  &       & 750.9 & 755.4 &       & 832.8 & 838.9 &       & 9.8   & 9.9 \\
      & 112   &       & 1406.1 & 1409.3 &       & 1716.5 & 1789.3 &       & 18.1  & 21.2  &       & 926.4 & 957.6 &       & 1099.4 & 1103.6 &       & 15.7  & 13.2  &       & 710.2 & 722.2 &       & 846.0 & 854.6 &       & 16.1  & 15.5 \\
      & 113   &       & 1417.6 & 1418.1 &       & 1731.1 & 1761.2 &       & 18.1  & 19.5  &       & 929.4 & 943.2 &       & 1049.0 & 1059.6 &       & 11.4  & 11.0  &       & 759.1 & 768.2 &       & 810.1 & 814.7 &       & 6.3   & 5.7 \\
      & 114   &       & 1483.5 & 1485.3 &       & 1824.1 & 1879.0 &       & 18.7  & 20.9  &       & 1035.3 & 1038.6 &       & 1193.6 & 1201.6 &       & 13.3  & 13.6  &       & 871.2 & 885.2 &       & 978.6 & 982.8 &       & 11.0  & 9.9 \\
      & 115   &       & 1513.0 & 1522.3 &       & 1823.5 & 1887.8 &       & 17.0  & 19.4  &       & 1086.5 & 1091.4 &       & 1244.3 & 1261.4 &       & 12.7  & 13.5  &       & 914.3 & 916.0 &       & 1077.8 & 1081.0 &       & 15.2  & 15.3 \\
      & 116   &       & 1421.6 & 1434.4 &       & 1820.7 & 1843.2 &       & 21.9  & 22.2  &       & 961.5 & 976.1 &       & 1101.8 & 1110.5 &       & 12.7  & 12.1  &       & 737.4 & 748.4 &       & 857.8 & 861.0 &       & 14.0  & 13.1 \\
      & 117   &       & 1576.2 & 1576.9 &       & 1900.1 & 1969.1 &       & 17.0  & 19.9  &       & 1220.0 & 1220.0 &       & 1438.0 & 1449.1 &       & 15.2  & 15.8  &       & 1104.2 & 1113.4 &       & 1244.8 & 1250.0 &       & 11.3  & 10.9 \\
      & 118   &       & 1458.3 & 1467.9 &       & 1844.1 & 1919.7 &       & 20.9  & 23.5  &       & 1027.2 & 1030.9 &       & 1267.5 & 1276.0 &       & 19.0  & 19.2  &       & 873.1 & 881.1 &       & 1019.8 & 1042.8 &       & 14.4  & 15.5 \\
      & 119   &       & 1423.7 & 1428.1 &       & 1632.8 & 1720.4 &       & 12.8  & 17.0  &       & 1089.6 & 1094.0 &       & 1198.9 & 1216.4 &       & 9.1   & 10.1  &       & 939.3 & 947.8 &       & 1057.0 & 1061.4 &       & 11.1  & 10.7 \\
      & 120   &       & 1302.6 & 1313.9 &       & 1684.9 & 1740.6 &       & 22.7  & 24.5  &       & 891.2 & 891.6 &       & 1100.4 & 1110.5 &       & 19.0  & 19.7  &       & 704.8 & 707.4 &       & 869.8 & 878.5 &       & 19.0  & 19.5 \\
\midrule
\multicolumn{2}{c}{\textbf{Average}} &       &       &       &       &       &       &       & \textbf{16.7} & \textbf{18.9} &       &       &       &       &       &       &       & \textbf{11.9} & \textbf{11.4} &       &       &       &       &       &       &       & \textbf{10.8} & \textbf{10.1} \\
\bottomrule
\end{tabular}
}
\label{tab:TabBenchmarkGonazelezSingleCenterFull}%
\end{table}%

\begin{table}[htbp]
\centering
\caption{Results of the comparison of \ac{MD-SPP-H} and \ac{IGH} (based on the solution published by \citet{Gonzalez-R2020Truck-dronePlanning}), considering $n=50$ up to $250$ double-center distributed customers.}
\renewcommand{\arraystretch}{1.1}
\resizebox{\textwidth}{!}{
\begin{tabular}{ccrrrrrrrrrrrrrrrrrrrrrrrrrrr}
\toprule
\multirow{3}[6]{*}{$n$} & \multirow{3}[6]{*}{ID} &       & \multicolumn{8}{c}{Speed ratio 1}                             &       & \multicolumn{8}{c}{Speed ratio 2}                             &       & \multicolumn{8}{c}{Speed ratio 3} \\
\cmidrule{4-11}\cmidrule{13-20}\cmidrule{22-29}      &       &       & \multicolumn{2}{c}{MD-SPP-H} &       & \multicolumn{2}{c}{IGH} &       & \multicolumn{2}{c}{$\Delta$} &       & \multicolumn{2}{c}{MD-SPP-H} &       & \multicolumn{2}{c}{IGH} &       & \multicolumn{2}{c}{$\Delta$} &       & \multicolumn{2}{c}{MD-SPP-H} &       & \multicolumn{2}{c}{IGH} &       & \multicolumn{2}{c}{$\Delta$} \\
\cmidrule{4-5}\cmidrule{7-8}\cmidrule{10-11}\cmidrule{13-14}\cmidrule{16-17}\cmidrule{19-20}\cmidrule{22-23}\cmidrule{25-26}\cmidrule{28-29}      &       &       & \multicolumn{1}{c}{$z_\text{best}$} & \multicolumn{1}{c}{$z_\text{avg}$} &       & \multicolumn{1}{c}{$z_\text{best}$} & \multicolumn{1}{c}{$z_\text{avg}$} &       & \multicolumn{1}{c}{$\Delta_\text{best}$} & \multicolumn{1}{c}{$\Delta_\text{avg}$} &       & \multicolumn{1}{c}{$z_\text{best}$} & \multicolumn{1}{c}{$z_\text{avg}$} &       & \multicolumn{1}{c}{$z_\text{best}$} & \multicolumn{1}{c}{$z_\text{avg}$} &       & \multicolumn{1}{c}{$\Delta_\text{best}$} & \multicolumn{1}{c}{$\Delta_\text{avg}$} &       & \multicolumn{1}{c}{$z_\text{best}$} & \multicolumn{1}{c}{$z_\text{avg}$} &       & \multicolumn{1}{c}{$z_\text{best}$} & \multicolumn{1}{c}{$z_\text{avg}$} &       & \multicolumn{1}{c}{$\Delta_\text{best}$} & \multicolumn{1}{c}{$\Delta_\text{avg}$} \\
\midrule
\multirow{10}[2]{*}{50} & 71    &       & 997.2 & 1002.4 &       & 1109.9 & 1169.1 &       & 10.2  & 14.3  &       & 719.2 & 720.4 &       & 755.2 & 761.4 &       & 4.8   & 5.4   &       & 603.1 & 610.3 &       & 690.1 & 690.3 &       & 12.6  & 11.6 \\
      & 72    &       & 887.5 & 897.5 &       & 1043.7 & 1091.5 &       & 15.0  & 17.8  &       & 610.0 & 614.5 &       & 694.9 & 700.2 &       & 12.2  & 12.2  &       & 482.1 & 488.1 &       & 594.7 & 602.4 &       & 18.9  & 19.0 \\
      & 73    &       & 807.1 & 811.1 &       & 953.3 & 990.7 &       & 15.3  & 18.1  &       & 574.8 & 581.7 &       & 625.0 & 640.4 &       & 8.0   & 9.2   &       & 513.0 & 515.0 &       & 541.6 & 563.6 &       & 5.3   & 8.6 \\
      & 74    &       & 808.7 & 810.5 &       & 833.8 & 897.9 &       & 3.0   & 9.7   &       & 570.0 & 577.1 &       & 525.0 & 538.2 &       & -8.6  & -7.2  &       & 519.2 & 520.8 &       & 399.0 & 412.6 &       & -30.1 & -26.2 \\
      & 75    &       & 884.3 & 886.2 &       & 937.7 & 1004.0 &       & 5.7   & 11.7  &       & 633.8 & 645.8 &       & 570.2 & 583.6 &       & -11.2 & -10.7 &       & 580.5 & 584.6 &       & 409.9 & 416.8 &       & -41.6 & -40.3 \\
      & 76    &       & 946.5 & 952.4 &       & 1106.1 & 1230.6 &       & 14.4  & 22.6  &       & 662.8 & 668.3 &       & 733.9 & 742.3 &       & 9.7   & 10.0  &       & 533.7 & 543.9 &       & 632.4 & 634.9 &       & 15.6  & 14.3 \\
      & 77    &       & 819.4 & 820.5 &       & 1000.3 & 1030.2 &       & 18.1  & 20.3  &       & 578.7 & 582.7 &       & 627.1 & 637.5 &       & 7.7   & 8.6   &       & 527.8 & 529.3 &       & 534.0 & 535.6 &       & 1.2   & 1.2 \\
      & 78    &       & 861.3 & 875.1 &       & 945.6 & 1056.0 &       & 8.9   & 17.1  &       & 587.7 & 591.8 &       & 599.1 & 619.8 &       & 1.9   & 4.5   &       & 464.9 & 466.5 &       & 425.4 & 439.4 &       & -9.3  & -6.2 \\
      & 79    &       & 855.6 & 872.3 &       & 978.2 & 1016.0 &       & 12.5  & 14.1  &       & 594.7 & 597.9 &       & 645.9 & 653.2 &       & 7.9   & 8.5   &       & 483.4 & 484.9 &       & 548.4 & 558.0 &       & 11.8  & 13.1 \\
      & 80    &       & 937.5 & 947.9 &       & 1052.8 & 1140.1 &       & 10.9  & 16.9  &       & 675.7 & 681.9 &       & 711.1 & 719.4 &       & 5.0   & 5.2   &       & 590.4 & 598.0 &       & 598.3 & 613.6 &       & 1.3   & 2.5 \\
\midrule
\multirow{10}[2]{*}{75} & 81    &       & 928.7 & 938.2 &       & 1063.7 & 1116.5 &       & 12.7  & 16.0  &       & 620.4 & 636.0 &       & 646.9 & 663.3 &       & 4.1   & 4.1   &       & 512.5 & 522.2 &       & 491.4 & 503.6 &       & -4.3  & -3.7 \\
      & 82    &       & 1043.4 & 1046.5 &       & 1274.8 & 1314.9 &       & 18.2  & 20.4  &       & 697.8 & 708.9 &       & 760.8 & 769.5 &       & 8.3   & 7.9   &       & 549.4 & 563.6 &       & 610.9 & 639.9 &       & 10.1  & 11.9 \\
      & 83    &       & 998.1 & 1005.1 &       & 1104.1 & 1159.1 &       & 9.6   & 13.3  &       & 675.2 & 687.6 &       & 676.1 & 687.7 &       & 0.1   & 0.0   &       & 573.8 & 577.1 &       & 538.7 & 543.6 &       & -6.5  & -6.1 \\
      & 84    &       & 1140.3 & 1163.9 &       & 1497.4 & 1553.5 &       & 23.9  & 25.1  &       & 769.7 & 798.5 &       & 889.5 & 907.3 &       & 13.5  & 12.0  &       & 718.5 & 728.3 &       & 696.5 & 716.4 &       & -3.2  & -1.7 \\
      & 85    &       & 1177.6 & 1195.0 &       & 1509.4 & 1569.9 &       & 22.0  & 23.9  &       & 816.1 & 824.3 &       & 978.5 & 999.1 &       & 16.6  & 17.5  &       & 660.4 & 677.7 &       & 795.4 & 800.0 &       & 17.0  & 15.3 \\
      & 86    &       & 980.4 & 997.8 &       & 1104.2 & 1175.2 &       & 11.2  & 15.1  &       & 666.3 & 671.7 &       & 672.4 & 686.2 &       & 0.9   & 2.1   &       & 543.8 & 553.5 &       & 527.9 & 542.0 &       & -3.0  & -2.1 \\
      & 87    &       & 984.5 & 994.9 &       & 1084.6 & 1215.1 &       & 9.2   & 18.1  &       & 679.2 & 689.5 &       & 711.1 & 722.3 &       & 4.5   & 4.5   &       & 571.5 & 584.5 &       & 552.2 & 557.1 &       & -3.5  & -4.9 \\
      & 88    &       & 1281.3 & 1317.7 &       & 1417.4 & 1467.8 &       & 9.6   & 10.2  &       & 887.1 & 898.3 &       & 895.5 & 901.0 &       & 0.9   & 0.3   &       & 724.4 & 736.8 &       & 701.2 & 709.3 &       & -3.3  & -3.9 \\
      & 89    &       & 996.0 & 1015.4 &       & 1352.3 & 1407.1 &       & 26.3  & 27.8  &       & 681.3 & 694.2 &       & 785.5 & 791.6 &       & 13.3  & 12.3  &       & 547.2 & 556.1 &       & 629.8 & 641.1 &       & 13.1  & 13.3 \\
      & 90    &       & 971.6 & 988.3 &       & 1224.5 & 1253.8 &       & 20.6  & 21.2  &       & 646.2 & 654.8 &       & 801.4 & 817.5 &       & 19.4  & 19.9  &       & 548.2 & 552.7 &       & 602.7 & 619.4 &       & 9.0   & 10.8 \\
\midrule
\multirow{10}[2]{*}{100} & 91    &       & 1085.7 & 1101.5 &       & 1466.4 & 1524.4 &       & 26.0  & 27.7  &       & 741.8 & 747.5 &       & 852.8 & 867.7 &       & 13.0  & 13.8  &       & 587.0 & 594.4 &       & 634.0 & 639.9 &       & 7.4   & 7.1 \\
      & 92    &       & 1150.4 & 1168.4 &       & 1348.8 & 1382.5 &       & 14.7  & 15.5  &       & 795.8 & 802.8 &       & 802.5 & 811.8 &       & 0.8   & 1.1   &       & 682.3 & 691.4 &       & 599.4 & 620.5 &       & -13.8 & -11.4 \\
      & 93    &       & 988.1 & 1008.1 &       & 1253.3 & 1337.3 &       & 21.2  & 24.6  &       & 697.4 & 704.6 &       & 749.8 & 765.5 &       & 7.0   & 8.0   &       & 560.2 & 569.4 &       & 531.8 & 538.7 &       & -5.3  & -5.7 \\
      & 94    &       & 1131.6 & 1144.8 &       & 1436.4 & 1498.9 &       & 21.2  & 23.6  &       & 767.9 & 782.5 &       & 860.2 & 885.2 &       & 10.7  & 11.6  &       & 639.4 & 646.7 &       & 682.1 & 683.1 &       & 6.3   & 5.3 \\
      & 95    &       & 1184.7 & 1208.2 &       & 1497.6 & 1609.3 &       & 20.9  & 24.9  &       & 819.2 & 831.9 &       & 941.0 & 954.2 &       & 12.9  & 12.8  &       & 635.9 & 649.6 &       & 758.1 & 768.6 &       & 16.1  & 15.5 \\
      & 96    &       & 1156.0 & 1204.9 &       & 1465.3 & 1548.0 &       & 21.1  & 22.2  &       & 811.1 & 831.5 &       & 854.7 & 873.4 &       & 5.1   & 4.8   &       & 625.3 & 642.7 &       & 624.3 & 630.8 &       & -0.2  & -1.9 \\
      & 97    &       & 1281.7 & 1290.4 &       & 1724.0 & 1791.3 &       & 25.7  & 28.0  &       & 857.8 & 866.8 &       & 1057.8 & 1082.7 &       & 18.9  & 19.9  &       & 713.7 & 724.5 &       & 838.9 & 848.6 &       & 14.9  & 14.6 \\
      & 98    &       & 1095.5 & 1103.6 &       & 1458.3 & 1536.9 &       & 24.9  & 28.2  &       & 752.4 & 766.4 &       & 910.5 & 930.6 &       & 17.4  & 17.6  &       & 646.6 & 653.6 &       & 741.8 & 753.6 &       & 12.8  & 13.3 \\
      & 99    &       & 1107.2 & 1118.4 &       & 1380.2 & 1483.9 &       & 19.8  & 24.6  &       & 748.2 & 764.5 &       & 838.6 & 860.4 &       & 10.8  & 11.1  &       & 621.1 & 630.2 &       & 626.3 & 634.7 &       & 0.8   & 0.7 \\
      & 100   &       & 1240.8 & 1243.5 &       & 1422.5 & 1541.1 &       & 12.8  & 19.3  &       & 829.6 & 846.4 &       & 967.4 & 987.6 &       & 14.2  & 14.3  &       & 664.7 & 678.4 &       & 736.7 & 756.0 &       & 9.8   & 10.3 \\
\midrule
\multirow{10}[2]{*}{175} & 101   &       & 1277.9 & 1285.0 &       & 1936.3 & 2021.4 &       & 34.0  & 36.4  &       & 857.2 & 857.8 &       & 1138.2 & 1155.0 &       & 24.7  & 25.7  &       & 696.2 & 702.4 &       & 892.7 & 901.0 &       & 22.0  & 22.0 \\
      & 102   &       & 1486.0 & 1489.5 &       & 2090.1 & 2193.0 &       & 28.9  & 32.1  &       & 1003.5 & 1011.7 &       & 1192.8 & 1209.3 &       & 15.9  & 16.3  &       & 760.4 & 783.5 &       & 891.9 & 901.0 &       & 14.7  & 13.0 \\
      & 103   &       & 1492.4 & 1497.9 &       & 2154.4 & 2201.3 &       & 30.7  & 32.0  &       & 1043.9 & 1062.1 &       & 1195.4 & 1213.6 &       & 12.7  & 12.5  &       & 807.0 & 820.5 &       & 879.3 & 889.0 &       & 8.2   & 7.7 \\
      & 104   &       & 1480.6 & 1488.3 &       & 2064.8 & 2105.7 &       & 28.3  & 29.3  &       & 985.4 & 997.3 &       & 1182.0 & 1211.3 &       & 16.6  & 17.7  &       & 766.1 & 773.4 &       & 897.7 & 912.7 &       & 14.7  & 15.3 \\
      & 105   &       & 1587.0 & 1592.4 &       & 2146.2 & 2197.9 &       & 26.1  & 27.5  &       & 1067.9 & 1081.1 &       & 1287.4 & 1302.3 &       & 17.1  & 17.0  &       & 835.5 & 847.8 &       & 968.5 & 984.1 &       & 13.7  & 13.9 \\
      & 106   &       & 1505.0 & 1543.9 &       & 1989.3 & 2146.3 &       & 24.3  & 28.1  &       & 1055.3 & 1063.6 &       & 1174.7 & 1199.1 &       & 10.2  & 11.3  &       & 796.2 & 808.5 &       & 857.6 & 874.2 &       & 7.2   & 7.5 \\
      & 107   &       & 1327.1 & 1337.3 &       & 1864.8 & 1945.5 &       & 28.8  & 31.3  &       & 931.2 & 941.6 &       & 1072.5 & 1098.2 &       & 13.2  & 14.3  &       & 719.6 & 729.0 &       & 801.4 & 815.3 &       & 10.2  & 10.6 \\
      & 108   &       & 1550.6 & 1559.1 &       & 2174.0 & 2240.0 &       & 28.7  & 30.4  &       & 1043.8 & 1064.8 &       & 1253.6 & 1285.9 &       & 16.7  & 17.2  &       & 814.8 & 830.5 &       & 975.4 & 981.1 &       & 16.5  & 15.4 \\
      & 109   &       & 1456.4 & 1461.3 &       & 1987.1 & 2064.3 &       & 26.7  & 29.2  &       & 979.6 & 992.0 &       & 1160.4 & 1193.7 &       & 15.6  & 16.9  &       & 767.2 & 781.1 &       & 926.9 & 932.7 &       & 17.2  & 16.3 \\
      & 110   &       & 1576.8 & 1581.9 &       & 2116.1 & 2194.4 &       & 25.5  & 27.9  &       & 1069.7 & 1078.1 &       & 1237.3 & 1250.0 &       & 13.5  & 13.8  &       & 845.8 & 853.2 &       & 828.4 & 842.7 &       & -2.1  & -1.3 \\
\midrule
\multirow{10}[2]{*}{200} & 111   &       & 1662.1 & 1679.6 &       & 2534.2 & 2595.7 &       & 34.4  & 35.3  &       & 1180.2 & 1181.6 &       & 1511.2 & 1527.0 &       & 21.9  & 22.6  &       & 859.1 & 882.3 &       & 1178.8 & 1189.1 &       & 27.1  & 25.8 \\
      & 112   &       & 1668.3 & 1673.5 &       & 2467.7 & 2574.0 &       & 32.4  & 35.0  &       & 1141.5 & 1145.9 &       & 1406.7 & 1424.8 &       & 18.9  & 19.6  &       & 891.2 & 892.5 &       & 1021.2 & 1029.7 &       & 12.7  & 13.3 \\
      & 113   &       & 1690.7 & 1693.6 &       & 2111.8 & 2261.6 &       & 19.9  & 25.1  &       & 1127.4 & 1133.4 &       & 1272.4 & 1305.5 &       & 11.4  & 13.2  &       & 883.2 & 893.5 &       & 925.5 & 933.9 &       & 4.6   & 4.3 \\
      & 114   &       & 1638.0 & 1668.3 &       & 2327.8 & 2415.9 &       & 29.6  & 30.9  &       & 1130.5 & 1131.7 &       & 1325.0 & 1343.9 &       & 14.7  & 15.8  &       & 855.3 & 876.7 &       & 1034.7 & 1040.7 &       & 17.3  & 15.8 \\
      & 115   &       & 1882.8 & 1890.6 &       & 2552.4 & 2694.7 &       & 26.2  & 29.8  &       & 1297.9 & 1303.0 &       & 1460.3 & 1474.8 &       & 11.1  & 11.6  &       & 966.6 & 990.7 &       & 1092.1 & 1100.3 &       & 11.5  & 10.0 \\
      & 116   &       & 1730.3 & 1738.2 &       & 2466.7 & 2586.2 &       & 29.9  & 32.8  &       & 1126.2 & 1134.8 &       & 1358.4 & 1380.5 &       & 17.1  & 17.8  &       & 879.3 & 881.0 &       & 944.1 & 954.2 &       & 6.9   & 7.7 \\
      & 117   &       & 1696.5 & 1698.1 &       & 2539.6 & 2600.2 &       & 33.2  & 34.7  &       & 1136.8 & 1138.0 &       & 1447.9 & 1468.5 &       & 21.5  & 22.5  &       & 864.1 & 865.6 &       & 1072.8 & 1080.6 &       & 19.5  & 19.9 \\
      & 118   &       & 1775.1 & 1785.3 &       & 2585.9 & 2635.2 &       & 31.4  & 32.3  &       & 1220.9 & 1223.5 &       & 1438.9 & 1464.9 &       & 15.1  & 16.5  &       & 931.3 & 942.0 &       & 1033.4 & 1045.1 &       & 9.9   & 9.9 \\
      & 119   &       & 1845.8 & 1849.4 &       & 2600.6 & 2664.3 &       & 29.0  & 30.6  &       & 1261.8 & 1266.8 &       & 1519.8 & 1542.1 &       & 17.0  & 17.9  &       & 949.6 & 958.8 &       & 1171.7 & 1179.7 &       & 19.0  & 18.7 \\
      & 120   &       & 1734.3 & 1734.9 &       & 2450.3 & 2508.0 &       & 29.2  & 30.8  &       & 1183.5 & 1194.9 &       & 1399.4 & 1436.1 &       & 15.4  & 16.8  &       & 913.8 & 916.2 &       & 976.2 & 984.0 &       & 6.4   & 6.9 \\
\midrule
\multicolumn{2}{c}{\textbf{Average}} &       &       &       &       &       &       &       & \textbf{21.3} & \textbf{24.3} &       &       &       &       &       &       &       & \textbf{11.0} & \textbf{11.6} &       &       &       &       &       &       &       & \textbf{6.3} & \textbf{6.5} \\
\bottomrule
\end{tabular}
}
\label{tab:TabBenchmarkGonazelezDoubleCenterFull}%
\end{table}%

\section{Comparison with solution approaches for the FSTSP} \label{app:onlieB}

\paragraph{Comparison with EP-All and SPP-All over single- and double-center instances} Tables \ref{tab:TabBenchmarkAgatzKunduSC} and \ref{tab:TabBenchmarkAgatzKunduDC} report the results achieved by \ac{MD-SPP-H}, EP-All heuristic and SPP-All heuristic, considering the single- and double-center customer distributions and at least 50 customers. The observed improvements are modestly lower than those noted for uniform customer distributions.



\begin{table}[htbp]
  \centering
  \caption{Results of the comparison of \ac{MD-SPP-H} and \ac{EP-All} and \ac{SPP-All} for various customers ranging from $n=50$ to $250$ and single-center customer distributions.}
    \renewcommand{\arraystretch}{1.1}
    \scalebox{0.6}{
\begin{tabular}{ccrrrrrrrrrrrrrr}
\toprule
\multirow{2}[4]{*}{$n$} & \multirow{2}[4]{*}{ID} &       & \multicolumn{3}{c}{MD-SPP-H} &       & \multicolumn{4}{c}{EP-All}    &       & \multicolumn{4}{c}{SPP-All} \\
\cmidrule{4-6}\cmidrule{8-11}\cmidrule{13-16}      &       &       & \multicolumn{1}{c}{$z_\text{best}$} & \multicolumn{1}{c}{$z_\text{avg}$} & \multicolumn{1}{c}{Avg. runtime (s)} &       & \multicolumn{1}{c}{$z$} & \multicolumn{1}{c}{Runtime (s)} & \multicolumn{1}{c}{$\Delta_\text{best}$} & \multicolumn{1}{c}{$\Delta_\text{avg}$} &       & \multicolumn{1}{c}{$z$} & \multicolumn{1}{c}{Runtime (s)} & \multicolumn{1}{c}{$\Delta_\text{best}$} & \multicolumn{1}{c}{$\Delta_\text{avg}$} \\
\midrule
\multirow{10}[2]{*}{50} & \multicolumn{1}{r}{71} &       & 428.2 & 432.7 & 7.6   &       & 446.6 & 1.9   & 4.1   & 3.1   &       & 446.6 & 0.0   & 4.1   & 3.1 \\
      & \multicolumn{1}{r}{72} &       & 483.0 & 484.2 & 3.3   &       & 484.3 & 2.0   & 0.3   & 0.0   &       & 484.3 & 0.1   & 0.3   & 0.0 \\
      & \multicolumn{1}{r}{73} &       & 361.5 & 365.9 & 7.7   &       & 376.8 & 2.1   & 4.0   & 2.9   &       & 376.8 & 0.0   & 4.0   & 2.9 \\
      & \multicolumn{1}{r}{74} &       & 528.2 & 532.1 & 7.2   &       & 582.5 & 1.8   & 9.3   & 8.6   &       & 550.7 & 0.1   & 4.1   & 3.4 \\
      & \multicolumn{1}{r}{75} &       & 552.1 & 561.1 & 9.0   &       & 589.1 & 2.3   & 6.3   & 4.8   &       & 565.4 & 0.1   & 2.3   & 0.8 \\
      & \multicolumn{1}{r}{76} &       & 539.0 & 550.0 & 6.5   &       & 573.7 & 1.9   & 6.0   & 4.1   &       & 563.5 & 0.1   & 4.4   & 2.4 \\
      & \multicolumn{1}{r}{77} &       & 425.3 & 430.6 & 8.5   &       & 442.1 & 3.1   & 3.8   & 2.6   &       & 442.1 & 0.1   & 3.8   & 2.6 \\
      & \multicolumn{1}{r}{78} &       & 541.4 & 557.0 & 7.8   &       & 558.1 & 2.0   & 3.0   & 0.2   &       & 558.1 & 0.0   & 3.0   & 0.2 \\
      & \multicolumn{1}{r}{79} &       & 432.7 & 441.5 & 6.2   &       & 465.4 & 2.6   & 7.0   & 5.1   &       & 451.6 & 0.1   & 4.2   & 2.2 \\
      & \multicolumn{1}{r}{80} &       & 550.8 & 558.5 & 6.2   &       & 573.8 & 3.5   & 4.0   & 2.7   &       & 571.1 & 0.1   & 3.6   & 2.2 \\
\midrule
\multirow{10}[2]{*}{75} & \multicolumn{1}{r}{81} &       & 695.6 & 715.3 & 32.5  &       & 767.7 & 11.3  & 9.4   & 6.8   &       & 701.3 & 0.2   & 0.8   & -2.0 \\
      & \multicolumn{1}{r}{82} &       & 560.5 & 580.1 & 28.4  &       & 624.4 & 10.4  & 10.2  & 7.1   &       & 624.4 & 0.2   & 10.2  & 7.1 \\
      & \multicolumn{1}{r}{83} &       & 616.6 & 624.8 & 34.6  &       & 631.9 & 11.0  & 2.4   & 1.1   &       & 631.9 & 0.2   & 2.4   & 1.1 \\
      & \multicolumn{1}{r}{84} &       & 684.2 & 706.5 & 34.8  &       & 715.5 & 14.3  & 4.4   & 1.3   &       & 710.8 & 0.2   & 3.7   & 0.6 \\
      & \multicolumn{1}{r}{85} &       & 580.3 & 589.8 & 26.0  &       & 588.1 & 10.4  & 1.3   & -0.3  &       & 585.6 & 0.2   & 0.9   & -0.7 \\
      & \multicolumn{1}{r}{86} &       & 720.9 & 737.4 & 45.1  &       & 769.3 & 14.9  & 6.3   & 4.1   &       & 762.5 & 0.5   & 5.5   & 3.3 \\
      & \multicolumn{1}{r}{87} &       & 687.0 & 702.0 & 29.2  &       & 739.2 & 11.7  & 7.1   & 5.0   &       & 738.3 & 0.2   & 6.9   & 4.9 \\
      & \multicolumn{1}{r}{88} &       & 689.9 & 699.9 & 34.1  &       & 695.3 & 11.7  & 0.8   & -0.7  &       & 692.5 & 0.2   & 0.4   & -1.1 \\
      & \multicolumn{1}{r}{89} &       & 639.1 & 650.0 & 30.7  &       & 697.4 & 10.7  & 8.4   & 6.8   &       & 665.4 & 0.2   & 4.0   & 2.3 \\
      & \multicolumn{1}{r}{90} &       & 693.1 & 711.9 & 32.8  &       & 776.8 & 16.3  & 10.8  & 8.4   &       & 733.6 & 0.2   & 5.5   & 3.0 \\
\midrule
\multirow{10}[2]{*}{100} & \multicolumn{1}{r}{91} &       & 778.5 & 793.9 & 78.1  &       & 827.4 & 37.9  & 5.9   & 4.0   &       & 826.5 & 0.8   & 5.8   & 3.9 \\
      & \multicolumn{1}{r}{92} &       & 854.7 & 877.0 & 103.1 &       & 890.7 & 76.6  & 4.0   & 1.5   &       & 887.2 & 1.5   & 3.7   & 1.1 \\
      & \multicolumn{1}{r}{93} &       & 844.9 & 861.5 & 116.0 &       & 934.4 & 44.0  & 9.6   & 7.8   &       & 934.3 & 0.7   & 9.6   & 7.8 \\
      & \multicolumn{1}{r}{94} &       & 947.8 & 971.0 & 117.6 &       & 1015.3 & 78.2  & 6.7   & 4.4   &       & 997.8 & 1.5   & 5.0   & 2.7 \\
      & \multicolumn{1}{r}{95} &       & 738.9 & 746.0 & 54.4  &       & 752.7 & 62.0  & 1.8   & 0.9   &       & 748.9 & 1.5   & 1.3   & 0.4 \\
      & \multicolumn{1}{r}{96} &       & 809.5 & 829.7 & 113.7 &       & 866.1 & 41.3  & 6.5   & 4.2   &       & 861.8 & 0.7   & 6.1   & 3.7 \\
      & \multicolumn{1}{r}{97} &       & 831.6 & 845.5 & 120.1 &       & 851.4 & 70.6  & 2.3   & 0.7   &       & 848.0 & 1.5   & 1.9   & 0.3 \\
      & \multicolumn{1}{r}{98} &       & 785.7 & 803.5 & 131.3 &       & 840.9 & 52.5  & 6.6   & 4.5   &       & 833.1 & 1.6   & 5.7   & 3.6 \\
      & \multicolumn{1}{r}{99} &       & 662.1 & 678.2 & 95.8  &       & 679.7 & 45.3  & 2.6   & 0.2   &       & 679.1 & 0.8   & 2.5   & 0.1 \\
      & \multicolumn{1}{r}{100} &       & 758.3 & 768.1 & 95.9  &       & 797.2 & 61.3  & 4.9   & 3.7   &       & 793.9 & 0.9   & 4.5   & 3.3 \\
\midrule
\multirow{10}[2]{*}{175} & \multicolumn{1}{r}{101} &       & 1025.9 & 1049.3 & 791.2 &       & 1051.6 & 848.8 & 2.4   & 0.2   &       & 1021.8 & 9.5   & -0.4  & -2.7 \\
      & \multicolumn{1}{r}{102} &       & 1052.7 & 1065.8 & 510.4 &       & 1071.6 & 1776.6 & 1.8   & 0.5   &       & 1058.5 & 7.7   & 0.5   & -0.7 \\
      & \multicolumn{1}{r}{103} &       & 1117.8 & 1152.0 & 586.1 &       & 1071.6 & 1776.6 & -4.3  & -7.5  &       & 1058.5 & 7.7   & -5.6  & -8.8 \\
      & \multicolumn{1}{r}{104} &       & 1073.4 & 1088.8 & 591.6 &       & 1132.8 & 544.1 & 5.2   & 3.9   &       & 1119.4 & 7.1   & 4.1   & 2.7 \\
      & \multicolumn{1}{r}{105} &       & 1074.1 & 1115.2 & 793.0 &       & 1093.7 & 536.7 & 1.8   & -2.0  &       & 1060.6 & 7.4   & -1.3  & -5.1 \\
      & \multicolumn{1}{r}{106} &       & 1107.6 & 1132.5 & 604.1 &       & 1143.0 & 695.6 & 3.1   & 0.9   &       & 1138.1 & 7.0   & 2.7   & 0.5 \\
      & \multicolumn{1}{r}{107} &       & 1171.3 & 1200.9 & 795.6 &       & 1217.9 & 440.7 & 3.8   & 1.4   &       & 1217.9 & 7.4   & 3.8   & 1.4 \\
      & \multicolumn{1}{r}{108} &       & 1115.8 & 1132.4 & 693.5 &       & 1142.6 & 487.0 & 2.4   & 0.9   &       & 1126.9 & 8.9   & 1.0   & -0.5 \\
      & \multicolumn{1}{r}{109} &       & 1095.3 & 1112.3 & 646.1 &       & 1129.9 & 424.3 & 3.1   & 1.6   &       & 1121.5 & 10.1  & 2.3   & 0.8 \\
      & \multicolumn{1}{r}{110} &       & 1077.0 & 1093.2 & 616.2 &       & 1103.9 & 564.3 & 2.4   & 1.0   &       & 1103.2 & 9.3   & 2.4   & 0.9 \\
\midrule
\multirow{10}[2]{*}{250} & \multicolumn{1}{r}{111} &       & 1279.5 & 1284.1 & 1467.3 &       & 1279.9 & 3896.2 & 0.0   & -0.3  &       & 1270.8 & 43.6  & -0.7  & -1.0 \\
      & \multicolumn{1}{r}{112} &       & 1278.6 & 1300.1 & 1425.9 &       & 1300.1 & 3351.2 & 1.7   & 0.0   &       & 1264.3 & 35.5  & -1.1  & -2.8 \\
      & \multicolumn{1}{r}{113} &       & 1379.7 & 1396.1 & 1675.0 &       & 1405.4 & 2195.8 & 1.8   & 0.7   &       & 1389.2 & 37.5  & 0.7   & -0.5 \\
      & \multicolumn{1}{r}{114} &       & 1358.5 & 1363.9 & 1325.4 &       & 1369.0 & 2127.4 & 0.8   & 0.4   &       & 1364.0 & 41.6  & 0.4   & 0.0 \\
      & \multicolumn{1}{r}{115} &       & 1342.8 & 1353.3 & 1374.2 &       & 1335.8 & 2728.1 & -0.5  & -1.3  &       & 1328.9 & 47.3  & -1.0  & -1.8 \\
      & \multicolumn{1}{r}{116} &       & 1303.7 & 1351.4 & 1770.5 &       & 1358.2 & 2982.4 & 4.0   & 0.5   &       & 1335.4 & 45.0  & 2.4   & -1.2 \\
      & \multicolumn{1}{r}{117} &       & 1277.1 & 1287.5 & 1684.7 &       & 1295.4 & 2699.1 & 1.4   & 0.6   &       & 1291.2 & 65.2  & 1.1   & 0.3 \\
      & \multicolumn{1}{r}{118} &       & 1263.3 & 1272.6 & 1613.6 &       & 1277.5 & 4917.2 & 1.1   & 0.4   &       & 1271.3 & 120.2 & 0.6   & -0.1 \\
      & \multicolumn{1}{r}{119} &       & 1189.0 & 1199.6 & 1298.9 &       & 1194.7 & 2464.2 & 0.5   & -0.4  &       & 1190.6 & 33.8  & 0.1   & -0.8 \\
      & \multicolumn{1}{r}{120} &       & 1272.9 & 1303.5 & 1650.1 &       & 1313.4 & 2428.5 & 3.1   & 0.7   &       & 1312.3 & 31.6  & 3.0   & 0.7 \\
\midrule
\multicolumn{2}{c}{\textbf{Average}} &       &       &       &       &       &       &       & \textbf{3.9} & \textbf{2.2} &       &       &       & \textbf{2.7} & \textbf{0.9} \\
\bottomrule
\end{tabular}
}
\label{tab:TabBenchmarkAgatzKunduSC}%
\end{table}%

\begin{table}[htbp]
\centering
\caption{Results of the comparison of \ac{MD-SPP-H} and \ac{EP-All} and \ac{SPP-All} for various customers ranging from $n=50$ to $250$ and double-center customer distributions.}
\renewcommand{\arraystretch}{1.1}
    \scalebox{0.6}{
\begin{tabular}{ccrrrrrrrrrrrrrr}
\toprule
\multirow{2}[4]{*}{$n$} & \multirow{2}[4]{*}{ID} &       & \multicolumn{3}{c}{MD-SPP-H} &       & \multicolumn{4}{c}{EP-All}    &       & \multicolumn{4}{c}{SPP-All} \\
\cmidrule{4-6}\cmidrule{8-11}\cmidrule{13-16}      &       &       & \multicolumn{1}{c}{$z_\text{best}$} & \multicolumn{1}{c}{$z_\text{avg}$} & \multicolumn{1}{c}{Avg. runtime (s)} &       & \multicolumn{1}{c}{$z$} & \multicolumn{1}{c}{Runtime (s)} & \multicolumn{1}{c}{$\Delta_\text{best}$} & \multicolumn{1}{c}{$\Delta_\text{avg}$} &       & \multicolumn{1}{c}{$z$} & \multicolumn{1}{c}{Runtime (s)} & \multicolumn{1}{c}{$\Delta_\text{best}$} & \multicolumn{1}{c}{$\Delta_\text{avg}$} \\
\midrule
\multirow{10}[2]{*}{50} & \multicolumn{1}{r}{71} &       & 802.0 & 805.4 & 5.1   &       & 846.3 & 1.9   & 5.2   & 4.8   &       & 846.3 & 0.4   & 5.2   & 4.8 \\
      & \multicolumn{1}{r}{72} &       & 788.5 & 797.5 & 8.0   &       & 832.7 & 2.6   & 5.3   & 4.2   &       & 832.7 & 0.4   & 5.3   & 4.2 \\
      & \multicolumn{1}{r}{73} &       & 717.3 & 730.0 & 5.1   &       & 762.2 & 2.5   & 5.9   & 4.2   &       & 762.2 & 0.6   & 5.9   & 4.2 \\
      & \multicolumn{1}{r}{74} &       & 702.0 & 705.5 & 6.3   &       & 707.5 & 4.0   & 0.8   & 0.3   &       & 707.5 & 0.3   & 0.8   & 0.3 \\
      & \multicolumn{1}{r}{75} &       & 825.3 & 833.5 & 6.8   &       & 887.4 & 2.5   & 7.0   & 6.1   &       & 883.9 & 0.7   & 6.6   & 5.7 \\
      & \multicolumn{1}{r}{76} &       & 795.5 & 803.6 & 8.6   &       & 847.2 & 2.9   & 6.1   & 5.2   &       & 847.2 & 0.2   & 6.1   & 5.2 \\
      & \multicolumn{1}{r}{77} &       & 728.0 & 734.8 & 8.4   &       & 742.9 & 2.5   & 2.0   & 1.1   &       & 736.9 & 0.7   & 1.2   & 0.3 \\
      & \multicolumn{1}{r}{78} &       & 734.9 & 748.3 & 6.5   &       & 756.9 & 3.8   & 2.9   & 1.1   &       & 756.9 & 0.3   & 2.9   & 1.1 \\
      & \multicolumn{1}{r}{79} &       & 757.5 & 772.8 & 7.8   &       & 823.9 & 2.1   & 8.1   & 6.2   &       & 823.9 & 0.3   & 8.1   & 6.2 \\
      & \multicolumn{1}{r}{80} &       & 861.7 & 865.7 & 7.0   &       & 883.8 & 2.4   & 2.5   & 2.0   &       & 883.8 & 0.5   & 2.5   & 2.0 \\
\midrule
\multirow{10}[2]{*}{75} & \multicolumn{1}{r}{81} &       & 858.3 & 867.6 & 34.3  &       & 871.4 & 13.9  & 1.5   & 0.4   &       & 864.4 & 0.5   & 0.7   & -0.4 \\
      & \multicolumn{1}{r}{82} &       & 886.1 & 902.0 & 31.9  &       & 936.4 & 17.2  & 5.4   & 3.7   &       & 936.4 & 0.7   & 5.4   & 3.7 \\
      & \multicolumn{1}{r}{83} &       & 862.6 & 872.4 & 32.0  &       & 881.0 & 17.0  & 2.1   & 1.0   &       & 878.6 & 0.5   & 1.8   & 0.7 \\
      & \multicolumn{1}{r}{84} &       & 1115.1 & 1138.5 & 33.2  &       & 1220.5 & 13.3  & 8.6   & 6.7   &       & 1220.5 & 0.5   & 8.6   & 6.7 \\
      & \multicolumn{1}{r}{85} &       & 1054.6 & 1064.2 & 30.1  &       & 1088.8 & 14.6  & 3.1   & 2.3   &       & 1088.5 & 1.1   & 3.1   & 2.2 \\
      & \multicolumn{1}{r}{86} &       & 913.2 & 933.5 & 26.0  &       & 943.8 & 9.4   & 3.2   & 1.1   &       & 937.9 & 0.5   & 2.6   & 0.5 \\
      & \multicolumn{1}{r}{87} &       & 941.7 & 947.2 & 32.2  &       & 963.0 & 12.8  & 2.2   & 1.6   &       & 963.0 & 0.9   & 2.2   & 1.6 \\
      & \multicolumn{1}{r}{88} &       & 1069.0 & 1101.5 & 27.0  &       & 1122.5 & 17.1  & 4.8   & 1.9   &       & 1119.3 & 0.7   & 4.5   & 1.6 \\
      & \multicolumn{1}{r}{89} &       & 908.5 & 931.5 & 38.0  &       & 1017.3 & 11.6  & 10.7  & 8.4   &       & 1017.3 & 1.3   & 10.7  & 8.4 \\
      & \multicolumn{1}{r}{90} &       & 836.5 & 848.5 & 30.6  &       & 865.2 & 11.0  & 3.3   & 1.9   &       & 865.2 & 0.5   & 3.3   & 1.9 \\
\midrule
\multirow{10}[2]{*}{100} & \multicolumn{1}{r}{91} &       & 1042.2 & 1055.0 & 77.6  &       & 1080.7 & 50.8  & 3.6   & 2.4   &       & 1073.6 & 2.9   & 2.9   & 1.7 \\
      & \multicolumn{1}{r}{92} &       & 988.9 & 1009.7 & 99.5  &       & 1028.4 & 40.0  & 3.8   & 1.8   &       & 1018.7 & 1.4   & 2.9   & 0.9 \\
      & \multicolumn{1}{r}{93} &       & 990.9 & 1005.9 & 99.6  &       & 1052.5 & 27.0  & 5.9   & 4.4   &       & 1049.9 & 2.1   & 5.6   & 4.2 \\
      & \multicolumn{1}{r}{94} &       & 1056.3 & 1077.0 & 100.3 &       & 1076.6 & 55.5  & 1.9   & 0.0   &       & 1074.0 & 1.4   & 1.7   & -0.3 \\
      & \multicolumn{1}{r}{95} &       & 1091.8 & 1115.7 & 80.4  &       & 1132.5 & 36.8  & 3.6   & 1.5   &       & 1130.8 & 4.1   & 3.5   & 1.3 \\
      & \multicolumn{1}{r}{96} &       & 1149.4 & 1174.8 & 108.2 &       & 1237.7 & 44.7  & 7.1   & 5.1   &       & 1182.8 & 2.9   & 2.8   & 0.7 \\
      & \multicolumn{1}{r}{97} &       & 1214.6 & 1237.0 & 91.8  &       & 1259.9 & 39.6  & 3.6   & 1.8   &       & 1259.9 & 1.4   & 3.6   & 1.8 \\
      & \multicolumn{1}{r}{98} &       & 1118.8 & 1149.0 & 81.2  &       & 1164.9 & 47.1  & 4.0   & 1.4   &       & 1160.8 & 1.4   & 3.6   & 1.0 \\
      & \multicolumn{1}{r}{99} &       & 1003.0 & 1015.1 & 52.8  &       & 1028.5 & 35.4  & 2.5   & 1.3   &       & 1025.4 & 2.1   & 2.2   & 1.0 \\
      & \multicolumn{1}{r}{100} &       & 1122.9 & 1138.0 & 80.3  &       & 1137.0 & 52.5  & 1.2   & -0.1  &       & 1113.6 & 1.4   & -0.8  & -2.2 \\
\midrule
\multirow{10}[2]{*}{175} & \multicolumn{1}{r}{101} &       & 1269.1 & 1307.3 & 658.7 &       & 1344.9 & 640.0 & 5.6   & 2.8   &       & 1344.3 & 22.0  & 5.6   & 2.7 \\
      & \multicolumn{1}{r}{102} &       & 1491.4 & 1526.0 & 433.0 &       & 1520.6 & 639.1 & 1.9   & -0.4  &       & 1502.2 & 15.1  & 0.7   & -1.6 \\
      & \multicolumn{1}{r}{103} &       & 1524.3 & 1544.4 & 854.1 &       & 1596.2 & 710.2 & 4.5   & 3.2   &       & 1592.9 & 26.2  & 4.3   & 3.0 \\
      & \multicolumn{1}{r}{104} &       & 1397.0 & 1443.7 & 780.4 &       & 1455.0 & 531.0 & 4.0   & 0.8   &       & 1445.4 & 21.1  & 3.3   & 0.1 \\
      & \multicolumn{1}{r}{105} &       & 1543.5 & 1572.0 & 695.0 &       & 1604.7 & 615.6 & 3.8   & 2.0   &       & 1602.6 & 15.2  & 3.7   & 1.9 \\
      & \multicolumn{1}{r}{106} &       & 1549.1 & 1573.3 & 758.1 &       & 1597.6 & 748.8 & 3.0   & 1.5   &       & 1597.1 & 14.3  & 3.0   & 1.5 \\
      & \multicolumn{1}{r}{107} &       & 1389.7 & 1405.2 & 676.4 &       & 1430.2 & 501.2 & 2.8   & 1.7   &       & 1426.3 & 28.5  & 2.6   & 1.5 \\
      & \multicolumn{1}{r}{108} &       & 1511.5 & 1545.6 & 688.6 &       & 1555.5 & 541.4 & 2.8   & 0.6   &       & 1548.1 & 31.1  & 2.4   & 0.2 \\
      & \multicolumn{1}{r}{109} &       & 1447.5 & 1522.3 & 699.4 &       & 1518.1 & 620.6 & 4.7   & -0.3  &       & 1516.2 & 14.6  & 4.5   & -0.4 \\
      & \multicolumn{1}{r}{110} &       & 1541.1 & 1567.8 & 903.3 &       & 1602.0 & 481.1 & 3.8   & 2.1   &       & 1583.3 & 16.9  & 2.7   & 1.0 \\
\midrule
\multirow{10}[2]{*}{250} & \multicolumn{1}{r}{111} &       & 1762.6 & 1779.9 & 1796.4 &       & 1774.7 & 2834.2 & 0.7   & -0.3  &       & 1764.5 & 68.1  & 0.1   & -0.9 \\
      & \multicolumn{1}{r}{112} &       & 1807.6 & 1829.3 & 1402.3 &       & 1819.3 & 3177.3 & 0.6   & -0.5  &       & 1802.9 & 124.0 & -0.3  & -1.5 \\
      & \multicolumn{1}{r}{113} &       & 1706.3 & 1730.3 & 1631.9 &       & 1747.2 & 2436.7 & 2.3   & 1.0   &       & 1728.6 & 69.2  & 1.3   & -0.1 \\
      & \multicolumn{1}{r}{114} &       & 1737.6 & 1752.1 & 1662.3 &       & 1772.8 & 2811.4 & 2.0   & 1.2   &       & 1758.5 & 170.4 & 1.2   & 0.4 \\
      & \multicolumn{1}{r}{115} &       & 1869.5 & 1897.6 & 1708.0 &       & 1973.5 & 3486.4 & 5.3   & 3.8   &       & 1949.4 & 127.6 & 4.1   & 2.7 \\
      & \multicolumn{1}{r}{116} &       & 1807.1 & 1832.6 & 1333.7 &       & 1808.4 & 2557.6 & 0.1   & -1.3  &       & 1808.0 & 65.5  & 0.1   & -1.4 \\
      & \multicolumn{1}{r}{117} &       & 1817.4 & 1836.3 & 1420.0 &       & 1831.7 & 2575.2 & 0.8   & -0.3  &       & 1821.2 & 94.4  & 0.2   & -0.8 \\
      & \multicolumn{1}{r}{118} &       & 1813.8 & 1823.2 & 1354.1 &       & 1841.1 & 2168.5 & 1.5   & 1.0   &       & 1814.2 & 63.0  & 0.0   & -0.5 \\
      & \multicolumn{1}{r}{119} &       & 1820.4 & 1851.3 & 1597.4 &       & 1823.4 & 4099.7 & 0.2   & -1.5  &       & 1805.1 & 69.1  & -0.8  & -2.6 \\
      & \multicolumn{1}{r}{120} &       & 1761.9 & 1787.4 & 1680.2 &       & 1827.0 & 2910.7 & 3.6   & 2.2   &       & 1790.2 & 101.8 & 1.6   & 0.2 \\
\midrule
\multicolumn{2}{c}{\textbf{Average}} &       &       &       &       &       &       &       & \textbf{3.6} & \textbf{2.1} &       &       &       & \textbf{3.1} & \textbf{1.5} \\
\bottomrule
\end{tabular}%
}
\label{tab:TabBenchmarkAgatzKunduDC}%
\end{table}%

\newpage
\section{Impact of Individual Parameters and Their Interaction on Time Savings and Delivery System Performance} \label{app:MI}

In the following, we present the supplementary results of the full factorial experiment we conducted. The results are sorted based on the customer distribution: uniform, single- and double-center. If not included in the main manuscript, we present plots summarizing the observed percentage time savings, the maximum drone flight duration of a single flight, and the total drone flight duration. We average the results over ten instances for each scenario.

\begin{figure}[H]
    \RawFloats
    \centering
    \includegraphics{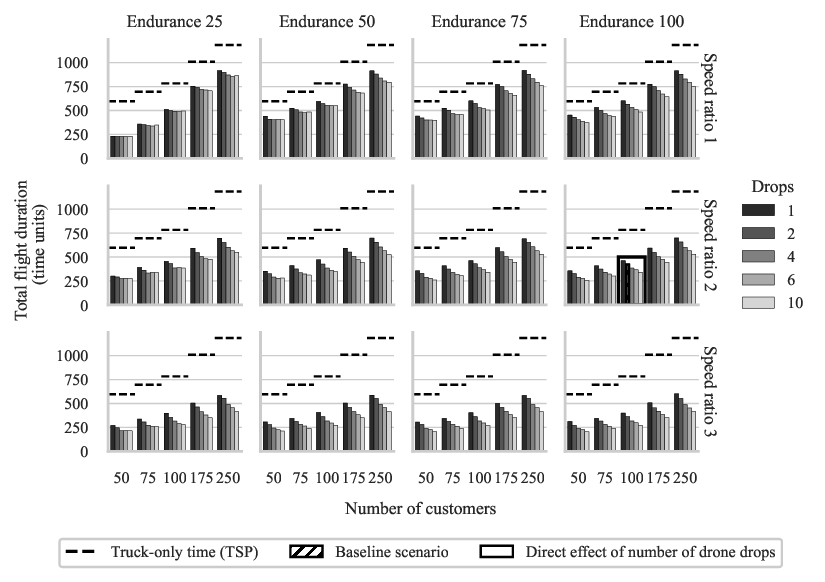}
    \caption{Comparison of average observed total drone flight duration (i.e., sum over all drone flights) over ten instances and truck-only delivery system completion time for instances with uniform customer distribution}  
    \label{fig:FigMIUniformTotalFlightDuration}
\end{figure}

\begin{figure}[H]
    \RawFloats
    \centering
    \includegraphics{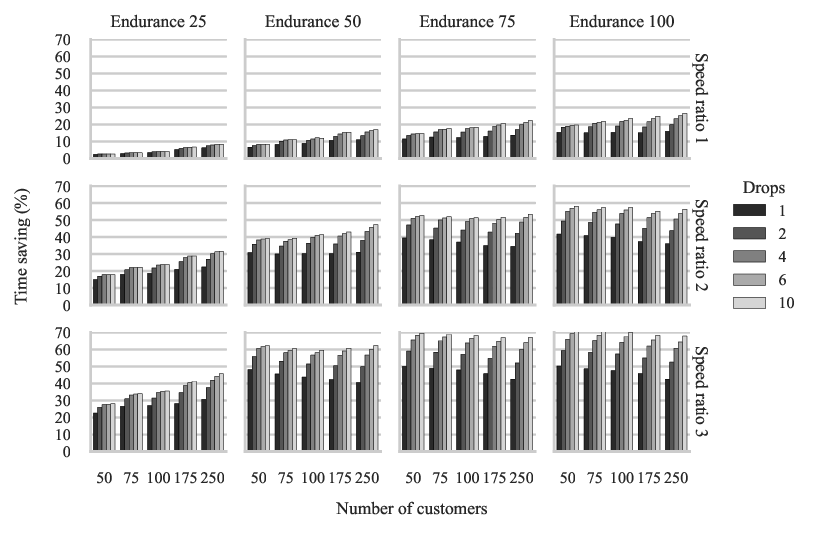}
    \caption{Average percentage time savings over ten problem instances from a truck-and-drone system compared to the truck-only alternative for varying parameter settings and single-center customer distribution.}
    \label{fig:FigMISingleTimeRatio}
\end{figure}

\begin{figure}[H]
    \RawFloats
    \centering
    \includegraphics{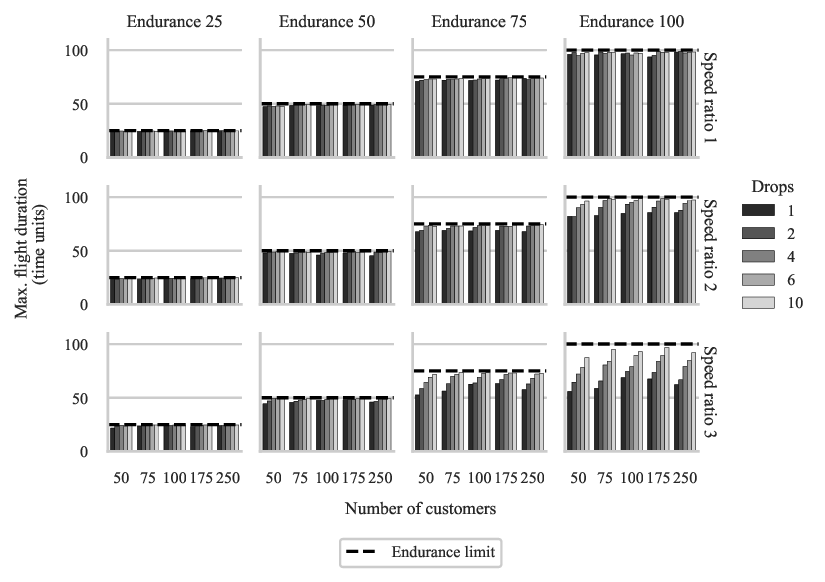}
    \caption{Comparison of average observed maximum drone flight duration over ten instances and imposed drone flight endurance limit for instances with single-center customer distribution}  
    \label{fig:FigMISingleMaxFlightDuration}
\end{figure}

\begin{figure}[H]
    \RawFloats
    \centering
    \includegraphics{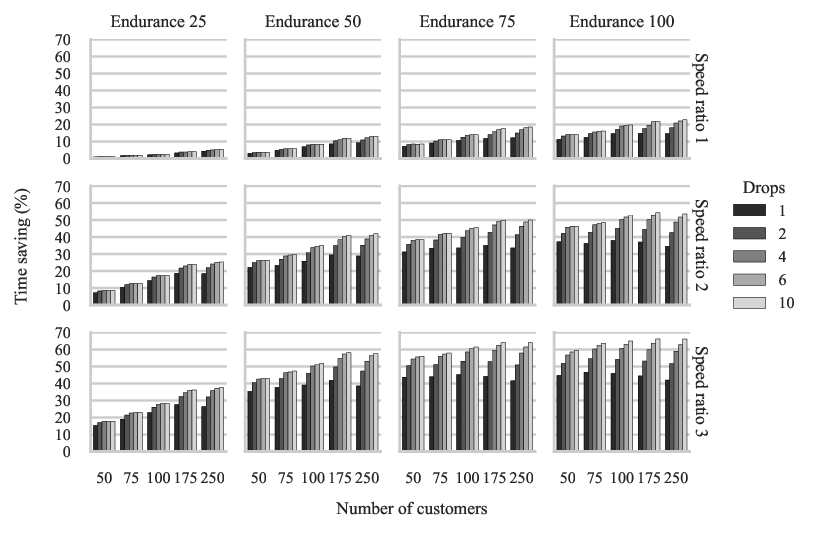}
    \caption{Average percentage time savings over ten problem instances from a truck-and-drone system compared to the truck-only alternative for varying parameter settings and double-center customer distribution.}  
    \label{fig:FigMIDoubleTimeRatio}
\end{figure}

\begin{figure}[H]
    \RawFloats
    \centering
    \includegraphics{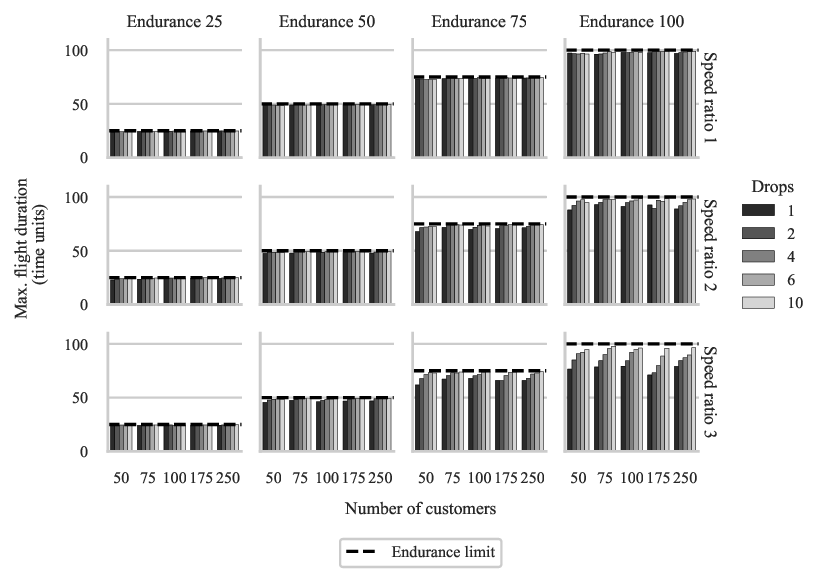}
    \caption{Comparison of average observed maximum drone flight duration over ten instances and imposed drone flight endurance limit for instances with double-center customer distribution}  
    \label{fig:FigMIDoubleMaxFlightDuration}
\end{figure}

\section{Experiments on Real-World Instances}
\label{app:realworld}

To test the applicability of the \ac{MD-SPP-H} to real-world delivery scenarios, we solve \ac{FSTSP-MD} instances generated based on the routing dataset of the \emph{2021 Amazon Last Mile Routing Research Challenge} \citep{Merchan20242021Set}. These instances feature real-world customer locations served in last-mile delivery routes across five major US cities. For our experiment, we randomly select 80 routes, with the number of customer locations per delivery route ranging from 60 to 215. Truck travel times are given as the average time needed between locations, as recorded in the dataset. The drone flight endurance and speed are set to 1800 seconds and 10 m/s, respectively, based on the real-world case study by \cite{Kang2021AnProblem}. The drone travels on Euclidean distance. We vary the number of possible drone drops from 1 to 10, assuming all customers are drone-eligible and have unit demand. For service times at customer locations, we use two settings: first, assuming no service times for both trucks and drones to compare with results from the stylized experiments presented in Section \ref{sec:managerial_insights}; and second, adopting the service times from \cite{Kang2021AnProblem} of 60 seconds for trucks and 30 seconds for drones. We summarize the experiment setting in Table \ref{tab:ExpSetting_Amazon}. In total our experiment design gives rise to 400 routing instances with and without service times. We provide the problem instances and detailed results in our GitHub repository at \url{https://github.com/schasarah/FSTSP-MD-Experiment-Results.git}.

\begin{table}[H]
    \centering
    \renewcommand{\arraystretch}{1.1}
    \caption{Summary of parameter choices for numerical experiments on real-world instances based on the dataset provided by \cite{Merchan20242021Set}.}
    \scalebox{0.8}{
    \begin{tabular}{lll}
    \toprule
    Type  & Parameter & Value \\
    \midrule
    \multirow{3}[1]{*}{General} 
          & Total number of instances  & 80 \\
          & Customer distribution & Based on real-world Amazon routes\\
          & Number of customers  & From 60 to 215 customers   \\
    \midrule
    \multirow{8}[1]{*}{Vehicle} & Drone eligibility  & 100\% \\
          & Truck travel time  &  Based on recorded avg. travel times \\
          & Drone travel distance &  Euclidean [m] \\
          & Drone speed &  10 [m/s] \\
          & Flight endurance &   1800 [s] \\
          & Number of drops  &   $\{1,2,4,6,10\}$  \\
          & Service time truck/drone  &  0s/0s and 60s/30s \\
    \midrule
    \multirow{2}[1]{*}{CPU} & Run-time limit  &  10 min \\
          & Runs  & 1 \\
    \bottomrule
    \end{tabular}%
    }
  \label{tab:ExpSetting_Amazon}%
\end{table}%

In the real-world instances, the average speed ratio between the truck and the drone across the 80 route instances is 2.14. Table \ref{tab:real-world_completion_time_savings} compares the average percentage completion time savings of the real-world instances with the average savings achieved for the uniform customer distribution in the stylized dataset of \cite{Agatz2018OptimizationDrone} with drone speed ratios of 2 and 3. The completion time savings for the real-world instances without additional service times align closely with those based on the stylized dataset without service times and similar speed ratios. We observe even slightly stronger diminishing marginal returns for an increasing number of drops for real-world instances. When additional service times are included, we observe even higher savings. 

\begin{table}[ht]
  \centering
  \renewcommand{\arraystretch}{1.2}
  \caption{Comparison of completion time savings for the real-world instances and the stylized datasets.}
  \scalebox{0.85}{
    \begin{tabular}{crrrrr}
    \toprule
    \multirow{2}[3]{*}{Drops} & \multicolumn{2}{c}{Real-world (speed ratio 2.14)} & \multicolumn{1}{c}{Stylized uniform (speed ratio 2)} & \multicolumn{1}{c}{Stylized uniform (speed ratio 3)} \\
    \cmidrule(lr){2-3} \cmidrule(lr){4-5}
          & \multicolumn{1}{c}{No service time} & \multicolumn{1}{c}{Service time} & \multicolumn{1}{c}{No service time} & \multicolumn{1}{c}{No service time} \\
    \midrule
    \multicolumn{1}{r}{1} & 37.04 & 45.26 & 33.11 & 39.56 \\
    \multicolumn{1}{r}{2} & 43.09 & 53.05 & 41.32 & 48.80 \\
    \multicolumn{1}{r}{4} & 47.90 & 56.65 & 48.71 & 56.83 \\
    \multicolumn{1}{r}{6} & 50.20 & 57.76 & 52.18 & 60.73 \\
    \multicolumn{1}{r}{10} & 52.18 & 58.78 & 55.76 & 64.71 \\
    \bottomrule
    \end{tabular}%
    }
  \label{tab:real-world_completion_time_savings}
\end{table}

\end{appendix}
\end{document}